\documentclass[ijoc,sglanonrev]{informs5}
\RequirePackage{tgtermes}
\RequirePackage{newtxtext}
\RequirePackage{newtxmath}
\RequirePackage{bm}
\RequirePackage{endnotes}

\OneAndAHalfSpacedXII 

\usepackage[ruled,linesnumbered]{algorithm2e}
\usepackage{tikz}

\usepackage{natbib}
 \bibpunct[, ]{(}{)}{,}{a}{}{,}%
 \def\bibfont{\small}%
 \def\BIBand{and}%

\usepackage{booktabs}
\usepackage{makecell}
\usepackage{hyperref}
\usepackage[para]{threeparttable}
\usepackage{mathtools}
\usepackage{bbm}
\usepackage{enumitem}
\usepackage{multirow}
\usepackage[caption=false]{subfig}

\EquationsNumberedThrough    

\TheoremsNumberedThrough     
\ECRepeatTheorems  %

\MANUSCRIPTNO{IJOC-0001-2026.00}

\DeclarePairedDelimiterX\Set[2]{\lbrace}{\rbrace}{ #1 \,\delimsize| \,\mathopen{} #2 }

\newcommand{\ep}[2]{\mathbb{E}_{#1} \left[ #2 \right]}

\newcommand{\bs}[1]{\boldsymbol{#1}} 
\newcommand{\Bs}[1]{\mathbb{#1}} 
\newcommand{\Cs}[1]{\mathcal{#1}} 
\newcommand{\Fs}[1]{\mathfrak{#1}} 

\newcommand{\ul}[1]{\underline{#1}}
\newcommand{\ol}[1]{\overline{#1}}

\newcommand{\conv}[1]{\text{conv}\left(#1\right)}

\newcommand{\st}{\text{s.t.}}

\begin{document}


\RUNAUTHOR{Rahimian and Mehrotra}

\RUNTITLE{Incorporating Decision Dependence in Distributional Ambiguity}

\TITLE{On the Relevance of Incorporating Decision Dependence in Distributional Ambiguity}
\ARTICLEAUTHORS{%
\AUTHOR{Hamed Rahimian}
\AFF{Department of Industrial Engineering, Clemson University, Clemson SC 29634, USA  \EMAIL{hrahimi@clemson.edu}}

\AUTHOR{Sanjay Mehrotra}
\AFF{Department of Industrial Engineering and Management Sciences, Northwestern University, Evanston IL 60208, USA  
\EMAIL{mehrotra@northwestern.edu}}
} 

\ABSTRACT{%
Most existing studies on distributionally robust optimization (DRO) with a decision-dependent ambiguity set focus on the computational and theoretical challenges posed by this class of problems. In this paper, we adopt a combined modeling and computational perspective to understand the trade-offs between modeling fidelity, solution quality, and computational effort, particularly in comparison with DRO models using decision-independent ambiguity sets.
Motivated by representative applications in joint pricing-stocking newsvendor problems with price-dependent demand and facility location problems with location-dependent demand, we consider a two-stage stochastic mixed-integer program with (non)convex continuous recourse. 
Assuming a finite sample space, we model the decision-dependent distributional ambiguity with a polyhedral ambiguity set and reformulate the problem as a nonconvex mixed-integer nonlinear program. 
To efficiently solve the reformulations, we propose 
decomposition-based cutting-plane algorithms. 
Our experiments on benchmark instances indicate that a decision-dependent ambiguity set substantially reduces the postdecision disappointment for the newsvendor and out-of-sample cost for the facility location problems, up to 65\% and 7\% on average, respectively;   
thereby mitigating the optimizer's curse relative to a decision-independent DRO. However, this improvement is achieved at the expense of increased computational effort, with the absolute runtime ratio falling between 2 and 25 for medium- to large-sized newsvendor instances and between 2 and 12 for facility location instances. 


}%

\FUNDING{The authors gratefully acknowledge the support of the Office of Naval Research through grant N00014-18-1-2097-P00001. The first author gratefully acknowledges the support of the U.S. Air Force Office of Scientific Research through grant FA9550-24-1-0241.}


\KEYWORDS{Distributionally robust optimization, Decision-dependent uncertainty, Adaptive ambiguity, Nonconvex optimization, Cutting planes.} 

\maketitle



\section{Introduction}

Distributionally robust optimization (DRO) \citep{rahimian2022frameworks} has attracted significant attention as a means of balancing the conservatism inherent in robust optimization (RO) 
against the optimizer's curse associated with stochastic programming (SP). 
Despite substantial theoretical and algorithmic advances in DRO, most research often assumes (i) uncertain parameters (or their distributions) are known a priori or belong to a set and (ii) are {\it exogenous}. 
However, these assumptions rarely hold in practice, and 
uncertain parameters may be endogenously/adapatively affected by the decisions \citep{luo2020distributionally}.

To capture this interplay, a growing line of research has studied the framework of {\it decision-dependent} DRO. 
Recent studies in this area have primarily focused on addressing the computational and theoretical challenges posed by this class of problems, seeking to deepen understanding of their structure and solution methodologies. However, several critical questions remain largely unanswered in the literature. Is decision-dependency merely a modeling preference, or is it an essential requirement? Can one safely ignore decision-dependency and still obtain high-quality solutions, or does such misspecification inevitably lead to substantively suboptimal decisions? And, importantly, what is the trade-off between computational burden and solution quality?

In this paper, perhaps for the first time, we investigate decision-dependent DRO models from a combined modeling and computational perspective. Through a comparison with standard DROs, we aim to elucidate how the incorporation of adaptive ambiguity influences the performance of resulting solutions, as well as the computational burden to obtain them.

\subsection{Representative Applications}
To reach our goals, we investigate two representative applications:
(i) a joint pricing-stocking problem in a multiproduct newsvendor setting with price-dependent demand and
(ii) a facility location problem with location-dependent demand.

\subsubsection{Multiproduct Newsvendor Problem}
\label{sec: NV_app}
A substantial body of research on stochastic pricing–stocking problems highlights the importance of understanding how pricing and inventory decisions interact under demand uncertainty, see, e.g., \citet{petruzzi1999pricing}.
Collectively, this literature shows that in many newsvendor-type problems, demand is endogenously influenced by pricing decisions, motivating models that explicitly incorporate price-dependent demand when determining order quantities and price.
Operational models typically embed price elasticity of demand into the classical stochastic newsvendor problem by modeling demand as additive or multiplicative functions of price, see, e.g., \citet{lee2012newsvendor,kocabiyikouglu2011elasticity} and references therein, or by using power- or logit-based market share formulations grounded in modern choice theory \citep{mcfadden1981econometric} and attraction models \citep{huff1963probabilistic,huff1964defining}.

We study stocking-pricing decisions for a multiproduct newsvendor problem under price-dependent demand, where the distributional ambiguity is inspired by an attraction model that accounts for price levels and substitution effects. 
Let $n$ denote the number of products.
For each $i \in [n]$, suppose that $c_{i}$ denotes the per unit purchasing cost, $g_{i}$ denotes the per unit salvage price, and $b_{i}$ denotes the per unit back-order cost. 
For a fixed order quantity $q_{i} \in \Bs{R}$, a fixed per unit selling price $r_{i} \in \Bs{R}$, and demand $\xi_{i} \in \Bs{R}$, cost function $h_{i}(q_{i}, r_i, \xi_{i})$ is defined as: 
\begin{align*}
    h_{i}(q_{i}, r_i, \xi_{i}) & =c_{i}q_{i}-r_{i}\min\{q_{i}, \xi_{i}\} -g_{i}(q_{i}-\xi_{i})_{+}+b_{i}(\xi_{i}-q_{i})_{+} \\
    & = (c_{i} - r_{i})q_{i} + (r_{i}-g_{i})(q_{i}-\xi_{i})_{+}+b_{i}(\xi_{i}-q_{i})_{+},
\end{align*}
where $(a)_{+}$ denotes $\max\{0, a\}$. 
Equivalently, we have 
\begin{equation}
    \label{eq: NV_recourse}
         h_{i}(q_{i}, r_i, \xi_{i}) \! = (c_{i} - r_{i})q_{i} + \! \min_{y_{i}^+, y_i^- \ge 0} \{    (r_{i}-g_{i}) y_i^+ + b_{i} y_i^-  : y_i^+ - y_i^- = q_i - \xi_i\}. 
\end{equation}
Let $h(\bs{q}, \bs{r}, \bs{\xi})= \sum_{i \in [n]} h_{i}(q_{i}, r_i, \xi_{i})$. 
Moreover, define the mixed-integer set $\Cs{X}=\{(\bs{q},\bs{r}): \sum_{i \in [n]} c_i q_i \le d, \; q_i \ge 0, \; q_i \in \Bs{Z}, \; \underline{r}_i \le r_i \le \overline{r}_i, \; i \in [n]\}$, for some $d, \underline{r}_i, \overline{r}_i \in \Bs{R}$, $i \in [n]$. 
Suppose that $\bs{\xi}$ has a finite support $\Xi=\{\bs{\xi}_{\omega}\}_{\omega=1}^{N}$, where $N$ is the number of scenarios.
We then formulate a decision-dependent DRO problem $\min\limits_{(\bs{q},\bs{r}) \in \Cs{X} } \ \max\limits_{\bs{p} \in \Cs{P}(\bs{r})} \ 	\ep{\bs{p}}{h(\bs{q},\bs{r}, \bs{\xi})}$. The price-dependent ambiguity set $\Cs{P}(\bs{r})$ is defined as: 
\begin{equation}
\label{eq: moment_NV}
    \Cs{P}(\bs{r}) = 
    \left\lbrace \bs{p} \ge \bs{0} \middle\vert
    \begin{array}{l}
         (1- \tau_\mu) \bs{\mu}_{0}(\bs{r}) \le  \sum\limits_{\omega \in [N]} p_{\omega} \bs{\xi}_{\omega}    \le (1+\tau_\mu)\bs{\mu}_{0}(\bs{r}), \\  
        \underline{\tau}_\sigma \big(\bs{\sigma}_{0}(\bs{r})^2 + \bs{\mu}_{0}(\bs{r})^2\big) \le \sum\limits_{\omega \in [N]} p_{\omega} \bs{\xi}_{\omega}^{2}\le \overline{\tau}_\sigma \big(\bs{\sigma}_{0}(\bs{r})^2 + \bs{\mu}_{0}(\bs{r})^2\big), \\
        \sum\limits_{\omega \in [N]} p_{\omega} = 1
    \end{array}
    \right\rbrace,
\end{equation}
where $\bs{\mu}_{0}(\bs{r})$ and $\bs{\sigma}_{0}(\bs{r})$ denote the vector of (nominal) price-dependent mean and standard deviation of the random demand $\bs{\xi}$. Moreover, parameters $\tau_\mu$, $\underline{\tau}_\sigma$, and $\overline{\tau}_\sigma$ control the conservatism of resulting robust decisions by adjusting the maximum allowable deviations on the nominal first- and second-order moments, with $0\le \underline{\tau}_\sigma \le 1 \le \overline{\tau}_\sigma$. 
We let  
\begin{equation}
    \mu_{0,j}(\bs{r}) = \overline{\mu}_{j} \big( 1 + \sum_{i \in [n]} u_{i j}^{\mu} r_i \big), 
    \quad 
    \sigma_{0,j}(\bs{r})^2 = \overline{\sigma}_{j}^2 \big( 1 - \sum_{i \in [n]} u_{i j}^{\sigma} r_i \big), 
    \label{NV_moments} \\
\end{equation}
for $j \in [n]$. Here, $\overline{\bs{\mu}}$ and $\overline{\bs{\sigma}}$ indicate the vector of empirical mean and standard deviation of the random demand $\bs{\xi}$. We assume that $u_{i i}^{\mu} < 0 $, $i \in [n]$, implying that an increase in the price of product $i$ leads to a decrease in the average demand for product $i$, $\xi_i$.  Moreover, for $i \in [n]$, nonnegative $u_{i j}^{\mu}$ and $u_{i j}^{\sigma}$, $ i \neq j \in [n]$, capture the impact of other products' price on the mean and standard deviation of the demand for product $i$, $\xi_i$; emphasizing that the products may be substitutable. 
In our numerical experiments in Section \ref{sec: numerical}, these impact parameters are a function of {\it similarity} between products. 
When $u_{i j}^{\mu}=u_{i j}^{\sigma}=0$ for all $i,j \in [n]$, the constraints yield a price-independent ambiguity set. 

\subsubsection{Facility Location Problem}
\label{sec: FL_app}
When new services enter a market, the placement of service facilities can directly influence customers' willingness to adopt the service. 
Empirical studies indicate that customer behavior 
is influenced by accessibility (e.g., travel distance) and service attractiveness (e.g., size, variety) of competing facilities, all of which affect both demand volume and its distribution. 
This phenomenon is well-documented in modern discrete choice theory \citep{mcfadden1981econometric} and in classical location science. Most notably, Huff's probabilistic attraction model \citep{huff1963probabilistic,huff1964defining}, formalizes how the perceived utility of a facility is governed by its attractiveness, the relative attractiveness of alternatives, and a distance-decay effect, extending the classical Reilly's law of retail gravitation \citep{reilly1931retail}. These studies highlight that, in many facility location problems, demand endogenously depends on facility placement, motivating models that explicitly incorporate this dependence.

We study a facility location problem under random location-dependent demand, where the distributional ambiguity is inspired by an attraction model that accounts for distance from open facilities. Consider $n$ potential locations for installing facilities and $m$ customers. For each $i \in [n]$, let $C_i$ denote the capacity. 
For each $j \in [m]$, let $r_j$ and $g_j$ denote the unit revenue and penalty cost for unsatisfied demand, respectively. Moreover, for $i \in [n]$ and $j \in [m]$, let $c_{ij}$ denote the unit transportation cost from location $i$ to customer $j$. 
For a fixed location vector $\bs{x} \in \{0,1\}^n$ and demand $\xi_j \in \Bs{R}$, cost function $h_j(\bs{x}, \xi_{j})$ is defined as in \citep{basciftci2021distributionally}: 
\begin{equation}
    \label{eq: FL_recourse}
    \begin{aligned}
         h_j(\bs{x}, \xi_{j}) = g_j  \xi_j + \min \Big\{ \sum_{i \in [n]}(c_{ij}-r_j-g_j) y_{ij} 
        :\sum_{i \in [n]} y_{ij} \le  \xi_j, \;
        y_{ij} \leq C_i x_i\;
        y_{ij} \geq 0, \ i \in [n]\Big\}, 
    \end{aligned}
\end{equation}
where $y_{ij}$ denote the transported units from location $i \in [n]$ to customer $j \in [m]$. 
Let $h(\bs{x}, \bs{\xi})= \sum_{j \in [m]} h_{j}(\bs{x}, \xi_{j})$. 
Moreover, define the binary set $\Cs{X}=\{\bs{x} \in \{0,1\}^n : \sum_{i \in [n]} x_i = d\}$, for some $d \in \{0\} \cup [n]$. 
Suppose that $\bs{\xi}$ has a finite support $\Xi=\{\bs{\xi}_{\omega}\}_{\omega=1}^{N}$, where $N$ is the number of scenarios.
We then formulate a decision-dependent DRO problem $\min\limits_{\bs{x} \in \Cs{X} } \ \max\limits_{\bs{p} \in \Cs{P}(\bs{x}) } \ 	\ep{\bs{p}}{h(\bs{x},\bs{\xi})}$. The location-dependent ambiguity set $\Cs{P}(\bs{x})$ is defined similarly to \eqref{eq: moment_NV}, with $\bs{x}$ replacing $\bs{r}$. 
We let  
\begin{equation}
    \mu_{0,j}(\bs{x}) = \overline{\mu}_{j} \big( 1 + \sum_{i \in [n]} u_{i j}^{\mu} x_i \big),  \quad 
    \sigma_{0,j}(\bs{x})^2 = \overline{\sigma}_{j}^2  \big( 1 - \sum_{i \in [n]} u_{i j}^{\sigma} x_i \big), \label{FL_moments} 
\end{equation}
for $i \in [n]$, with 
$\overline{\bs{\mu}}$ and $\overline{\bs{\sigma}}$ 
defined as before.
For $j \in [m]$, nonnegative $u_{i j}^{\mu}$ and $u_{i j}^{\sigma}$, $i \in [n]$, capture the impact of opening a facility at location $i \in [n]$ on the mean and standard deviation of the demand at customer $j$, $\xi_j$. In our numerical experiments in Section \ref{sec: FL}, these impact parameters are a function of distance. 
When $u_{i j}^{\mu}=u_{i j}^{\sigma}=0$ for all $i \in [n]$ and $j \in [m]$, the constraints yield a location-independent ambiguity set. 

\subsection{Contributions}

To understand the intrinsic balance between modeling richness, solution quality, and computational efficiency that characterizes the decision-dependent DRO, we consider a  two-stage stochastic mixed-integer program (MIP) with (non)convex continuous recourse.  Assuming a finite sample space, the distributional ambiguity is modeled using a general polyhedral set. 
Here is a list of our contributions. 

\begin{itemize}[leftmargin=*]


    \item We reformulate the resulting DRO as a nonconvex two-stage stochastic mixed-integer nonlinear program (MINLP). 
    To efficiently solve the nonconvex two-stage reformulation, we develop a decomposition-based cutting-plane algorithm using disjunctive cuts derived from the Lagrangian dual of the recourse and establish its finite convergence. 

\item The disjunctive cutting-plane algorithm may be of independent interest, as it applies to solve a  general two-stage SP with nonconvex recourse when the cost vector---or equivalently, the recourse matrix---in the second-stage problem is simultaneously (linearly) bi-parameterized by the mixed-integer first-stage decision variables and uncertain parameters    (see Remark \ref{rem: two-stage}).   

    \item Our experiments on benchmark instances indicate that solving a decision-dependent DRO problem is generally more challenging than its decision-independent counterpart.
    The absolute runtime ratio falls between 2 and 25 for the medium- to large-sized newsvendor instances, whereas it falls between 2 and 12 for the facility location instances. Nevertheless, decision-dependent ambiguity sets produce solutions with superior out-of-sample performance (up to 11\% and 7\% on average, respectively, for the newsvendor and facility location problems), 
    while they tend to exhibit higher in-sample costs. In other words, incorporating decision-dependent ambiguity mitigates the optimizer's curse relative to decision-independent DRO, up to 65\%. 
\end{itemize}


\subsection{Literature Review}
\subsubsection{Decision-Dependent Uncertainty} There is growing interest in decision problems with endogenous uncertainty in the context of SP \citep{dentcheva2020risk,drusvyatskiy2023stochastic,bazotte2026,pantuso2025shaped}, RO \citep{chen2023robust,vayanos2020robust}, and DRO  \citep{noyan2018distributionally}, with applications including pricing \citep{hu2019data,qu2025decision} 
and  
facility location \citep{basciftci2021distributionally,luo2025two}. 
Following the earlier work of \citep{dupacova2006optimization}, two distinct classes of endogenous uncertainty were identified in \citep{hellemo2018decision}, depending on whether the decisions affect the {\it temporal revelation of uncertainty or its probability distribution}---with the possibility of affecting the set of possible outcomes in both classes \citep{nohadani2018optimization}. 
In this work, we are interested in the impact of decisions on probability distributions; hence, we limit our literature review to this class of problems.  

Unlike models with temporal endogenous uncertainty, the literature on models where decisions affect the probability distribution is sparse. 
Various discrepancy-based DRO  approaches were developed in  \citep{luo2020distributionally}, where the maximum allowed distance from a nominal distribution is assumed to be decision-dependent. \citet{noyan2018distributionally,qu2025decision} considered a DRO  approach, where the decision-dependent distributional ambiguity is modeled via Wasserstein distance around a decision-dependent nominal distribution. Similarly,  \cite{basciftci2021distributionally} studied a DRO approach to a facility location problem, where the distributional ambiguity of decision-dependent demand is captured with moments around a decision-dependent nominal moment. \cite{luo2019service} studied a service center location problem, where utility gains upon receiving service are location-dependent and assumed to be ambiguously described by moment-based sets. 
A joint stocking and pricing problem for a product without knowing the price-dependent demand was studied in \cite{hu2019data}. The authors introduced a functionally robust approach to hedge against various classes of decreasing convex or concave functions to model price-dependent demand. 
A DRO problem with a decision-dependent ambiguity set was studied in \cite{royset2017}, where the preference relationship between probability distributions is formed via decision-dependent cumulative distribution functions. 
\citet{qu2025decision} obtained a tractable reformulation of the studied decision-dependent DRO and established finite-sample out-of-sample and optimality guarantee with respect to the true decision-dependent SP. 
A similar problem to the one in \citet{basciftci2021distributionally}  was studied in a multistage setting in \citep{yu2022multistage}. 


\subsubsection{Two-Stage SPs with Nonconvex Recourse}
Two defining characteristics have largely shaped the research on two-stage  SPs. First, the first-stage decision variables typically affect only the constraints---and only linearly---of the second-stage problem, without entering its objective function. Second, the recourse function is generally defined as the value function of a linear program (LP) whose right-hand side is parameterized by the first-stage decisions.
Such a structure leads to a convex, piecewise-linear value function. This, in turn, enables the exploitation of well-established algorithms, such as L-shaped and  (augmented) Lagrangian methods. 
Relaxing these assumptions destroys convexity and piecewise linearity, which are crucial for ensuring global optimality. 

In recent years, the focus of research in two-stage SPs has begun to shift beyond the traditional convex frameworks toward models with nonconvex recourse functions. 
A notable structure is when the objective function of the second-stage problem is linearly parametrized by the first-stage decision variables \citep{bomze2022two}. 
The implicit convex-concave property of the recourse function was exploited in \cite{liu2020two} to develop an iterative algorithm that combines regularization and convexification to obtain iterates by solving convex subproblems. Lifting the scenario problem to the space of $(\bs{x},\bs{y})$ and using its partial Moreau envelope, \citet{li2024decomposition} developed a decomposition framework that generates strongly convex quadratic approximations of the recourse function. Moreover, an outer loop successively updates the parameter of the partial Moreau envelope.
In pursuit of global optimality, \citet{zhong2024towards} constructed polynomial lower approximations to the nonconvex recourse via linear conic optimization over nonnegative polynomial cones, which further employed semidefinite relaxations given the complex structure of this cone. \cite{kang2025bi} developed a cutting-plane algorithm by relying on the Lagrangian dual, which is finitely convergent when all first-stage variables are binary.

\subsubsection{Distinguishing Features of this Work}
Similar to existing DRO models with endogenous uncertainty, we capture decision dependency with a decision-dependent ambiguity set. Nevertheless, instead of focusing on a particular type of ambiguity set, e.g., a discrepancy- or moment-based model, we consider a polyhedral ambiguity set. 
Moreover, unlike most studies that assume a convex recourse function, we also study a nonconvex one. 
The proposed finitely-convergent cutting-plane algorithm for the problem with the nonconvex recourse is applicable to solve a general two-stage SP  when the cost vector---or equivalently, the recourse matrix---in the second-stage problem is simultaneously (linearly) bi-parameterized by the first-stage decision variables and uncertain parameters.   
However, unlike the above studies, outer approximations of the recourse function are linear in the space of first-stage decision variables, and valid inequalities are obtained by solving LPs.  Furthermore, unlike \cite{kang2025bi}, the proposed algorithm in this paper is finitely convergent even when the first-stage variables are mixed-integer. 


\subsection{Organization}

The rest of this paper is outlined as follows. 
In Section \ref{sec: formulation}, we introduce the studied class of DRO problems with a decision-dependent ambiguity set. 
In Section \ref{sec: reformulation}, we reformulate the studied problem as a nonconvex two-stage stochastic MINLP. 
In Section \ref{sec: solution}, we propose a decomposition-based cutting-plane algorithm to solve the resulting DRO problems optimally (or near-optimally) and establish its finite convergence. In Section \ref{sec: numerical}, we present numerical experiments to test the efficacy of the solution algorithm and the quality of resulting solutions. We end with conclusions and a discussion of future work in Section \ref{sec: discussion}. All the proofs and technical lemmas are relegated to the Electronic Companion (EC).

\noindent {\bf Notation}.  
Throughout this paper, vectors are denoted by boldface lowercase letters and matrices are denoted by boldface uppercase letters. Sets are denoted by calligraphic uppercase letters. 
For a set $\Cs{B} \subseteq \Bs{R}^{d}$,  $\conv{\Cs{B}}$ denote the convex hull of $\Cs{B}$. 
Let $\bs{e}_{i}$ be the $i$-th unit vector and $\bs{e}$ be a vector of ones 
 in $\Bs{R}^{d}$. 
For any $n \in \Bs{N}$, we refer to the ordered index set $\{1,\dotsc,n\}$ by $[n]$.

\section{Problem Formulation}
\label{sec: formulation}

The following decision-dependent DRO problem describes the central problem under consideration 
\begin{equation}
\label{eq: DRO_Obj}
\min_{\bs{x} \in \Cs{X} } \ \bs{c}^{\top} \bs{x}+ \bs{x}^{\top} \bs{C} \bs{x} + \max_{P \in \Cs{P}(\bs{x})} \ 	\ep{P}{h(\bs{x},\bs{\xi})},
\end{equation} 
where $\bs{x}$ is the decision vector in a nonempty and compact, deterministic mixed-integer feasible region  $\Cs{X}:= \Set*{\bs{x} \in  \Bs{Z}^{n_{1}} \times \Bs{R}^{n_x-n_{1}} }{ \bs{A}\bs{x} \ge \bs{d}, \bs{x} \ge \bs{0}}$, with $n_1 \ge 0$ integer variables.  We define a random vector $\bs{\xi} \in \Xi \subseteq \Bs{R}^{n_{\xi}}$.  
Moreover,  $h(\bs{x},\bs{\xi}): \Cs{X} \times \Xi \mapsto \Bs{R}$ is a random cost function. 
For a given $\bs{x} \in \Cs{X}$, $\Cs{P}(\bs{x})$ is a decision-dependent set of probability distributions, with $P$ as an element of this set. 
We further assume 
\begin{equation}
    \label{eq: recourse}
    h(\bs{x}, \bs{\xi})= \min_{\bs{y} \in \Cs{Y}(\bs{x}, \bs{\xi})} \psi_{0}(\bs{x}, \bs{y}, \bs{\xi}), 
\end{equation}
where  
     $\Cs{Y}(\bs{x},\bs{\xi}):=\Set*{\bs{y} \in  \Bs{R}^{n_y}}{\bs{D}\bs{y} \ge \bs{B}\bs{x}+\bs{b}, \; \bs{y} \ge \bs{0}}$, 
with matrices $\bs{D}$ and $\bs{B}$, and vector $\bs{b}$ of appropriate dimensions. 
Throughout the paper, we make the following assumptions. 

  {\bf A1 (Finite Sample Space)} \label{assum: finitespace} Each distribution  $P \in \Cs{P}(\bs{x})$ has  a decision-independent finite support $\Xi=\{\bs{\xi}_{\omega}\}_{\omega=1}^{N}$, for all $\bs{x} \in \Cs{X}$, where $N$ is the number of scenarios.
  
 {\bf A2 (Nonempty Ambiguity Set)} \label{assum: nonemptyP} For a fixed $\bs{x} \in \Cs{X}$, $\Cs{P}(\bs{x})$ is polyhedral and nonempty.

Given Assumption {\bf (A1)}, any $P \in \Cs{P}(\bs{x})$ can be presented by 
$\bs{p}=[p_{1}, \ldots, p_{N}]^{T} \in \Bs{R}^{N}$. Assumption {\bf (A2)} implies that the optimal value to the inner maximization $\max_{P \in \Cs{P}(\bs{x})} \ 	\ep{P}{h(\bs{x},\bs{\xi})}$ in \eqref{eq: DRO_Obj} is a proper function in $\bs{x} \in \Cs{X}$; hence, problem \eqref{eq: DRO_Obj} has an optimal solution with a bounded optimal value. Nonetheless, the nonemptiness of the ambiguity set is not a restrictive assumption, as one can project onto the plane of $\bs{x}$-variables to recover $\Set*{\bs{x} \in \Cs{X}}{\exists P \in \Cs{P}(x)}$. This can be achieved by cutting off points that result in infeasibility, using extreme rays of the dual feasible space. 

\subsection{Decision-Dependent Ambiguity Set of Probability Distributions}

We consider a general decision-dependent ambiguity set with a polyhedral structure, namely with a {\it generalized moment and measure inequalities}  \citep{rahimian2022frameworks} as: 
\begin{equation}
    \label{eq: simple-moment}
    \Cs{P}(\bs{x}):=
    \left\lbrace \bs{p} \ge \bs{0} \middle\vert
    \begin{array}{l}
        \ul{p}_{\omega} \le  p_{\omega} \le \ol{p}_{\omega}, \; \omega \in [N], \\
        \sum_{\omega \in [N]} p_{\omega} g^{i}(\bs{\xi}_{\omega}) \le  \vartheta^{i}(\bs{x}), \; i \in [s]
    \end{array}
    \right\rbrace,
\end{equation}
for $\bs{x} \in \Cs{X}$, where $g^{i}: \Xi \mapsto \Bs{R}$, $i \in [s]$, with $s \ge 1$. The decision-independent version of \eqref{eq: simple-moment} has been extensively studied in the literature, see, e.g., \cite{shapiro2004minmax, popescu2005semidefinite, perakis2008regret,mehrotra2014semi}, and it subsumes some classes of discrepancy- and moment-based ambiguity sets under a finite sample space, such as total variation- and Wasserstein-based sets \citep{bansal2018decomposition}.
As a shorthand notation, we use $\bs{g}=[g^{1}, \ldots, g^{s}]^{\top}$ and $\bs{\vartheta}(\bs{x})=[\vartheta^{1}(\bs{x}), \ldots, \vartheta^{s}(\bs{x})]^{\top}$. 
The first set of constraints in \eqref{eq: simple-moment} enforce a preference relationship between probability distributions, i.e., lower and upper bounds on the probabilities. 
The second set of constraints in \eqref{eq: simple-moment} bound some functions $\bs{g}$ of the random vector $\bs{\xi}$, for example, $\bs{g}$ can represent moments of $\bs{\xi}$.  To ensure that $\bs{p}$ is a probability distribution, we set $\vartheta^1(\bs{x})=1$, $g^1(\cdot)=1$, $\vartheta^2(\bs{x})=-1$, and $g^2(\cdot)=-1$, in the above definition of $\Cs{P}(\bs{x})$, for all $\bs{x} \in \Cs{X}$. 
For instance, a moment-based ambiguity set may be represented as:
\begin{equation}
\label{eq: moment}
    \Cs{P}_{\textrm{M}}(\bs{x}) = 
    \left\lbrace \bs{p} \ge \bs{0} \middle\vert
    \begin{array}{l}
         (1- \tau_\mu) \bs{\mu}_{0}(\bs{x}) \le  \sum\limits_{\omega \in [N]} p_{\omega} \bs{\xi}_{\omega}    \le (1+\tau_\mu)\bs{\mu}_{0}(\bs{x}), \\  
        \underline{\tau}_\sigma \big(\bs{\sigma}_{0}(\bs{x})^2 + \bs{\mu}_{0}(\bs{x})^2\big) \le \sum\limits_{\omega \in [N]} p_{\omega} \bs{\xi}_{\omega}^{2}\le \overline{\tau}_\sigma \big(\bs{\sigma}_{0}(\bs{x})^2 + \bs{\mu}_{0}(\bs{x})^2\big), \\
        \sum\limits_{\omega \in [N]} p_{\omega} = 1
    \end{array}
    \right\rbrace,
\end{equation}
where $\bs{\mu}_{0}(\bs{x})$ and $\bs{\sigma}_{0}(\bs{x})$ denote the vector of (nominal) decision-dependent mean and standard deviation of the random vector $\bs{\xi}$. Moreover, parameters $\tau_\mu$, $\underline{\tau}_\sigma$, and $\overline{\tau}_\sigma$ control the conservatism of resulting robust decisions by adjusting the maximum allowable deviations on the nominal first- and second-order moments, with $0\le \underline{\tau}_\sigma \le 1 \le \overline{\tau}_\sigma$.

\subsection{Recourse Function}
In this paper, we consider the objective function  
\begin{equation}
\label{eq: nonconvex-recourse}
    \psi_{0}(\bs{x}, \bs{y}, \bs{\xi})= \bs{q}^{\top}\bs{y} + \bs{x}^{\top}\bs{L} \bs{y}.
\end{equation}
The setting with $\bs{L}=\bs{0}$ is predominantly studied in the literature due to the desirable convexity and piecewise linearity of the resulting recourse function. For instance, the facility location problem introduced in Section \ref{sec: FL_app} has such an objective function. 
In this paper, we are additionally interested in the setting that $\bs{L} \neq \bs{0}$ due to its emergence in problems with endogenous uncertainty. For instance, the newsvendor problem introduced in Section \ref{sec: NV_app} has such an objective function. 
This setting is fundamentally different from the classical paradigm for two-stage SPs with continuous recourse in that the first-stage decision variables $\bs{x}$ appear not only in the constraints of the second-stage problem, but also in the objective function. As a result, the recourse function $h(\bs{x}, \bs{\xi})$ is nonconvex since $\psi_{0}(\bs{x}, \bs{y}, \bs{\xi})$ is not jointly convex in $(\bs{x},\bs{y})$, which introduces significant computational challenges (see Section \ref{sec: solution}). 

In problem \eqref{eq: recourse}, $\bs{\xi}$ includes all parameters that describe the recourse function $h(\bs{x},\bs{\xi})$, i.e.,  vectors $\bs{q}$, $\bs{b}$ and matrices $\bs{B}$, $\bs{D}$, and  $\bs{L}$. Unless otherwise stated explicitly, we assume $\bs{L} \neq \bs{0}$ and we refer to the structure in \eqref{eq: nonconvex-recourse} as a simultaneously {\it (linearly) bi-parameterized} objective in both first-stage decision variables and uncertain parameters.  Furthermore, we make the following assumption throughout the paper.

{\bf A3 (Complete Recourse)} \label{assum: finite-h} For $\bs{x} \in \Bs{R}^{n_x}$ and $\omega \in [N]$, we have a complete recourse for problem \eqref{eq: recourse}  and $\Cs{Y}(\bs{x},\bs{\xi}_{\omega})$ is a bounded set. 

\begin{remark}

\label{rem: bounded_dual}     
Assumption {\bf (A3)} implies relatively complete recourse; i.e., $\Cs{Y}(\bs{x},\bs{\xi}_{\omega})$ is feasible for any $\bs{x} \in \Cs{X}$ and $\omega \in [N]$. Thus, $ h(\bs{x},\bs{\xi}_\omega)$ is bounded from above and $\Set*{\bs{\pi} \ge \bs{0}}{\bs{\pi}^{\top} \bs{D}_{\omega} \le \bs{0}^{\top}, \ \bs{\pi}^{\top} (\bs{B}_{\omega} \bs{x} + \bs{b}_{\omega}) \ge \bs{0}} =\{\bs{0}\}$ by Farkas' Lemma. Hence, an optimal solution to the corresponding dual problem of \eqref{eq: recourse} is attained at an extreme point, showing that $ h(\bs{x},\bs{\xi}_{\omega})$ is bounded from below (by weak duality). In addition, given that we have a complete recourse for every $\bs{x} \in \Bs{R}^{n}$, then $\Set*{\bs{\pi} \ge \bs{0}}{\bs{\pi}^{\top} \bs{D}_{\omega} \le \bs{0}^{\top}}=\{\bs{0}\}$; so, the set of optimal dual solutions is bounded. 
\hfill	$\blacksquare$
\end{remark}

\section{Two-Stage Stochastic MINLP Reformulation}
\label{sec: reformulation}

A crucial step in developing efficient solution algorithms for problem \eqref{eq: DRO_Obj} is to reformulate it into an equivalent form that is more amenable to computation.
In this section, we provide a reformulation of problem \eqref{eq: DRO_Obj} with a decision-dependent polyhedral ambiguity set in the form of \eqref{eq: simple-moment}.



\begin{theorem}
    \label{thm: simple-moment-inner-lag-dual}
    Problem \eqref{eq: DRO_Obj} can be reformulated  as:
    \begin{equation}
        \label{eq: reform-simple-moment}
        \min_{\bs{x},\bs{\lambda}} \Big\{  \bs{c}^{\top} \bs{x}+ \bs{x}^{\top} \bs{C} \bs{x}+  \bs{\lambda}^{\top}\bs{\vartheta}(\bs{x}) 
        	+\sum_{\omega \in [N]} G_{\omega}(\bs{x}, \bs{\lambda}) : \bs{x}\in \Cs{X}, \; \bs{\lambda} \ge \bs{0} \Big\},
        \end{equation}
        where 
        \begin{equation}
            \label{eq: Q_k}
            G_{\omega}(\bs{x},\bs{\lambda}):=\varphi_{\omega}\Big [ h(\bs{x},\bs{\xi}_{\omega}) - \bs{\lambda}^{\top} \bs{g}(\bs{\xi}_{\omega}) \Big],
        \end{equation}
        with $\varphi_{\omega}[z]=    \ol{p}_{\omega} (z)_{+} -   \ul{p}_{\omega} (-z)_{+}$. 
\end{theorem}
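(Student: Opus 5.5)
Fix $\bs{x} \in \Cs{X}$ and dualize the inner maximization in \eqref{eq: DRO_Obj} with respect to the moment-type constraints only, keeping the box constraints $\ul{p}_\omega \le p_\omega \le \ol{p}_\omega$ explicit. By Assumption \textbf{(A1)}, the inner problem is the finite-dimensional LP
\begin{equation*}
\max_{\bs{p}} \Big\{ \sum_{\omega \in [N]} p_\omega h_\omega : \ul{p}_\omega \le p_\omega \le \ol{p}_\omega,\ \omega \in [N],\ \sum_{\omega \in [N]} p_\omega \bs{g}(\bs{\xi}_\omega) \le \bs{\vartheta}(\bs{x}) \Big\},
\end{equation*}
where $h_\omega := h(\bs{x},\bs{\xi}_\omega)$. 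These coefficients are finite by Assumption \textbf{(A3)} and Remark \ref{rem: bounded_dual}. The constraint $\bs{p}\ge \bs{0}$ is implied by (or merged into) the lower bounds, with $\ul{p}_\omega$ replaced by $\max\{\ul{p}_\omega,0\}$ if needed.

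\emph{Step 1 (partial Lagrangian).} Attach multipliers $\bs{\lambda} \ge \bs{0}$ to the $s$ constraints $\sum_\omega p_\omega \bs{g}(\bs{\xi}_\omega) \le \bs{\vartheta}(\bs{x})$. The Lagrangian dual is
\begin{equation*}
\min_{\bs{\lambda}\ge \bs{0}} \Big\{ \bs{\lambda}^\top \bs{\vartheta}(\bs{x}) + \max_{\ul{p}_\omega \le p_\omega \le \ol{p}_\omega} \sum_{\omega \in [N]} p_\omega \big(h_\omega - \bs{\lambda}^\top \bs{g}(\bs{\xi}_\omega)\big) \Big\}.
\end{equation*}
The inner maximum separates across scenarios. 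Each term is maximized at $p_\omega=\ol{p}_\omega$ when the coefficient $z_\omega = h_\omega-\bs{\lambda}^\top\bs{g}(\bs{\xi}_\omega)$ is positive, and at $p_\omega=\ul{p}_\omega$ otherwise. Its value is therefore $\ol{p}_\omega (z_\omega)_+ - \ul{p}_\omega(-z_\omega)_+ = \varphi_\omega[z_\omega] = G_\omega(\bs{x},\bs{\lambda})$, which matches \eqref{eq: Q_k}.

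\emph{Step 2 (strong duality).} For fixed $\bs{x}$ this is a linear program. By Assumption \textbf{(A2)} it is feasible, and it is bounded because the box constraints make the feasible region compact. LP strong duality (partial Lagrangian duality for LPs) therefore gives equality of the primal and dual values, and the dual minimum over $\bs{\lambda}\ge\bs{0}$ is attained. I expect this to be the main point requiring care. I would justify it either by writing the full LP dual, with variables $\bs{\lambda}$ for the moment rows and $\bs{\alpha},\bs{\beta}\ge\bs{0}$ for the upper and lower bounds, and eliminating $\bs{\alpha},\bs{\beta}$ in closed form to recover $\varphi_\omega$, or by invoking a standard saddle-point theorem for LPs. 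No Slater condition is needed because all constraints are linear.

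\emph{Step 3 (merge minimizations).} Substitute the dual representation into \eqref{eq: DRO_Obj}. This yields $\min_{\bs{x}\in\Cs{X}}\{\bs{c}^\top\bs{x}+\bs{x}^\top\bs{C}\bs{x} + \min_{\bs{\lambda}\ge\bs{0}}[\bs{\lambda}^\top\bs{\vartheta}(\bs{x}) + \sum_\omega G_\omega(\bs{x},\bs{\lambda})]\}$. Combining the two minimizations into a joint minimization over $(\bs{x},\bs{\lambda})$ gives \eqref{eq: reform-simple-moment}. Optimal solutions correspond: an optimal $\bs{x}^*$ paired with its attained dual minimizer $\bs{\lambda}^*$ is optimal for \eqref{eq: reform-simple-moment}, and conversely the $\bs{x}$-component of any optimal pair is optimal for \eqref{eq: DRO_Obj}.
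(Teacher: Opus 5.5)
Your proposal is correct and follows essentially the same route as the paper: you dualize only the moment-type constraints with $\bs{\lambda}\ge\bs{0}$ while keeping the box $\ul{p}_\omega\le p_\omega\le\ol{p}_\omega$, evaluate the separable inner maximum scenario by scenario to get $\varphi_\omega[h(\bs{x},\bs{\xi}_\omega)-\bs{\lambda}^\top\bs{g}(\bs{\xi}_\omega)]$, invoke LP strong duality via Assumption \textbf{(A2)}, and merge the two minimizations. Your explicit treatment of boundedness (compact box) and dual attainment just spells out what the paper compresses into a one-line appeal to LP duality.
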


Note that $G_{\omega}(\bs{x},\bs{\lambda})$ is well-defined by the finiteness of $h(\bs{x},\bs{\xi}_{\omega})$ (see Remark \ref{rem: bounded_dual}). More importantly, $G_{\omega}(\bs{x},\bs{\lambda})$ is nonconvex in $(\bs{x},\bs{\lambda})$. 
Nonetheless, as $\varphi_{\omega}[z]$ is monotonically nondecreasing (see Lemma \ref{lem: convex_function}), a a two-stage reformulation of \eqref{eq: DRO_Obj} can be formed.

\begin{corollary}
    \label{cor: reform-recourse}
    Problem \eqref{eq: DRO_Obj}  can be reformulated as the following two-stage stochastic MINLP:
    \begin{equation}
        \label{eq: reform-recourse}
        \min_{\bs{x},\bs{\lambda}} \Big\{ \bs{c}^{\top} \bs{x}+ \bs{x}^{\top} \bs{C} \bs{x}+   \bs{\lambda}^{\top}\bs{\vartheta}(\bs{x}) 
        	+\sum_{\omega \in [N]} G_{\omega}(\bs{x}, \bs{\lambda}) : \bs{x}\in \Cs{X}, \; \bs{\lambda} \ge \bs{0} \Big\} 
        \end{equation}
        where 
        \begin{equation}
            \label{eq: Q_k_recourse}
                G_{\omega}(\bs{x},\bs{\lambda}) =\min_{\bs{y}, \mu, \gamma } \Big\{\gamma \ol{p}_{\omega} - \mu \ul{p}_{\omega} : 
                \gamma - \mu \ge \psi_{0}(\bs{x},\bs{y}, \bs{\xi}_{\omega}) - \bs{\lambda}^{\top} \bs{g}(\bs{\xi}_{\omega}), \; 
                \bs{y} \in \Cs{Y}(\bs{x},\bs{\xi}_{\omega}), \;
                \gamma, \mu \ge 0\Big\}. 
        \end{equation}
\end{corollary}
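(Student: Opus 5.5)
By Theorem \ref{thm: simple-moment-inner-lag-dual}, problem \eqref{eq: DRO_Obj} is already equivalent to \eqref{eq: reform-simple-moment}, and \eqref{eq: reform-recourse} has the same outer problem. So the plan is to show, for every fixed $\bs{x}\in\Cs{X}$, $\bs{\lambda}\ge\bs{0}$ and $\omega\in[N]$, that the optimal value of the inner minimization \eqref{eq: Q_k_recourse} equals $\varphi_{\omega}[h(\bs{x},\bs{\xi}_\omega)-\bs{\lambda}^{\top}\bs{g}(\bs{\xi}_\omega)]$. I will do this in two steps: first eliminate $(\gamma,\mu)$ for a fixed $\bs{y}$, then minimize over $\bs{y}$ and use monotonicity of $\varphi_\omega$.

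\textbf{Step 1 (fix $\bs{y}$).} For fixed $\bs{y}\in\Cs{Y}(\bs{x},\bs{\xi}_\omega)$, set $z(\bs{y})=\psi_0(\bs{x},\bs{y},\bs{\xi}_\omega)-\bs{\lambda}^{\top}\bs{g}(\bs{\xi}_\omega)$. I will show
\[
\min\{\gamma\ol{p}_\omega-\mu\ul{p}_\omega:\gamma-\mu\ge z(\bs{y}),\ \gamma,\mu\ge0\}=\varphi_\omega[z(\bs{y})].
\]
This is a two-variable LP. The choice $\gamma=(z)_+$, $\mu=(-z)_+$ is feasible and attains $\varphi_\omega[z]$. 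For the reverse inequality, use $\ol{p}_\omega\ge\ul{p}_\omega\ge0$, which is implicit in \eqref{eq: simple-moment} and needed for nonemptiness. Write the objective as
\[
\gamma\ol{p}_\omega-\mu\ul{p}_\omega=\ul{p}_\omega(\gamma-\mu)+(\ol{p}_\omega-\ul{p}_\omega)\gamma .
\]
Since $\gamma-\mu\ge z$ and $\gamma\ge(z)_+$, this is at least $\ul{p}_\omega z+(\ol{p}_\omega-\ul{p}_\omega)(z)_+$. That quantity equals $\varphi_\omega[z]$: for $z\ge0$ it is $\ol{p}_\omega z$, and for $z<0$ it is $\ul{p}_\omega z$. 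The same decomposition also shows the LP is bounded below, which is the one point needing care.

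\textbf{Step 2 (minimize over $\bs{y}$).} The joint minimum over $(\bs{y},\gamma,\mu)$ equals $\inf_{\bs{y}\in\Cs{Y}(\bs{x},\bs{\xi}_\omega)}\varphi_\omega[z(\bs{y})]$. The map $\varphi_\omega$ is nondecreasing (Lemma \ref{lem: convex_function}) and continuous. Also, $\inf_{\bs{y}} z(\bs{y})=h(\bs{x},\bs{\xi}_\omega)-\bs{\lambda}^{\top}\bs{g}(\bs{\xi}_\omega)$, and this infimum is attained: by \textbf{(A3)} and Remark \ref{rem: bounded_dual}, $\Cs{Y}$ is nonempty and bounded and $\psi_0$ is linear in $\bs{y}$ for fixed $\bs{x}$. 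Hence the infimum equals $\varphi_\omega$ evaluated at the minimum, namely $G_\omega(\bs{x},\bs{\lambda})$ as defined in \eqref{eq: Q_k}. The minimum is attained at an optimal $\bs{y}$ together with $\gamma=(z)_+$ and $\mu=(-z)_+$.

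Substituting this identity into \eqref{eq: reform-simple-moment} gives \eqref{eq: reform-recourse}. The main obstacle is the lower bound in Step 1, which depends on $\ol{p}_\omega\ge\ul{p}_\omega$. If that ordering ever failed, the LP would be unbounded; in that case the ambiguity set would already be empty, which contradicts \textbf{(A2)}.
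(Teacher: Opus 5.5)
Your proof is correct and follows the paper's route: monotonicity of $\varphi_\omega$ (Lemma~\ref{lem: convex_function}) lets the minimization over $\bs{y}$ pass through $\varphi_\omega$, the variables $(\gamma,\mu)$ linearize $\varphi_\omega$, and Theorem~\ref{thm: simple-moment-inner-lag-dual} finishes the argument; the paper only asserts the linearization step, whereas you verify it explicitly. One small correction: your Step~1 lower bound also uses $\ul{p}_\omega \ge 0$, and this is not forced by nonemptiness of $\Cs{P}(\bs{x})$, because $\bs{p}\ge\bs{0}$ is imposed separately; it is, however, the same implicit assumption the paper makes in Theorem~\ref{thm: simple-moment-inner-lag-dual} and Lemma~\ref{lem: convex_function}.
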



It can be seen from Corollary \ref{cor: reform-recourse} that in addition to already existing nonlinear terms $\bs{x}^\top \bs{C} \bs{x}$ and $\bs{x}^\top \bs{L} \bs{y}$, dualization introduces a nonconvex, nonlinear coupling $\bs{\lambda}^\top \bs{\vartheta}(\bs{x})$, 
which poses a computational challenge. 
Moreover, one can interpret \eqref{eq: reform-recourse} as a two-stage SP, where $(\bs{x}, \bs{\lambda})$ are the first-stage decisions and $(\bs{y}_{\omega}, \mu_{\omega}, \gamma_{\omega})$, $\omega \in [N]$, are the second-stage decisions. 
In Section \ref{sec: solution}, we leverage this structure to handle the abovementioned computational challenges. 



Next, we present an extensive deterministic equivalent formulation (DEF) of \eqref{eq: reform-recourse}. 

\begin{corollary}
    \label{cor: reform-simple-moment-recourse}
    Problem \eqref{eq: DRO_Obj} can be reformulated  as the following nonconvex MINLP:
    \begin{equation}
        \label{eq: reform-simple-moment-recourse-linearized}
        \begin{aligned}
        \min_{\bs{x},\bs{\lambda},\bs{y}_1, \ldots, \bs{y}_{N}, \bs{\gamma}, \bs{\mu}} \ &  \bs{c}^{\top} \bs{x}+ \bs{x}^{\top} \bs{C} \bs{x} + \bs{\lambda}^{\top}\bs{\vartheta}(\bs{x}) 
        		+\sum_{\omega \in [N]} (\gamma_{\omega} \ol{p}_{\omega} - \mu_{\omega} \ul{p}_{\omega}) \\
        \st \quad &  \bs{x}\in \Cs{X}, \; \bs{\lambda} \ge \bs{0}, \\ 
        & \gamma_{\omega} - \mu_{\omega} \ge  \psi_{0}(\bs{x},\bs{y}_{\omega}, \bs{\xi}_{\omega}) - \bs{\lambda}^{\top} \bs{g}(\bs{\xi}_{\omega}), \quad  \omega \in [N], \\
        & \bs{y}_{\omega} \in \Cs{Y}(\bs{x},\bs{\xi}_{\omega}), \quad  \omega \in [N], \\
        & \bs{\gamma}, \bs{\mu}\ge \bs{0}.
        \end{aligned}
    \end{equation}
\end{corollary}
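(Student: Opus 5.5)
The plan is to start from Corollary \ref{cor: reform-recourse} and merge the scenario-wise second-stage minimizations into the outer minimization. The objective of \eqref{eq: reform-recourse} is $\bs{c}^{\top}\bs{x}+\bs{x}^{\top}\bs{C}\bs{x}+\bs{\lambda}^{\top}\bs{\vartheta}(\bs{x})+\sum_{\omega\in[N]}G_{\omega}(\bs{x},\bs{\lambda})$. For fixed $(\bs{x},\bs{\lambda})$, each $G_{\omega}$ in \eqref{eq: Q_k_recourse} is a minimization over $(\bs{y}_{\omega},\mu_{\omega},\gamma_{\omega})$. These blocks share no variables and no constraints across scenarios, and they are coupled to the rest of the problem only through the fixed parameters $(\bs{x},\bs{\lambda})$. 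Hence the sum of the minima equals the minimum of the sum over the product of the feasible sets. Then the iterated minimization $\min_{\bs{x},\bs{\lambda}}\min_{\bs{y}_{1},\dots,\bs{y}_{N},\bs{\gamma},\bs{\mu}}$ equals the joint minimization. This gives exactly \eqref{eq: reform-simple-moment-recourse-linearized}.

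The point needing care is that the min in \eqref{eq: Q_k_recourse} is attained and finite, so that the equivalence holds with optimal solutions and not only with infima. I argue this as follows.

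First, by Assumption (A3), $\Cs{Y}(\bs{x},\bs{\xi}_{\omega})$ is nonempty and bounded, hence compact. The function $\psi_{0}$ is continuous in $\bs{y}$, so $h(\bs{x},\bs{\xi}_{\omega})$ is finite and attained (Remark \ref{rem: bounded_dual}).

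Second, for any fixed feasible $\bs{y}$, set $z=\psi_{0}(\bs{x},\bs{y},\bs{\xi}_{\omega})-\bs{\lambda}^{\top}\bs{g}(\bs{\xi}_{\omega})$. Assuming $\ol{p}_{\omega}\ge\ul{p}_{\omega}\ge0$, the optimal $(\gamma,\mu)$ are $\gamma=(z)_{+}$ and $\mu=(-z)_{+}$, with value $\varphi_{\omega}[z]$. This holds because increasing $\gamma$ and $\mu$ by the same amount changes the cost by $\ol{p}_{\omega}-\ul{p}_{\omega}\ge 0$.

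Third, $\varphi_{\omega}$ is nondecreasing (Lemma \ref{lem: convex_function}). So minimizing over $\bs{y}$ is attained at a minimizer of $\psi_{0}$, which gives $\varphi_{\omega}[h(\bs{x},\bs{\xi}_{\omega})-\bs{\lambda}^{\top}\bs{g}(\bs{\xi}_{\omega})]$. This is consistent with \eqref{eq: Q_k}, and this consistency is precisely what Corollary \ref{cor: reform-recourse} already provides, so I may simply invoke it.

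Finally, I show both directions of the equivalence.

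For one direction, take an optimal $(\bs{x}^*,\bs{\lambda}^*)$ of \eqref{eq: reform-recourse}, which exists by the discussion after (A2) together with Theorem \ref{thm: simple-moment-inner-lag-dual}. Append scenario-wise minimizers $(\bs{y}^*_{\omega},\gamma^*_{\omega},\mu^*_{\omega})$. The result is feasible for \eqref{eq: reform-simple-moment-recourse-linearized} with the same objective value.

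For the other direction, take any feasible point of \eqref{eq: reform-simple-moment-recourse-linearized}. Its scenario terms satisfy $\gamma_{\omega}\ol{p}_{\omega}-\mu_{\omega}\ul{p}_{\omega}\ge G_{\omega}(\bs{x},\bs{\lambda})$, so its objective value is at least the optimal value of \eqref{eq: reform-recourse}.

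The two optimal values therefore coincide, and the $(\bs{x},\bs{\lambda})$-projections of the optimal solutions agree. I expect no real obstacle beyond this attainment and finiteness bookkeeping. The nonconvexity of the terms $\bs{x}^{\top}\bs{L}\bs{y}$ and $\bs{\lambda}^{\top}\bs{\vartheta}(\bs{x})$ plays no role in the argument, since only the separability of the inner minimizations is used.
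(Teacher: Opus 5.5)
Your proposal is correct and follows the same approach as the paper. The paper gives no separate proof; it presents this corollary as the extensive deterministic equivalent of Corollary~\ref{cor: reform-recourse}, obtained by merging the separable scenario-wise minimizations in \eqref{eq: Q_k_recourse} into the outer minimization, exactly as you do. Your extra attainment bookkeeping (compactness of $\Cs{Y}(\bs{x},\bs{\xi}_{\omega})$ under (A3) and the explicit optimal $(\gamma,\mu)=((z)_{+},(-z)_{+})$) is correct. It is only needed for the correspondence of optimal solutions, since equality of optimal values already follows from the nested infimum equalling the joint infimum.
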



\section{An Exact Cutting-Plane Algorithm}
\label{sec: solution}



The two-stage structure outlined in Corollary \ref{cor: reform-recourse} renders problem \eqref{eq: reform-recourse} suitable for decomposition-based solution approaches. 
In this section, we develop a cutting-plane algorithm to solve \eqref{eq: reform-recourse} and establish its finite convergence (Theorem \ref{thm: DD-finite-convergence-nonconvex}). 
To handle the nonconvex $G(\bs{x},\bs{\lambda})$, this algorithm relies on successively generating disjunctive cuts in the space of first-stage decision variables. Each set in the underlying disjunction represents an epigraphic reformulation of the (recourse) Lagrangian function given a vector of dual values, formed at a boundary of recourse decisions which are {\it bilinearly} connected to first-stage decision variables. 

The proposed iterative algorithm requires solving a nonconvex master problem with multilinear terms. 
Various techniques have been developed for globally optimizing such problems \citep{fampa2021convexification,burer2012non}, some of which are implemented in branch-and-bound schemes within commercial nonconvex solvers. These solvers typically rely on bounds on variables. In Lemma~\ref{lem: boundedness_dual}, we show that there exists a compact set $\Cs{L}\subseteq \Bs{R}_{+}^{s}$ such that restricting $\bs{\lambda}$ to $\Cs{L}$ preserves the optimal value of \eqref{eq: reform-simple-moment}. 
In the numerical experiments in Section~\ref{sec: numerical}, we use commercial solvers supplied with variable bounds to solve the master problems. For our theoretical analysis, we assume access to an oracle that returns an $\epsilon$-optimal solution. A related oracle implementation for nonconvex bilinear programs is provided by \citet{rahimian2020finitely}, who developed a finitely-convergent cutting-plane algorithm for computing $\epsilon$-optimal solutions.

{\bf A4 ($\epsilon$-Optimal Global Solver)} For any $\epsilon>0$, there is an oracle that solves a nonconvex problem  $z^*=\min_{(\bs{x},\bs{\lambda}, \theta) \in \Cs{S}} \  \bs{c}^{\top} \bs{x}+ \bs{x}^{\top} \bs{C} \bs{x} + \bs{\lambda}^{\top}\bs{\vartheta}(\bs{x})
		+ \theta $ to $\epsilon$-optimality over a mixed-integer set $\Cs{S}$. That is, it obtains a solution  $(\hat{\bs{x}},\hat{\bs{\lambda}},\hat{\theta})$ such that $\bs{c}^{\top} \hat{\bs{x}}+ \hat{\bs{x}}^{\top} \bs{C} \hat{\bs{x}} + \bs{\vartheta}(\hat{\bs{x}})^{\top} \hat{\bs{\lambda}} + \hat{\theta} \le z^{*} + \epsilon$.

\subsection{Lagrangian Dual Formulation}
\label{sec: alg_nonconvex}
The nonconvex $G_{\omega}(\bs{x},\bs{\lambda})$, defined in \eqref{eq: Q_k_recourse}, is the value function of an LP as: 
\begin{subequations}
    \label{eq: Q_k_nonconvex-recourse}
    \begin{align}
        G_{\omega}(\bs{x},\bs{\lambda}) =\min_{\bs{y}, \mu, \gamma} & \ \gamma\ol{p}_{\omega} - \mu \ul{p}_{\omega} \\
        & \gamma - \mu - \bs{q}_{\omega}^{\top}\bs{y} -\bs{x}^{\top} \bs{L}_{\omega} \bs{y} \ge - \bs{\lambda}^{\top} \bs{g}(\bs{\xi}_{\omega}), \label{nonconvexsigma}\\
        & \bs{D}_{\omega}\bs{y} \ge \bs{B}_{\omega}\bs{x}+\bs{b}_{\omega}, \label{nonconvexpi}\\ 
        & \bs{y} \ge \bs{0}, \; \gamma, \mu \ge 0. 
    \end{align}
\end{subequations}
Let $\sigma$ and $\bs{\pi}$ be the Lagrange multipliers corresponding to \eqref{nonconvexsigma} and \eqref{nonconvexpi}, respectively. Moreover, $\bs{\pi}_{\bs{y}}$, $\pi_{\gamma}$, and $\pi_{\mu}$ are the Lagrange multipliers corresponding to $\bs{y} \ge \bs{0}$, $\gamma \ge 0$, and $\mu \ge 0$. 
The Lagrangian function can be written as: 
\begin{align*}
    \Fs{L}_{\omega}(\bs{x},\bs{\lambda},\bs{y}, \mu, \gamma, \sigma, \bs{\pi}, \bs{\pi}_{\bs{y}}, \pi_{\gamma}, \pi_{\mu}) 
    & :=  
    -\sigma  \bs{\lambda}^{\top} \bs{g}(\bs{\xi}_{\omega}) + \bs{\pi}^{\top} (\bs{B}_{\omega}\bs{x}+\bs{b}_{\omega}) +  \gamma ( \ol{p}_{\omega} -  \pi_{\gamma} -\sigma )    \\
    & \qquad - \mu (\ul{p}_{\omega} +\pi_{\mu} - \sigma)+ (\sigma \bs{q}_{\omega}^{\top} - \bs{\pi}_{\bs{y}}^{\top} - \bs{\pi}^{\top} \bs{D}_{\omega} + \sigma  \bs{x}^{\top} \bs{L}_{\omega})\bs{y}. 
\end{align*}
Consider the Lagrangian dual function  
\begin{equation*}
    Q_{\omega}(\bs{x},\bs{\lambda}, \sigma, \bs{\pi}, \bs{\pi}_{\bs{y}}, \pi_{\gamma}, \pi_{\mu}):=\min_{\bs{y}, \mu, \gamma} \ \Fs{L}_{\omega}(\bs{x},\bs{\lambda},\bs{y}, \mu, \gamma, \sigma, \bs{\pi}, \bs{\pi}_{\bs{y}}, \pi_{\gamma}, \pi_{\mu}).  
\end{equation*}
By strong duality and using the compact set $\Cs{L}$ (Lemma~\ref{lem: boundedness_dual}), \eqref{eq: reform-recourse} can be reformulated as:
\begin{equation}
\label{eq: DD-nonconvex-recourse-reform}
\begin{aligned}
\min_{\bs{x},\bs{\lambda}, \bs{\theta}} \ &   \bs{\lambda}^{\top}\bs{\vartheta}(\bs{x})
		+ \sum_{\omega \in [N]} \theta_{\omega} \\
        \st \quad  &  \bs{x}\in \Cs{X}, \; \bs{\lambda} \in \Cs{L},\\
        & \theta_{\omega} \ge Q_{\omega}(\bs{x},\bs{\lambda}, \sigma, \bs{\pi}, \bs{\pi}_{\bs{y}}, \pi_{\gamma}, \pi_{\mu}), \quad \omega \in [N], \; \forall \bs{\pi}, \bs{\pi}_{\bs{y}} \ge \bs{0}, \; \sigma, \pi_{\gamma}, \pi_{\mu} \ge 0. 
\end{aligned}
\end{equation}
Let us consider the Lagrangian dual problem 
\begin{align}
G_{\omega}(\bs{x},\bs{\lambda})
&= \max_{\bs{\pi},\bs{\pi}_{\bs{y}} \ge \bs{0},
          \sigma,\pi_{\gamma},\pi_{\mu} \ge 0}
   Q_{\omega}(\bs{x},\bs{\lambda},\sigma,\bs{\pi},
              \bs{\pi}_{\bs{y}},\pi_{\gamma},\pi_{\mu})
   \notag\\
&= \max_{\bs{\pi},\bs{\pi}_{\bs{y}} \ge \bs{0},
          \sigma,\pi_{\gamma},\pi_{\mu} \ge 0}
   \Bigl\{
   -\sigma \bs{\lambda}^{\top}\bs{g}(\bs{\xi}_{\omega})
   + \bs{\pi}^{\top}(\bs{B}_{\omega}\bs{x}+\bs{b}_{\omega})
   \notag\\
&\qquad\qquad {}
   + \min_{\bs{y},\mu,\gamma}
   \gamma(\ol{p}_{\omega}-\pi_{\gamma}-\sigma)
   - \mu(\ul{p}_{\omega}+\pi_{\mu}-\sigma)
   + \bigl(
       \sigma\bs{q}_{\omega}^{\top}
       - \bs{\pi}_{\bs{y}}^{\top}
       - \bs{\pi}^{\top}\bs{D}_{\omega}
       + \sigma\bs{x}^{\top}\bs{L}_{\omega}
     \bigr)\bs{y}
   \Bigr\}
   \notag\\
\begin{split}
&= \begin{aligned}[t]
   \max_{\bs{\pi} \ge \bs{0},\sigma \ge 0}
   \quad &
   -\sigma\bs{\lambda}^{\top}\bs{g}(\bs{\xi}_{\omega})
   +\bs{\pi}^{\top}(\bs{B}_{\omega}\bs{x}+\bs{b}_{\omega})\\
   \st \quad &
   \ul{p}_{\omega}\le \sigma\le \ol{p}_{\omega},\\
   & -\sigma\bs{q}_{\omega}^{\top}
     +\bs{\pi}^{\top}\bs{D}_{\omega}
     -\sigma\bs{x}^{\top}\bs{L}_{\omega}
     \le \bs{0}^{\top}.
   \end{aligned}
\end{split}
\label{eq: Lagrangian}
\end{align}
where $\pi_{\mu}$ and $\pi_{\gamma}$, are the slack variables to constraints $\ul{p}_{\omega} \le \sigma$ and $- \ol{p}_{\omega} \le -\sigma$, respectively, and $\bs{\pi}_{\bs{y}}$ is the slack variable to the last constraint of \eqref{eq: Lagrangian}. 
\begin{proposition}
\label{prop: Lagrangian}
Given $\hat{\bs{x}} \in \Cs{X}$ and $\hat{\bs{\lambda}} \in \Cs{L}$, let $(\hat{\sigma},\hat{\bs{\pi}})$ 
be optimal Lagrange multipliers to \eqref{eq: Lagrangian}
with zero duality gap. 
Then, the Lagrangian dual function can be written as:
\begin{equation}
Q_{\omega}(\bs{x},\bs{\lambda}, \hat{\sigma}, \hat{\bs{\pi}})  
= - \hat{\sigma}   
 \bs{\lambda}^{\top} \bs{g}(\bs{\xi}_{\omega}) +  \hat{\bs{\pi}}^{\top} (\bs{B}_{\omega}\bs{x}+\bs{b}_{\omega}) + \min_{\bs{y}_{\omega}} \  \hat{\sigma}  (\bs{x} - \hat{\bs{x}})^{\top} \bs{L}_{\omega}\bs{y}_{\omega}. \label{eq: Lagrangian-nonconvex}
\end{equation}
\end{proposition}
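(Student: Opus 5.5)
We would prove Proposition~\ref{prop: Lagrangian} by evaluating the inner minimization in the definition of $Q_{\omega}$ at the fixed multipliers $(\hat{\sigma},\hat{\bs{\pi}})$ together with the slack multipliers $\hat{\pi}_{\gamma}=\ol{p}_{\omega}-\hat{\sigma}$, $\hat{\pi}_{\mu}=\hat{\sigma}-\ul{p}_{\omega}$, and $\hat{\bs{\pi}}_{\bs{y}}^{\top}=\hat{\sigma}\bs{q}_{\omega}^{\top}-\hat{\bs{\pi}}^{\top}\bs{D}_{\omega}+\hat{\sigma}\hat{\bs{x}}^{\top}\bs{L}_{\omega}$. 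These choices are the ones implicitly used in the last equality of \eqref{eq: Lagrangian}. Because $(\hat{\sigma},\hat{\bs{\pi}})$ is feasible for \eqref{eq: Lagrangian} at $\hat{\bs{x}}$, all of these slacks are nonnegative. Hence they form a legitimate point in the domain over which $Q_{\omega}$ is defined and which appears in the cuts of \eqref{eq: DD-nonconvex-recourse-reform}.

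The first step is to substitute these multipliers into $\Fs{L}_{\omega}$ for arbitrary $(\bs{x},\bs{\lambda})$. The coefficient of $\gamma$ becomes $\ol{p}_{\omega}-\hat{\pi}_{\gamma}-\hat{\sigma}=0$, and the coefficient of $\mu$ becomes $\ul{p}_{\omega}+\hat{\pi}_{\mu}-\hat{\sigma}=0$. So the $\gamma$ and $\mu$ terms vanish identically and contribute nothing to the minimization. The coefficient of $\bs{y}$ is
\[
\hat{\sigma}\bs{q}_{\omega}^{\top}-\hat{\bs{\pi}}_{\bs{y}}^{\top}-\hat{\bs{\pi}}^{\top}\bs{D}_{\omega}+\hat{\sigma}\bs{x}^{\top}\bs{L}_{\omega}.
\]
Inserting the definition of $\hat{\bs{\pi}}_{\bs{y}}$, everything cancels except $\hat{\sigma}(\bs{x}-\hat{\bs{x}})^{\top}\bs{L}_{\omega}$. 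The remaining terms, $-\hat{\sigma}\bs{\lambda}^{\top}\bs{g}(\bs{\xi}_{\omega})+\hat{\bs{\pi}}^{\top}(\bs{B}_{\omega}\bs{x}+\bs{b}_{\omega})$, do not depend on $(\bs{y},\mu,\gamma)$. Taking the minimum over $\bs{y}$ then yields exactly \eqref{eq: Lagrangian-nonconvex}.

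The main point needing care is the domain of the inner minimization over $\bs{y}$. If $\bs{y}$ ranged over all of $\Bs{R}^{n_y}$, the linear term would give $-\infty$ unless $\bs{x}=\hat{\bs{x}}$, and the cut would be vacuous. We therefore interpret the minimization as over a bounded set of recourse decisions, which is consistent with the disjunctive description given at the start of Section~\ref{sec: solution}: the disjunction is formed at a boundary of the recourse decisions. Assumption {\bf (A3)} guarantees that $\Cs{Y}(\bs{x},\bs{\xi}_{\omega})$ is bounded, so the relevant $\bs{y}$ lie in a box, and the minimum of the linear term is attained at a vertex of that box. That is the source of the disjunction.

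Finally, the zero duality gap hypothesis gives the consistency check $Q_{\omega}(\hat{\bs{x}},\hat{\bs{\lambda}},\hat{\sigma},\hat{\bs{\pi}})=G_{\omega}(\hat{\bs{x}},\hat{\bs{\lambda}})$. At $\bs{x}=\hat{\bs{x}}$ the last term vanishes, and the remaining expression is precisely the optimal value of the LP dual \eqref{eq: Lagrangian}.
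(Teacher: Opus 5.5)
Your proposal is correct and follows essentially the same route as the paper. You fix the slacks $(\hat{\bs{\pi}}_{\bs{y}},\hat{\pi}_{\gamma},\hat{\pi}_{\mu})$ of \eqref{eq: Lagrangian} at $\hat{\bs{x}}$, note that the $\gamma$ and $\mu$ coefficients vanish and the $\bs{y}$ coefficient collapses to $\hat{\sigma}(\bs{x}-\hat{\bs{x}})^{\top}\bs{L}_{\omega}$, and then minimize over $\bs{y}$; your remark on the domain of $\bs{y}$ is a fair clarification (the paper only states the bounds $\underline{\bs{y}}_{\omega}\le\bs{y}_{\omega}\le\overline{\bs{y}}_{\omega}$ in Lemma~\ref{lem: optimality_conditions}), but the identity does not need it, since it holds pointwise in $\bs{y}$ for any domain used on both sides.
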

Consequently, an optimal solution of the Lagrangian dual function \eqref{eq: Lagrangian-nonconvex} only depends on variables $\bs{y}$ that are bilinearly connected to $\bs{x}$ in the description of the objective function $\psi_{0}(\bs{x}, \bs{y}, \bs{\xi}_{\omega})=\bs{q}_{\omega}^{\top}\bs{y}+\bs{x}^{\top}\bs{L}_{\omega}\bs{y}$. 

\begin{lemma}
    \label{lem: optimality_conditions}
    Suppose that Assumption {\bf (A3)} holds. For $\omega \in [N]$, let $\underline{\bs{y}}_{\omega} \le \bs {y}_{\omega} \le \overline{\bs{y}}_{\omega}$, and $\bs{l}_{\omega,j}$ be the $j$-th column of matrix $\bs{L}_{\omega}$. Let $\bs{y}^*_{\omega}$ be an optimal solution to the  Lagrangian dual function \eqref{eq: Lagrangian-nonconvex}. If $(\bs{x} - \hat{\bs{x}})^{\top} \bs{l}_{\omega,j} \ge 0$, then $y^*_{\omega,j}=\underline{y}_{\omega,j}$. Otherwise, if $(\bs{x} - \hat{\bs{x}})^{\top} \bs{l}_{\omega,j} \le 0$, then $y^*_{\omega,j}=\overline{y}_{\omega,j}$. 
\end{lemma}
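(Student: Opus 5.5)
The plan is to view the inner minimization in \eqref{eq: Lagrangian-nonconvex} as a separable linear program over a box and read off the minimizer coordinate by coordinate.

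First, we justify the box. By Assumption \textbf{(A3)}, $\Cs{Y}(\bs{x},\bs{\xi}_{\omega})$ is bounded for every $\bs{x}$, and $\Cs{X}$ is compact. Hence the recourse variables stay in a compact set, and finite componentwise bounds $\underline{\bs{y}}_{\omega} \le \bs{y}_{\omega} \le \overline{\bs{y}}_{\omega}$ exist. One can take $\underline{\bs{y}}_{\omega}=\bs{0}$ and $\overline{\bs{y}}_{\omega}$ equal to the componentwise maxima of $\bs{y}$ over $\Cs{Y}(\bs{x},\bs{\xi}_\omega)$, $\bs{x}\in \conv{\Cs{X}}$, which are finite by boundedness. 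We then understand the minimization over $\bs{y}_{\omega}$ in \eqref{eq: Lagrangian-nonconvex} as taken over this box. This is legitimate because, after Lagrangian relaxation of \eqref{nonconvexpi} and of $\bs{y}\ge\bs{0}$ (the latter absorbed in the slack $\bs{\pi}_{\bs{y}}$), the only remaining restriction on $\bs{y}$ is the a priori bound. Adding redundant bound constraints keeps strong duality, since \eqref{eq: Q_k_nonconvex-recourse} is an LP for fixed $(\bs{x},\bs{\lambda})$. Making this step rigorous is the main obstacle. Without the box, the minimization is unbounded unless $(\bs{x}-\hat{\bs{x}})^\top\bs{L}_\omega=\bs{0}$. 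So we must argue that the bounds implied by \textbf{(A3)} can be appended to \eqref{eq: Q_k_nonconvex-recourse} without changing $G_\omega$ or the optimality of $(\hat\sigma,\hat{\bs{\pi}})$ from Proposition~\ref{prop: Lagrangian}. We do this by noting that the redundant constraints receive zero multipliers at $(\hat{\bs{x}},\hat{\bs{\lambda}})$.

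Second, we write the objective as
\[
\hat{\sigma}(\bs{x}-\hat{\bs{x}})^{\top}\bs{L}_{\omega}\bs{y}_{\omega}=\sum_{j}\hat{\sigma}\,\big((\bs{x}-\hat{\bs{x}})^{\top}\bs{l}_{\omega,j}\big)\,y_{\omega,j}.
\]
The feasible box is a product of intervals, so the problem separates into one-dimensional problems $\min\{\hat\sigma a_j y_j : \underline{y}_{\omega,j}\le y_j\le \overline{y}_{\omega,j}\}$ with $a_j=(\bs{x}-\hat{\bs{x}})^\top\bs{l}_{\omega,j}$.

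Third, we use $\hat\sigma\ge \ul{p}_\omega\ge 0$ from the dual constraints in \eqref{eq: Lagrangian}. If $a_j\ge 0$, the coefficient $\hat\sigma a_j$ is nonnegative, so the lower endpoint $\underline{y}_{\omega,j}$ is optimal. If $a_j\le 0$, the coefficient is nonpositive, so the upper endpoint $\overline{y}_{\omega,j}$ is optimal. When $a_j=0$ or $\hat\sigma=0$, every point is optimal, and in particular either endpoint is. This is consistent with both clauses of the statement, read as ``an optimal solution can be chosen with''. Assembling the coordinates gives the optimal $\bs{y}^*_\omega$.
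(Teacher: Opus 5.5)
Your proposal is correct and follows essentially the same argument as the paper: with $\hat\sigma\ge 0$, the inner objective in \eqref{eq: Lagrangian-nonconvex} is linear in $\bs{y}_\omega$ over the box, so each coordinate sits at its lower or upper bound according to the sign of $(\bs{x}-\hat{\bs{x}})^\top\bs{l}_{\omega,j}$. Your justification that the box can be imposed without breaking the validity and tightness of the Lagrangian bound goes beyond what the paper's short proof spells out, as does your reading of the tie case ($a_j=0$ or $\hat\sigma=0$) as ``an optimal solution can be chosen''; both are sound.
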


\subsection{A Disjunctive Cutting-Plane Method}

A general framework of the disjunctive cutting-plane scheme to solve \eqref{eq: DRO_Obj} is given in Algorithm \ref{alg: DD-nonconvex-recourse}. 
one can form a restricted master problem for \eqref{eq: DRO_Obj} after $t$ iterations as follows: 
\begin{equation}
\label{eq: DD-nonconvex-recourse-master}
z^{t}=\min_{(\bs{x},\bs{\lambda}, \bs{\theta}) \in \Cs{S}^{t}} \  \bs{c}^{\top} \bs{x}+ \bs{x}^{\top} \bs{C} \bs{x} + \bs{\lambda}^{\top}\bs{\vartheta}(\bs{x})
		+ \sum_{\omega \in [N]} \theta_{\omega},
\end{equation}
where 
\begin{equation*}
    \Cs{S}^{t}= \left\lbrace (\bs{x},\bs{\lambda}, \bs{\theta}) \middle\vert
    \begin{array}{l}
        \bs{x}  \in \Cs{X}, \; \bs{\lambda} \in \Cs{L}, \\
        \bs{x}^{\top} \bs{\alpha}_{\bs{x},\omega}^{k} + \bs{\lambda}^{\top} \bs{\alpha}_{\bs{\lambda},\omega}^{k}  + \theta_{\omega} \alpha_{\theta,\omega}^{k}  \ge \rho_{\omega}^{k}, \; \omega \in [N], \; k \in \{0\} \cup [t-1]
    \end{array}
    \right\rbrace.
\end{equation*}
Here, $(\bs{x}^{t}, \bs{\lambda}^{t}, \bs{\theta}^{t})$ is a solution obtained from solving \eqref{eq: DD-nonconvex-recourse-master} at iteration $t$, and  \linebreak $(\bs{\alpha}_{\bs{x},\omega}^{k}, \bs{\alpha}_{\bs{\lambda},\omega}^{k}, \alpha_{\theta,\omega}^{k}, \rho_{\omega}^{k})$ are the corresponding cut coefficients, derived via Algorithm \ref{alg: SepCuts}, $\omega \in [N]$ and $k \in \{0\} \cup [t-1]$. 
Note that $\Cs{S}^{0}$ is the initial set, with no optimality cuts. 

\begin{algorithm}[!tb] 
    \SetAlgoLined
        \KwIn{An initial solution   $(\bs{x}^{0},\bs{\lambda}^{0}, \bs{\theta}^{0}) \in \Cs{X}\times\Cs{L}\times\Bs{R}^{N}$ and $\epsilon>0$ for the optimality tolerance.}
	    \KwOut{An $\epsilon$-optimal solution and the $\epsilon$-optimal value.}
	
		Initialization: Set $t  \gets 1$, $\Omega \gets \emptyset$, $\mathrm{LB} \gets - \infty$, $\mathrm{UB} \gets + \infty$. Add initial cuts (e.g., $\theta_{\omega} \ge 0$), if available, to $\Cs{S}^{0}$.

		\While{$\mathrm{UB} - \frac{\epsilon}{2}>  \mathrm{LB} $ }{ 
		    \For{\textbf{each} $\omega \in [N]$} 
		    {   
		        Obtain $G_{\omega}(\bs{x}^{t-1}, \bs{\lambda}^{t-1})$ by solving \eqref{eq: Q_k_nonconvex-recourse} and $(\bs{\pi}_{\omega}^{t-1}, \sigma_{\omega}^{t-1})$ by solving \eqref{eq: Lagrangian}  at $(\bs{x}^{t-1}, \bs{\lambda}^{t-1})$. \label{line: duals}
          
                Call the procedure \texttt{SepCuts}$(\bs{x}^{t-1}, \bs{\lambda}^{t-1}, \theta^{t-1}_{\omega}; \bs{\pi}_{\omega}^{t-1}, \sigma_{\omega}^{t-1})$ to obtain $(\texttt{viol}, \bs{\alpha}_{\bs{x},\omega}, \bs{\alpha}_{\bs{\lambda},\omega}, \alpha_{\theta,\omega}, \rho_{\omega})$. \label{line: cut}
		        
		        \If{\texttt{viol}$=$TRUE}{
                        $\Omega \gets \Omega \cup \{\omega\}$.
                    }
		}
		   
		        $\mathrm{UB} \gets \min\{\mathrm{UB}, \bs{c}^{\top} \bs{x}^{t-1}+ (\bs{x}^{t-1})^{\top} \bs{C} \bs{x}^{t-1}  + \bs{\vartheta}(\bs{x}^{t-1})^{\top}\bs{\lambda}^{t -1}  + \sum_{\omega \in [N]} G_{\omega}(\bs{x}^{t-1}, \bs{\lambda}^{t-1})\}$.

		    Let  $\Cs{S}^{t} \gets \Cs{S}^{t-1} \cap \Set*{(\bs{x},\bs{\lambda}, \bs{\theta})}{ \bs{\alpha}_{\bs{x},\omega}^{\top} \bs{x} +\bs{\alpha}_{\bs{\lambda},\omega}^{\top} \bs{\lambda} + \alpha_{\theta,\omega} \theta_{\omega} \ge \rho_{\omega},  \; \omega \in \Omega}$.

		    Solve restricted master problem \eqref{eq: DD-nonconvex-recourse-master} using $\Cs{S}^{t}$ and obtain an $\frac{\epsilon}{2}$-optimal solution $(\bs{x}^{t}, \bs{\lambda}^{t}, \bs{\theta}^{t})$. \label{line: iterates}
		    
		    Let  $\mathrm{LB} \gets \bs{c}^{\top} \bs{x}^t+ (\bs{x}^t)^{\top} \bs{C} \bs{x}^t + \bs{\vartheta}(\bs{x}^t)^{\top} \bs{\lambda}^{t} 
		+ \sum_{\omega \in [N]} \theta_{\omega}^{t}$.
		    
		    Set $t\gets t+1$, $\Omega \gets \emptyset$.
		    
		}
		\Return $(\bs{x}^{t-1}, \bs{\lambda}^{t-1})$ and $\mathrm{UB}$.
		
		\caption{Disjunctive cutting-plane algorithm for problem \eqref{eq: DRO_Obj}.}
  \label{alg: DD-nonconvex-recourse}
\end{algorithm}

We now explain how to obtain a disjunctive cut given a solution $\hat{\bs{x}} \in \Cs{X}$ and $\hat{\bs{\lambda}} \in \Cs{L}$. Let $\hat{\bs{y}}_{\omega}$ be a combination of lower and upper bounds on variables $\bs{y}$ that attains Lagrangian dual function \eqref{eq: Lagrangian-nonconvex} (Lemma \ref{lem: optimality_conditions}). 
Thus, 
    $\theta_{\omega} \ge -\hat{\sigma}   
 \bs{\lambda}^{\top} \bs{g}(\bs{\xi}_{\omega}) +  \hat{\bs{\pi}}^{\top} (\bs{B}_{\omega}\bs{x}+\bs{b}_{\omega}) + \hat{\sigma}  (\bs{x} - \hat{\bs{x}})^{\top} \bs{L}_{\omega}\hat{\bs{y}}_{\omega}$
is a valid linear inequality for \eqref{eq: DD-nonconvex-recourse-reform} on the disjunct
\begin{equation*}
    \left\lbrace \bs{x} \middle\vert
    \begin{array}{l}
        (\bs{x} - \hat{\bs{x}})^{\top} \bs{l}_{\omega,j} \ge 0, \; j \in \Cs{J}^{+}_{\omega}, \\
        (\bs{x} - \hat{\bs{x}})^{\top} \bs{l}_{\omega,j} \le 0, \; j \in \Cs{J}^{-}_{\omega} \\
    \end{array}
    \right\rbrace,
\end{equation*}
where $\Cs{J}^{+}_{\omega}:=\{j : \hat{y}_{\omega,j}= \underline{y}_{\omega,j}\}$ and $\Cs{J}^{-}_{\omega}:=\{j : \hat{y}_{\omega,j}= \overline{y}_{\omega,j}\}$. 
Let $\Cs{Q}$ be a finite index set, enumerating all combinations of such lower and upper bounds on variables $\bs{y}$. 
Appending the subscript $q \in \Cs{Q}$ to $\hat{\bs{y}}_{\omega}$, $\Cs{J}^{+}_{\omega}$, and $\Cs{J}^{-}_{\omega}$, we define 
\begin{equation*}
    \Cs{K}_{\omega,q}:= \left\lbrace (\bs{x}, \bs{\lambda}, \theta_{\omega}) \middle\vert
    \begin{array}{l}
        \bs{A} \bs{x} \ge \bs{d},  \; \bs{x}  \ge \bs{0}, \; \bs{\lambda} \ge \bs{0}, \\ 
        \theta_{\omega} \ge - \hat{\sigma}   
         \bs{\lambda}^{\top}  \bs{g}(\bs{\xi}_{\omega})  +  \hat{\bs{\pi}}^{\top} (\bs{B}_{\omega}\bs{x}+\bs{b}_{\omega}) + \hat{\sigma}  (\bs{x} - \hat{\bs{x}})^{\top} \bs{L}_{\omega}\hat{\bs{y}}_{\omega,q},\\
        (\bs{x} - \hat{\bs{x}})^{\top} \bs{l}_{\omega,j} \ge 0, \; j \in \Cs{J}^{+}_{\omega,q}, \; 
        (\bs{x} - \hat{\bs{x}})^{\top} \bs{l}_{\omega,j} \le 0, \; j \in \Cs{J}^{-}_{\omega,q} \\
    \end{array}
    \right\rbrace. 
\end{equation*}
As by Lemma \ref{lem: optimality_conditions}, the Lagrangian dual function \eqref{eq: Lagrangian-nonconvex} attains its optimal solution at $\hat{\bs{y}}_{\omega,q}$ for some $q \in \Cs{Q}$, the feasible region of \eqref{eq: DD-nonconvex-recourse-reform}  is a subset of the disjunctive set  
    $\Cs{T}_{\omega}:=\bigcup_{q \in \Cs{Q}} \Cs{K}_{\omega,q}$. 

Consequently, a valid inequality for $\conv{\Cs{T}_{\omega}}$ may be represented in the form \cite[Theorem~3.1]{balas1998disjunctive}
\begin{equation}
    \label{eq: disjunctive-cut}
    \bs{\alpha}_{\bs{x},\omega}^{\top} \bs{x} +\bs{\alpha}_{\bs{\lambda},\omega}^{\top} \bs{\lambda} + \alpha_{\theta,\omega} \theta_{\omega} \ge \rho_{\omega}, 
\end{equation}
where $(\bs{\alpha}_{\bs{x},\omega}, \bs{\alpha}_{\bs{\lambda},\omega}, \alpha_{\theta,\omega}, \rho_{\omega})$ is an element of the reverse polar cone of $\conv{\Cs{T}_{\omega}}$, i.e., the cone characterizing all valid inequalities for $\conv{\Cs{T}_{\omega}}$, denoted by $\Cs{W}_{\omega}$ (see Section~\ref{sec: EC_disjunctive} for its description). Moreover, \eqref{eq: disjunctive-cut} is a facet of $\conv{\Cs{T}_{\omega}}$ only if $(\bs{\alpha}_{\bs{x},\omega}, \bs{\alpha}_{\bs{\lambda},\omega}, \alpha_{\theta,\omega}, \rho_{\omega})$ is an extreme ray of $\Cs{W}_{\omega}$. 
Such a facet can be identified by solving the following cut-generation LP (CGLP): 
\begin{equation}
\label{eq: CGLP}
    \min_{(\bs{\alpha}_{\bs{x},\omega}, \bs{\alpha}_{\bs{\lambda},\omega}, \alpha_{\theta,\omega}, \rho_{\omega}) \in \Cs{W}_{\omega}} \ \bs{\alpha}_{\bs{x},\omega}^{\top} \hat{\bs{x}} +\bs{\alpha}_{\bs{\lambda},\omega}^{\top} \hat{\bs{\lambda}} + \alpha_{\theta,\omega}^{\top} \hat{\theta}_{\omega} - \rho_{\omega}.
\end{equation}
If the optimal value of \eqref{eq: CGLP} is negative and $(\bs{\alpha}_{\bs{x},\omega}, \bs{\alpha}_{\bs{\lambda},\omega}, \alpha_{\theta,\omega}, \rho_{\omega})$ is an optimal solution, then a disjunctive inequality in the form of \eqref{eq: disjunctive-cut} is valid for $\conv{\Cs{T}_{\omega}}$, which cuts off $(\hat{\bs{x}},\hat{\bs{\lambda}}, \hat{\theta}_{\omega})$. 
Otherwise, $(\hat{\bs{x}},\hat{\bs{\lambda}}, \hat{\theta}_{\omega}) \in \conv{\Cs{T}_{\omega}}$. This separation routine is described in Algorithm \ref{alg: SepCuts}, with further details relegated to Section~\ref{sec: EC_disjunctive}. 

\begin{algorithm}[!tb] 
    \SetAlgoLined
        \KwIn{$(\hat{\bs{x}}, \hat{\bs{\lambda}}, \hat{\theta}_{\omega})$ and $(\hat{\bs{\pi}},\hat{\sigma})$.}
	    \KwOut{$(\texttt{viol}, \bs{\alpha}_{\bs{x},\omega}, \bs{\alpha}_{\bs{\lambda},\omega}, \alpha_{\theta,\omega}, \rho_{\omega})$. If a  valid inequality $\bs{\alpha}_{\bs{x},\omega}^{\top} \bs{x} +\bs{\alpha}_{\bs{\lambda},\omega}^{\top} \bs{\lambda} + \alpha_{\theta,\omega} \theta_{\omega} \ge \rho_{\omega}$ is found that is violated by $(\hat{\bs{x}}, \hat{\bs{\lambda}}, \hat{\theta}_{\omega})$, then return \texttt{viol}$=$TRUE, 
	    $\bs{\alpha}_{\bs{x},\omega}$, $\bs{\alpha}_{\bs{\lambda},\omega}$, $\alpha_{\theta,\omega}$, and $\rho_{\omega}$.
	Otherwise, return \texttt{viol}$=$FALSE, $\bs{\alpha}_{\bs{x},\omega}=\bs{0}$, $\bs{\alpha}_{\bs{\lambda},\omega}=\bs{0}$, $\alpha_{\theta,\omega}=0$, and $\rho_{\omega}=0$. }
	
		Initialization: \texttt{viol} $\gets$ FALSE, $\bs{\alpha}_{\bs{x},\omega}=\bs{0}$, $\bs{\alpha}_{\bs{\lambda},\omega}=\bs{0}$, $\alpha_{\theta,\omega}=0$, and $\rho_{\omega}=0$. 
       
	Given $(\hat{\bs{\pi}},\hat{\sigma})$, form $\Cs{W}_{\omega}$ as in \eqref{eq: projectioncone}, with a proper a normalization constraint.

    Let $(\bs{\alpha}^*_{\bs{x},\omega}, \bs{\alpha}^*_{\bs{\lambda},\omega}, \alpha^*_{\theta,\omega}, \rho^{*}_{\omega})$ be an optimal solution to the CGLP $\hat{z}=\min\limits_{(\bs{\alpha}_{\bs{x},\omega}, \bs{\alpha}_{\bs{\lambda},\omega}, \alpha_{\theta,\omega}, \rho_{\omega}) \in \Cs{W}_{\omega}} \ \bs{\alpha}_{\bs{x},\omega}^{\top} \hat{\bs{x}} +\bs{\alpha}_{\bs{\lambda},\omega}^{\top} \hat{\bs{\lambda}} + \alpha_{\theta,\omega}^{\top} \hat{\theta}_{\omega} - \rho_{\omega}$.

    \If{$\hat{z}<0$}{
                \texttt{viol} $\gets$ TRUE and let $\bs{\alpha}_{\bs{x},\omega}=\bs{\alpha}^*_{\bs{x},\omega}$, $\bs{\alpha}_{\bs{\lambda},\omega}=\bs{\alpha}^*_{\bs{\lambda},\omega}$, $\alpha_{\theta,\omega}=\alpha^*_{\theta,\omega}$, and $\rho_{\omega}=\rho^{*}_{\omega}$.
        }

		\caption{\texttt{SepCuts}$(\hat{\bs{x}}, \hat{\bs{\lambda}}, \hat{\theta}_{\omega}; \hat{\bs{\pi}},\hat{\sigma})$.}
  \label{alg: SepCuts}
\end{algorithm}

\begin{theorem}
    \label{thm: DD-finite-convergence-nonconvex}
    Suppose that Assumption 
    {\bf (A4)} holds. 
    For $\omega \in [N]$, suppose that  $h(\bs{x}, \bs{\xi}_{\omega})$ is a nonconvex recourse function defined in \eqref{eq: recourse}, via objective function as in \eqref{eq: nonconvex-recourse}. 
    Then, Algorithm \ref{alg: DD-nonconvex-recourse} generates an $\epsilon$-optimal solution to  \eqref{eq: DRO_Obj}   in a finite number of iterations. 
\end{theorem}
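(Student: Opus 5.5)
The plan is to prove finite convergence by combining three ingredients: validity of the cuts (so LB is always a true lower bound), finiteness of the family of distinct cuts that can be generated, and a \emph{no-repetition} argument showing that once no new cut is produced, UB and LB are within $\epsilon/2$. For validity, we use Theorem \ref{thm: simple-moment-inner-lag-dual}, Corollary \ref{cor: reform-recourse}, and Lemma~\ref{lem: boundedness_dual}, which show that \eqref{eq: DD-nonconvex-recourse-reform} is equivalent to \eqref{eq: DRO_Obj}. By Proposition \ref{prop: Lagrangian} and Lemma \ref{lem: optimality_conditions}, each feasible point $(\bs{x},\bs{\lambda},\theta_\omega)$ of \eqref{eq: DD-nonconvex-recourse-reform} with $\theta_\omega \ge G_\omega(\bs{x},\bs{\lambda}) \ge Q_\omega(\bs{x},\bs{\lambda},\hat\sigma,\hat{\bs{\pi}})$ lies in some $\Cs{K}_{\omega,q}$, hence in $\conv{\Cs{T}_\omega}$. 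Therefore every cut \eqref{eq: disjunctive-cut} from Algorithm \ref{alg: SepCuts} is valid, $\Cs{S}^t$ is a relaxation, and by (A4) $\mathrm{LB} \le z^* + \epsilon/2$ is not the issue; rather $\mathrm{LB} - \epsilon/2 \le z^t \le z^*$ at every iteration. UB is trivially an upper bound, since it evaluates the true objective at a feasible $(\bs{x}^{t-1},\bs{\lambda}^{t-1})$.

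For finiteness, we argue that only finitely many cuts can ever be generated. The dual multipliers $(\hat\sigma,\hat{\bs{\pi}})$ returned in line \ref{line: duals} can be taken as vertices of the polyhedron in \eqref{eq: Lagrangian}. This polyhedron depends on $\bs{x}$ through the term $\sigma\bs{x}^\top\bs{L}_\omega$, so we cannot simply count vertices globally. Instead we use the bounds $\ul{\bs{y}}_\omega,\ol{\bs{y}}_\omega$ (finite by (A3)) and the finite index set $\Cs{Q}$, together with the fact that $\Cs{W}_\omega$ is described by the finitely many disjuncts $\Cs{K}_{\omega,q}$. The CGLP with a fixed normalization returns an extreme ray of $\Cs{W}_\omega$, and $\Cs{W}_\omega$ depends only on the data $(\hat{\bs{x}},\hat\sigma,\hat{\bs{\pi}})$.

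This is the main obstacle. With mixed-integer $\bs{x}$, the point $\hat{\bs{x}}$ ranges over a continuum, so finiteness must come from elsewhere. I would try to show that $\conv{\Cs{T}_\omega}$ coincides, on $\Cs{X}\times\Cs{L}$, with the epigraph of $G_\omega$ locally around $(\hat{\bs{x}},\hat{\bs{\lambda}})$. Concretely, the cut is tight there: the CGLP either cuts off $(\hat{\bs{x}},\hat{\bs{\lambda}},\hat\theta_\omega)$ or certifies $\hat\theta_\omega \ge G_\omega(\hat{\bs{x}},\hat{\bs{\lambda}})$, since at $\bs{x}=\hat{\bs{x}}$ the bilinear term vanishes and the cut's value equals $Q_\omega(\hat{\bs{x}},\hat{\bs{\lambda}},\hat\sigma,\hat{\bs{\pi}}) = G_\omega(\hat{\bs{x}},\hat{\bs{\lambda}})$ by zero duality gap. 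To get finiteness in the continuous part, I would appeal to the piecewise-bilinear structure. For $\bs{x}$ restricted to a fixed sign pattern of $\{(\bs{x}-\hat{\bs{x}})^\top\bs{l}_{\omega,j}\}$ and a fixed optimal basis of \eqref{eq: Q_k_nonconvex-recourse}, $G_\omega$ is given by one of finitely many bilinear pieces. The vertex $(\hat\sigma,\hat{\bs{\pi}})$ is then determined by a basis, and cuts generated within the same piece at distinct points dominate each other via the disjunctive hull. In this way the cut family is indexed by (basis, $q$) pairs, up to finitely many distinct extreme rays of $\Cs{W}_\omega$ per pair.

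Finally, we show that the algorithm cannot cycle without terminating. Suppose that at iteration $t$ no scenario yields a violated cut, so $\Omega=\emptyset$. Then by the tightness above, $\theta^{t-1}_\omega \ge G_\omega(\bs{x}^{t-1},\bs{\lambda}^{t-1})$ for all $\omega$, whence $\mathrm{UB} \le \mathrm{LB}$ and the loop stops. Otherwise a new cut is added that excludes the current iterate. Because there are only finitely many distinct cuts and no cut is generated twice (a previously added cut is already satisfied by all subsequent iterates), the case $\Omega=\emptyset$ must occur after finitely many iterations. Combining $\mathrm{UB}-\epsilon/2 \le \mathrm{LB}$ with $\mathrm{LB} \le z^* + \epsilon/2$, the returned solution has value at most $z^*+\epsilon$, so it is $\epsilon$-optimal for \eqref{eq: DRO_Obj}.
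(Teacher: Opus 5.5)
Your validity argument and the final $\epsilon$-bookkeeping are fine, but the finiteness mechanism fails. When $\Cs{X}$ has continuous components (e.g.\ the prices $\bs{r}$ in the newsvendor), there is no finite family of cuts, for three reasons:
\begin{itemize}
\item For a fixed basis of \eqref{eq: Lagrangian}, the vertex $(\hat\sigma,\hat{\bs{\pi}})$ moves continuously with $\hat{\bs{x}}$, because the dual constraint contains $\sigma\hat{\bs{x}}^\top\bs{L}_\omega$.
\item The cone $\Cs{W}_\omega$ in \eqref{eq: projectioncone} involves $\hat{\bs{x}}^\top\bs{l}_{\omega,j}$ and $\hat\sigma\hat{\bs{x}}^\top\bs{L}_\omega\hat{\bs{y}}_{\omega,q}$.
\item The hyperplanes defining the disjuncts pass through $\hat{\bs{x}}$.
\end{itemize}
Cuts generated at different points of one (basis, $q$) piece behave like supporting planes at different points of a curved surface. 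In general they do not dominate each other, so ``no cut is generated twice'' bounds nothing. A warning sign is that your argument never uses $\epsilon>0$: it would give exact finite termination even though $G_\omega$ is not piecewise linear in $\bs{x}$. What you wrote is essentially the pure-binary argument (finitely many $\hat{\bs{x}}$), which is precisely the setting the paper wants to go beyond.

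The paper never counts cuts. Assuming the loop runs forever, it proceeds as follows:
\begin{itemize}
\item It extracts convergent subsequences of the iterates (compactness of $\Cs{X}\times\Cs{L}$, boundedness of $\bs{\theta}^t$) and of the multipliers (Lemma~\ref{lem: G_properties}).
\item Upper semicontinuity of $\Cs{D}_\omega$ makes the limit multipliers optimal at $(\overline{\bs{x}},\overline{\bs{\lambda}})$.
\item It evaluates the cut from iterate $t$ at iterate $t+1$, so the $q$-dependent term $\sigma^t_\omega(\bs{x}^{t+1}-\bs{x}^t)^\top\bs{L}_\omega\bs{y}_{\omega,q}$ vanishes in the limit for every $q$. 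This gives $\overline{\theta}_\omega\ge G_\omega(\overline{\bs{x}},\overline{\bs{\lambda}})$.
\item Semicontinuity of $G_\omega$ and the $\epsilon/2$ slack then yield $\theta^t_\omega\ge G_\omega(\bs{x}^t,\bs{\lambda}^t)-\epsilon/2$ for large $t$.
\end{itemize}
Finiteness there comes from $\epsilon$, not from a finite cut family.

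Your termination step also rests on a claim that is false as stated: that if the CGLP finds no violated cut, then $\hat\theta_\omega\ge G_\omega(\hat{\bs{x}},\hat{\bs{\lambda}})$. The CGLP separates from $\conv{\Cs{T}_\omega}$, not from $\Cs{T}_\omega$. Here $\Cs{T}_\omega$ is the epigraph, over $\{\bs{A}\bs{x}\ge\bs{d},\ \bs{x}\ge\bs{0}\}\times\Bs{R}^{s}_{+}$, of $-\hat\sigma\bs{\lambda}^\top\bs{g}(\bs{\xi}_\omega)+\hat{\bs{\pi}}^\top(\bs{B}_\omega\bs{x}+\bs{b}_\omega)+\hat\sigma\min_{\ul{\bs{y}}_\omega\le\bs{y}\le\ol{\bs{y}}_\omega}(\bs{x}-\hat{\bs{x}})^\top\bs{L}_\omega\bs{y}$. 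That function is concave in $\bs{x}$. Its hull is the epigraph of the convex envelope, which can lie strictly below $G_\omega(\hat{\bs{x}},\hat{\bs{\lambda}})$ when $\hat{\bs{x}}$ is not an extreme point of the relaxation.

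For example, take scalar $x\in[0,2]$, $\hat x=1$, $\bs{L}_\omega=1$, $y\in[0,1]$ and $\hat\sigma>0$. Averaging the point of the upper-bound disjunct at $x=0$ with the point of the lower-bound disjunct at $x=2$ gives $(\hat x,\hat{\bs{\lambda}},G_\omega(\hat x,\hat{\bs{\lambda}})-\hat\sigma/2)\in\conv{\Cs{T}_\omega}$. Your observation that the bilinear term vanishes at $\bs{x}=\hat{\bs{x}}$ controls only points of $\Cs{T}_\omega$ itself, not convex combinations of points with $\bs{x}\neq\hat{\bs{x}}$.

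The paper's limit step leans on the same thing. It reads the accumulated cut as saying that the next iterate satisfies the inequality of \emph{some} disjunct $\Cs{K}_{\omega,q}$, which is membership in $\Cs{T}_\omega$ rather than in its hull. So this point needs separate justification in either route. The hull is tight at $\hat{\bs{x}}$ when $\hat{\bs{x}}$ is an extreme point of the relaxation (e.g.\ binary $\bs{x}$), which is again the case where your enumeration argument would go through.
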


\begin{remark}
    When integer variables are present in the first stage, i.e., $n_{1}>0$, a branch-and-cut procedure can be employed, where a linear relaxation of the master problem is solved using a standard branch-and-bound algorithm. Then, a globally valid optimality cut is obtained whenever an integer feasible solution to the restricted master problem violates any of the previously generated optimality cuts. By the finiteness of the branch-and-bound, it is straightforward to prove the finite convergence of this adjusted version of Algorithm \ref{alg: DD-nonconvex-recourse}. 
Moreover, while Algorithm \ref{alg: DD-nonconvex-recourse} is presented for a DRO problem with a decision-dependent ambiguity set, it can be used to solve DRO problems with a decision-independent ambiguity set, i.e., $\vartheta(\bs{x})=\bs{\vartheta}$. In this case, problem \eqref{eq: reform-recourse} reduces to a two-stage stochastic MIP. 
Thus, Algorithm \ref{alg: DD-nonconvex-recourse} needs to be modified by just changing $\vartheta(\bs{x})$ to $\bs{\vartheta}$. \hfill	$\blacksquare$
\end{remark}

\begin{remark}
When the recourse function $h(\cdot,\bs{\xi})$, defined  in \eqref{eq: recourse}, is a convex function, e.g., when $\psi_{0}(\bs{x}, \bs{y}, \bs{\xi})=\bs{q}^{\top}\bs{y}$, one can obtain Benders-type cut for $G_{\omega}(\bs{x},\bs{\lambda})$, defined in \eqref{eq: Q_k}. More precisely, it can be shown that $G_{\omega}(\bs{x},\bs{\lambda})$ is convex on $(\bs{x},\bs{\lambda})$ for $\omega \in [N]$. Consequently, by exploiting the convex structure of $G_{\omega}(\bs{x},\bs{\lambda})$, one can iteratively obtain its outer approximations using subgradient inequality. 
As a result, the standard finite-convergence argument of two-stage SPs, combined with Assumption {\bf (A4)}, leads to convergence to an $\epsilon$-optimal solution to  \eqref{eq: DRO_Obj}  in a finite number of iterations. We skip the details for brevity. \hfill	$\blacksquare$
\end{remark}

We end this section with a remark on how Algorithm \ref{alg: DD-nonconvex-recourse} is of independent interest to 
solve general two-stage SPs when the cost vector---or equivalently, the recourse matrix---in the second-stage problem is simultaneously (linearly) bi-parameterized by the first-stage decision variables and uncertain parameters. 
\begin{remark}
\label{rem: two-stage}
Following similar ideas as those presented to develop Algorithm \ref{alg: DD-nonconvex-recourse}, a modified algorithm may be developed to find an $\epsilon$-optimal solution to a two-stage stochastic MIP with a nonconvex recourse as:
\begin{equation*}
\min_{\bs{x} \in \Cs{X} } \ \ep{\bs{p}}{h(\bs{x},\bs{\xi})},
\end{equation*} 
where 
    $h(\bs{x},\bs{\xi}_{\omega})= \min_{\bs{y}} \{  (\bs{q}_{\omega} + \bs{L}_{\omega}^{\top}  \bs{x} )^{\top}  \bs{y} : \bs{D}_{\omega}\bs{y} \ge    \bs{B}_{\omega}\bs{x} + \bs{b}_{\omega}, \; 
    \bs{y} \ge \bs{0} \}$,
or equivalently, 
\begin{equation}
\label{eq: nonconvex}
\begin{aligned}
    h(\bs{x},\bs{\xi}_{\omega})= \min_{\bs{y}, \eta} \{\eta :
            \eta -(\bs{q}_{\omega} + \bs{L}_{\omega}^{\top}  \bs{x} )^{\top}  \bs{y} \ge  \bs{0}, 
            \; \bs{D}_{\omega}\bs{y} \ge    \bs{B}_{\omega}\bs{x} + \bs{b}_{\omega}, \; 
            \bs{y} \ge \bs{0} \}. 
\end{aligned}
\end{equation}
Note that $G_{\omega}(\bs{x},\bs{\lambda})$, defined in \eqref{eq: Q_k_nonconvex-recourse}, has the same structure as \eqref{eq: nonconvex}. 
We skip the details of the modified algorithm for brevity. 
\hfill	$\blacksquare$
\end{remark}


\section{Numerical Experiments}
\label{sec: numerical}

We assess the quality of the resulting DRO solutions and provide 
computational comparative results to test the efficacy of our solution methods against solving the extensive formulation using a commercial nonconvex solver. 
All algorithms were implemented in Python 3.12 and solved by GUROBI 12.0.0 as a nonconvex solver. 
All experiments were conducted on a Linux Ubuntu 20.04 system running on a PC equipped with an Intel Core i7-9700 processor at 3.00 GHz and 32 GB of RAM. All codes are available on \url{https://github.com/hamedrahimian/DDRO}. 
In this section, we present the results for the multiproduct newsvendor problem, introduced in Section \ref{sec: NV_app}, while the results for the facility location problem are relegated to \ref{sec: FL}. 

\subsection{Experiment Design}

Consider the newsvendor problem introduced in Section \ref{sec: NV_app}. Note that $h(\bs{q}, \bs{r}, \bs{\xi})$ is in the form of problem \eqref{eq: recourse} with an objective function in the form of \eqref{eq: nonconvex-recourse} upon excluding first-stage costs, yielding a nonconvex recourse function. Hence, Algorithm \ref{alg: DD-nonconvex-recourse} is applicable to solve this problem. 
We assume that for $i \in [n]$, (i) $c_i - g_i >0$ and (ii) $c_{i} < \underline{r}_i$, and (iii) $b_i + \underline{r}_i > c_i $. We note that (i) and (iii) ensure that the critical ratio $0 < \beta_i= \frac{c_i - g_i}{b_i - g_i + \underline{r}_{i}}<1$ is well defined. 
In light of Lemma \ref{lem: boundedness_dual}, we have 
$\Cs{L}=\{\bs{\lambda} : 0 \le \lambda_\iota \le  
\max\{( \overline{r} - \underline{g}) (\overline{q} -\underline{\xi}), \overline{b}(\overline{\xi} - \underline{q})\}, \; \iota \in [s] \}$, where 
$\underline{g}=\min_{ i \in [n]} g_i$, $\overline{r}=\max_{ i \in [n]} r_i$, 
$\overline{b}=\max_{ i \in [n]} b_i$, $\overline{\xi}=\max_{ i \in [n], \omega \in [N]} \xi_i^{\omega}$, $\underline{\xi}=\min_{ i \in [n], \omega \in [N]} \xi_i^{\omega}$, $\overline{q}=\max_{ i \in [n]} d/c_i$, and $\underline{q}=\min_{ i \in [n]} d/c_i$. 
Moreover, in light of the discussion in Section \ref{sec: alg_nonconvex}, variables $y_i^+$ and $y_i^-$ are bounded, i.e., $0 \le y_i^+ \le q_i \le \overline{q}$ and $0 \le y_i^- \le \xi_i$. Finally, due to the bilinear term $r_i y_i^+$ in the objective of \eqref{eq: NV_recourse}, an optimal solution of the Lagrangian dual function in the form of \eqref{eq: Lagrangian-nonconvex} only depends on $y_i^+$.

To conduct experiments, we first generated  $\bs{t}=(\bs{c}^{\top}, \bs{g}^{\top}, \bs{b}^{\top}, \underline{\bs{r}}^{\top}, \overline{\bs{r}}^{\top})^{\top}$, where $c \sim \textrm{Uniform}(0.14,0.34)$, $g \sim \textrm{Uniform}(0.02,0.12)$, $b \sim \textrm{Uniform}(0.18,0.38)$, 
$\underline{r} \sim \textrm{Uniform}(0.4,0.6)$, and $\overline{r} \sim \textrm{Uniform}(0.7,0.9)$. 
For $j \in [n]$, we formed $w_{ij}:=\exp{(-\|t_i-t_j\|}$ for $i \neq j$ and $w_{ij}:=-0.1$ for $i=j$, normalized these weights over $i \in [n]$ to obtain $\tilde{w}^{\mu}_{ij}$. 
We set $u^\mu_{ij}=\rho_{\mu} \tilde{w}^{\mu}_{ij}$.
Next, we calculated $\tilde{w}^\sigma_{ij}=\tilde{w}^\mu_{ij}-\frac{1}{2}\min_{k \in [n]} \tilde{w}^\mu_{kj}$, $i\in[n]$. We then set $u^\sigma_{ij}=\rho_{\sigma} \tilde{w}^{\sigma}_{ij}$. Parameters  $\rho_{\mu}$ and $\rho_{\sigma}$ are tunable and control the {\it degree} of decision-dependency, where $\rho_{\sigma} \le 1$ so that $\sum_{i \in [n]}u^\sigma_{ij}<1$. 
By construction, $w_{ij}$ measures the similarity or substitutability of products. This design signifies that increasing prices of more {\it similar} alternative products increases the mean demand, while decreasing its noise. 

For each training set, we chose the vector of nominal mean of the random demand $\bs{\xi}$, $\overline{\bs{\mu}}$, independently from  $\text{Uniform}(10,20)$ and set $\overline{\bs{\sigma}}=\gamma \overline{\bs{\mu}}$, where $\gamma$ is a tunable parameter to control the coefficient of variation.
Realizations of the random demand $\xi_j$, $j \in [m]$, were generated independently from a folded normal distribution with mean $\overline{\mu}_j$ and standard deviation $\overline{\sigma}_j$.

\subsection{Computational Results}
In this section, we compare the computational performance of Algorithm \ref{alg: DD-nonconvex-recourse}, denoted as \texttt{DECOMPOSED}, against the MINLP deterministic equivalent formulation, presented in \eqref{eq: reform-simple-moment-recourse-linearized}, which is solved using an off-the-shelf nonconvex solver and denoted as \texttt{DEF}.
We set the time limit to 3600 seconds.  
We present the computational results for $n \in \{2,3\}$ products, $d= n/2 $, hyperparameters $\tau_\mu=\underline{\tau}_\sigma=0$, $\overline{\tau}_\sigma =1 + \tau_{\sigma}$, where $\tau_{\sigma} \in \{0,1\}$ for the ambiguity set, $\rho_{\mu}=\rho_{\sigma} =1$ for the degree of decision dependency, $\gamma=1$ for the coefficient of variation, and $N \in \{500,1000,2000,3000,5000\}$. 

\begin{table}[!ht]
\centering
\scriptsize
\caption{Comparison of \texttt{DEF} and \texttt{DECOMPOSED} to solve the newsvendor problem with a price-dependent, $\texttt{DD}$, ambiguity set, with $\gamma=1$,  $\rho_{\mu}=\rho_{\sigma}=1$, and $\tau_\mu=\underline{\tau}_\sigma=0$.}
\label{T: NV_Comp_A}
\begin{tabular}{lllllll}
\toprule
  &    & & \multicolumn{2}{c}{\texttt{DECOMPOSED}} & \multicolumn{2}{c}{\texttt{DEF}} \\
  \cmidrule(lr){4-5} \cmidrule(lr){6-7} 
$\overline{\tau}_\sigma$  & $(n,d)$ & $N$    &  Gap (\%) &    Time (s) & Gap (\%) &    Time (s) \\
\midrule
1.0            &  (2,1) & 500  &        0.0 &        17.78 &          0.01 &        76.06 \\
           &   & 1000 &        0.0 &        80.41 &          0.01 &        51.84 \\
           &   & 2000 &        0.0 &        225.8 &          0.01 &       136.89 \\
           &   & 3000 &        0.0 &        254.1 &          0.01 &        281.3 \\
           &   & 5000 &        0.0 &        589.7 &          0.01 &       632.25 \\
\cmidrule{2-7}          
           & (3,1.5)  & 500  &        0.0 &        74.54 &          0.01 &       469.85 \\
           &   & 1000 &        0.0 &       234.15 &          0.02 &      1632.64 \\
           &   & 2000 &        0.0 &       554.82 &  0.01 (21.15) &  1825.09 (3) \\
           &   & 3000 &        0.0 &       929.31 &    - (8.64) &      - (5) \\
           &   & 5000 &  0.0  &   1469.6 &   - (27.79) &      - (5) \\
\midrule          
2.0            & (2,1)  & 500  &        0.0 &        16.99 &          0.01 &         8.62 \\
           &   & 1000 &        0.0 &        35.47 &          0.01 &        27.06 \\
           &   & 2000 &        0.0 &        115.7 &          0.01 &       101.71 \\
           &   & 3000 &        0.0 &       383.49 &          0.02 &       283.51 \\
           &   & 5000 &        0.0 &       281.05 &          0.02 &       414.13 \\
\cmidrule{2-7}          
           & (3,1.5)  & 500  &        0.0 &        72.03 &          0.05 &        503.4 \\
           &   & 1000 &        0.0 &       201.25 &          0.07 &      1654.14 \\
           &   & 2000 &        0.0 &       384.08 &  0.06 (22.12) &  1139.24 (2) \\
           &   & 3000 &        0.0 &       768.88 &  0.09 (17.67) &  2084.86 (3) \\
           &   & 5000 &  0.0 &  1351.46 &   - (28.58) &      - (5) \\
\bottomrule
\end{tabular}

\end{table}

Table \ref{T: NV_Comp_A} reports the average computational results (over five training sets) for DRO with decision-dependent ambiguity set, $\texttt{DD}$. 
The values under the column ``Gap (\%)'' indicate the average gap (in \%) for instances that could be solved optimally. In parentheses, the average gap for instances that could not be solved optimally within the time limit is also provided. 
Additionally, the values under the column ``Time (s)'' indicate the average time (in seconds) required for instances that could be solved optimally. In parentheses, the number of instances (out of five) that could not be solved optimally within the time limit is also provided. 
Observe from Table \ref{T: NV_Comp_A} that $\texttt{DECOMPOSED}$ found an optimal solution within the time limit for all instances. 
Whereas $\texttt{DEF}$ stopped with a nonzero optimality gap in some cases, on average between [8.64\%-28.58\%]. 
In addition, for instances that could be solved optimally within the time limit with both approaches, $\texttt{DECOMPOSED}$  often found an optimal solution with less computational effort. We observe that $\texttt{DEF}$ generally had a longer average runtime and optimality gap for instances with more scenarios and products. Moreover, an increase of $\tau_\sigma$ from 0 to 1---a wider range on the second-order moment---led to generally shorter runtimes for both approaches. 

While we have not provided the detailed computational results for the DRO problem with a price-independent ambiguity set, both $\texttt{DEF}$ and $\texttt{DECOMPOSED}$ were able to solve {\it all} problem instances within sixty seconds (on average). This observation further highlights the increased computational burden associated with solving the price-dependent DRO problem compared to its price-independent counterpart (see the computational performance profile in Section~\ref{sec: EC_NV} for additional insights). 


\subsection{Out-of-Sample Performance}
We also report the out-of-sample performance of solutions resulting from a DRO with decision-dependent, $\texttt{DD}$, and decision-independent ambiguity set, $\texttt{DI}$, for a problem instance with $n=3$ and $d=1.5$, with 
$N=100$, 
$\rho_{\mu}, \rho_{\sigma} \in \{0.3,0.5,0.7\}$ for the degree of decision dependency, and 
$\gamma \in \{0.5,0.7\}$ 
for the coefficient of variation. 
To form the ambiguity sets, we used hyperparameters $\tau_\mu=0.5$, $\underline{\tau}_\sigma=0$, and $\overline{\tau}_\sigma=1+ \tau_{\sigma}$, where $\tau_{\sigma} \in \{0, 0.2, 0.4\}$.

To report the results, we performed 25 microsimulations, i.e., 25 sets of training data of size $N$ were generated. 
To calculate the out-of-sample costs (of each pair of solutions), we used 30 common test sets of size 500, drawn from a standard normal distribution. 
Given a solution $\bs{x}=(\bs{q},\bs{r})$ (resulting from a training set), its corresponding test realizations were generated following a folded normal transformation, where the mean and variance were calculated via \eqref{NV_moments}, and the nominal mean and standard deviation were set to those of the training set.  We then calculated a normalized upper bound of a two-sided 95\% confidence interval (UCB) on the mean cost difference $(\texttt{DI} - \texttt{DD})$, where the normalization was calculated based on $100 / \textrm{mean of out-of-sample } |\texttt{DI}| \%$. 

Figs.~\ref{fig: NV_oos_0.5_100} and \ref{fig: NV_oos_0.7_100} report a two-sided 95\% confidence interval of these UCBs over 25 microsimulations. 
Observe that the out-of-sample cost difference $(\texttt{DI} - \texttt{DD})$ is positive, indicating that $\texttt{DD}$ consistently yielded a lower out-of-sample cost than \texttt{DI}. 

\paragraph{Sensitivity with respect to $\rho_{\mu}$ and $\rho_{\sigma}$.}
In Fig.~\ref{fig: NV_oos_0.5_100}, at a fixed $\rho_{\sigma}$, the slope of each curve captures the sensitivity to stronger price-dependent mean shifts through $\rho_{\mu}$, while the vertical separation between curves at a fixed $\rho_{\mu}$ captures the incremental effect of stronger price-dependent dispersion through $\rho_{\sigma}$.
Observe that, for each value of $\rho_\sigma$, the curves are increasing in $\rho_{\mu}$. Thus, the gap $(\texttt{DI}-\texttt{DD})$ becomes larger as the degree of decision dependence in the mean increases. This is consistent with the newsvendor model: increasing $\rho_{\mu}$ strengthens the substitution-driven effect of prices on demand means, so the price-independent model becomes increasingly misspecified when it evaluates stocking and pricing decisions. In contrast, \texttt{DD} internalizes how its price decisions shift the demand distribution and can better coordinate prices and stocking quantities under the induced demand pattern.
By comparison, the curves corresponding to different values of $\rho_{\sigma}$ are nearly overlapping in most panels. This indicates that, for these instances, the out-of-sample gap is much less sensitive to $\rho_{\sigma}$ than to $\rho_{\mu}$. One reason is that $\rho_{\sigma}$ affects the dispersion channel, which primarily changes tail-risk protection, whereas the revenue and shortage trade-offs in the newsvendor objective are directly affected by mean demand shifts. Moreover, since the price-dependent variance in \eqref{NV_moments} decreases with higher prices of substitutable products, changing $\rho_{\sigma}$ often moves both models in a similar conservative direction; hence, the marginal value of modeling price-dependent dispersion is smaller than the marginal value of modeling price-dependent mean demand.

\begin{figure}[!tb]
	\centering
    \includegraphics[width=0.7\linewidth]{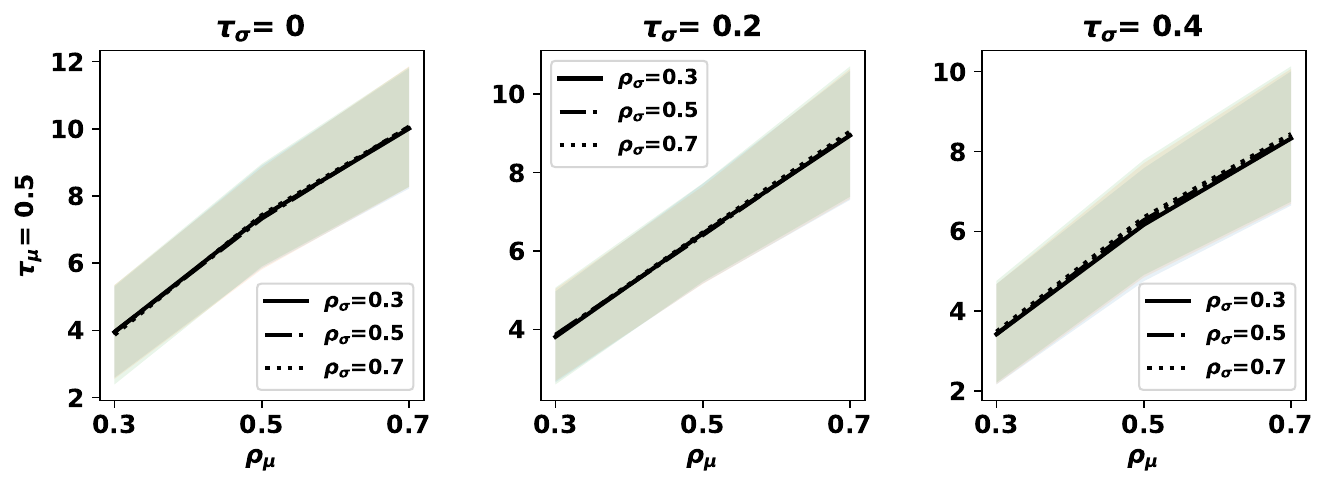}
	\caption{\label{fig: NV_oos_0.5_100} Average 95\% UCB on the mean out-of-sample difference $(\texttt{DI} - \texttt{DD})$ for the newsvendor problem with $n=3$, $d=1.5$, $\underline{\tau}_\sigma=0$, $\gamma=0.5$, and $N=100$.}
\end{figure}

\begin{figure}[!tb]
	\centering
    \includegraphics[width=0.7\linewidth]{NV_oos_3_1_0.5_100.pdf}
	\caption{\label{fig: NV_oos_0.7_100} Average 95\% UCB on the mean out-of-sample difference $(\texttt{DI} - \texttt{DD})$ for the newsvendor problem with $n=3$, $d=1.5$, $\underline{\tau}_\sigma=0$, $\gamma=0.7$, and $N=100$.}
\end{figure}

\paragraph{Sensitivity with respect to $\tau_\sigma$.} 
Fig.~\ref{fig: NV_oos_0.5_100} isolates the effect of the upper second-moment tolerance. Recall that $\tau_\sigma=0$ imposes no upper deviation from the nominal second moment, whereas $\tau_\sigma=0.2$ and $\tau_\sigma=0.4$ allow increasingly larger upper deviations and therefore more tail-risk ambiguity. Across the panels, the positive gap $(\texttt{DI}-\texttt{DD})$ persists, showing that the benefit of incorporating price-dependent ambiguity is robust to $\tau_\sigma$.
At the same time, increasing $\tau_\sigma$ does not materially change the qualitative pattern with respect to $\rho_{\mu}$ and $\rho_{\sigma}$: the curves remain upward sloping in $\rho_{\mu}$ and remain close across $\rho_{\sigma}$. This suggests that the main source of out-of-sample improvement is the ability of \texttt{DD} to track price-dependent mean demand rather than the variance channel alone. A larger $\tau_\sigma$ can make both \texttt{DD} and \texttt{DI} hedge more against high-demand tail outcomes, which may weaken the incremental value of decision-dependent dispersion. Nevertheless, because price-induced mean misspecification remains important, \texttt{DD} continues to outperform \texttt{DI} across the tested values of $\tau_\sigma$.

Figs.~\ref{fig: NV_oos_obj_0.5_100} and \ref{fig: NV_oos_obj_0.7_100} report a two-sided 95\% confidence interval on the {\it postdecision disappointment} ``out-of-sample -  in-sample,'' calculated by the difference of corresponding values of  $(\texttt{DI} - \texttt{DD})$. Observe that the postdecision disappointment difference $(\texttt{DI} - \texttt{DD})$ is consistently positive. 
Recall that DRO models are typically associated with (on average) a positive postdecision disappointment, because they often yield optimistic in-sample costs that may not realize out of sample, though still outperforming classical SP models. 
Our observations confirm this expected behavior as both \texttt{DD} and \texttt{DI} exhibit a positive postdecision disappointment. However, the decision-dependent ambiguity substantially mitigates the optimizer's curse compared to decision-independent DRO, leading to a smaller postdecision disappointment. This improvement, however, was achieved at the expense of increased computational effort, as reported in Table~\ref{T: NV_Comp_A}.

\begin{figure}[!tb]
	\centering
    \includegraphics[width=0.7\linewidth]{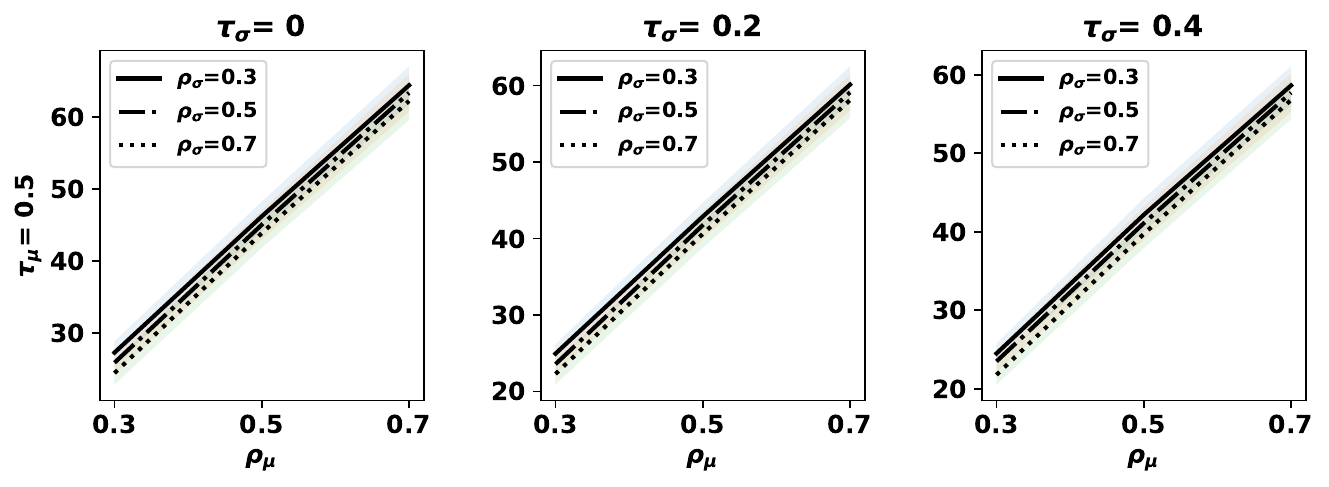}
	\caption{\label{fig: NV_oos_obj_0.5_100} 95\% UCB on the mean postdecision disappointment difference $(\texttt{DI} - \texttt{DD})$ for the newsvendor problem with $n=3$, $d=1.5$, $\underline{\tau}_\sigma=0$, $\gamma=0.5$, and $N=100$.}
\end{figure}	

\begin{figure}[!tb]
	\centering
    \includegraphics[width=0.7\linewidth]{NV_oos_obj_3_1_0.5_50.pdf}
	\caption{\label{fig: NV_oos_obj_0.7_100} 95\% UCB on the mean postdecision disappointment difference $(\texttt{DI} - \texttt{DD})$ for the newsvendor problem with $n=3$, $d=1.5$, $\underline{\tau}_\sigma=0$, $\gamma=0.7$, and $N=100$.}
\end{figure}

\section{Conclusion}
\label{sec: discussion}

Despite theoretical and computational advances in decision-dependent DRO, there is a limited understanding of the importance of incorporating decision-dependent/adaptive distributional ambiguity.
To reveal the intrinsic balance between modeling fidelity, solution quality, and computational
efficiency that characterizes the decision-dependent DRO framework, we considered a general two-stage stochastic mixed-integer program with (non)convex continuous recourse. We assumed that the probability distribution of random parameters is unknown and depends on decisions. We thus investigated a distributionally robust approach to this problem, where the distributional ambiguity is modeled with a polyhedral decision-dependent ambiguity set. Motivated by representative applications in joint pricing-stocking problems with price-dependent demand and facility location problems with location-dependent demand, we considered cases where the objective function of the recourse is bilinear or linear in the first- and second-stage decision variables, leading to a nonconvex or convex recourse, respectively. Leveraging Lagrangian/linear programming duality, we reformulated the resulting problem as a two-stage stochastic mixed-integer nonlinear program. Building upon this reformulation, we proposed decomposition-based cutting-plane algorithms to obtain an $\epsilon$-optimal solution to the resulting problems and established their finite convergence. The proposed algorithm for the nonconvex recourse may be of independent interest in solving two-stage stochastic programs  when the cost vector---or equivalently, the recourse matrix---in the second-stage problem is simultaneously (linearly) bi-parameterized by the first-stage decision variables and uncertain parameters.
Our experiments on benchmark instances indicate that a decision-dependent ambiguity set substantially reduces the out-of-sample cost (up to 11\% and 7\% on average, respectively for the newsvendor and facility location problems), 
thereby mitigating the optimizer's curse relative to a decision-independent DRO, up to 65\%. However, this improvement is achieved at the expense of an increased computational effort, with the absolute runtime ratio falling between 2 and 25 for the medium- to large-sized newsvendor instances and between 2 and 12 for the facility location instances. These results indicate that decision-dependent DRO problems are inherently more challenging to solve than their decision-independent counterparts. Nevertheless, our proposed solution algorithms outperform the direct solution of the extensive formulation using a commercial nonconvex solver.

Future research will examine the theoretical properties of the gap between postdecision disappointment in a decision-dependent DRO and that of a decision-independent DRO, both evaluated under full knowledge of the true decision-dependent distribution.
From a computational perspective, we investigate the case where the sample space is infinite. As in the DRO literature with a decision-independent ambiguity set, more generalized forms of duality, e.g., conic duality, are expected to be needed for reformulating the problem as a semi-infinite program, where a delayed cut generation scheme may be used to find a worst-case realization. Especially for the nonconvex recourse, it would be interesting to explore how the proposed disjunctive cutting-plane algorithm may be extended. Another direction for future research is to investigate decision-dependent stochastic programs with probabilistic constraints.


\ECSwitch 


\ECHead{Electronic Companion}
\section{Proofs for Problem Reformulation}

\proof{Proof of Theorem \ref{thm: simple-moment-inner-lag-dual}}
    Let us define $\Cs{M}:=\Set*{\bs{p} \ge \bs{0}}{ \ul{p}_{\omega} \le  p_{\omega} \le \ol{p}_{\omega}, \; \omega \in [N]}$.
    For a fixed $\bs{x} \in \Cs{X}$, by dualizing the second set of constraints in \eqref{eq: simple-moment}, a Lagrangian function of problem $\max_{\bs{p} \in \Cs{P}(\bs{x})} \ \ep{\bs{p}}{h(\bs{x},\bs{\xi})}$  can be written as: 
        $\Fs{L}(\bs{x}, \bs{p}, \bs{\lambda})= \bs{\lambda}^{\top} \bs{\vartheta}(\bs{x})+ \sum_{\omega \in [N]} p_{\omega} \Big(h(\bs{x},\bs{\xi}_{\omega}) - \bs{\lambda}^{\top}\bs{g}(\bs{\xi}_{\omega})\Big)$.
    Hence, the Lagrangian dual of $\max\limits_{\bs{p} \in \Cs{P}(\bs{x}) }
    \ \ep{\bs{p}}{h(\bs{x},\bs{\xi})}$ is 
     $   \min\limits_{ \bs{\lambda} \ge \bs{0}}   \max\limits_{ \bs{p} \in \Cs{M}}  \ \Fs{L}(\bs{x}, \bs{p}, \bs{\lambda})$.
We have    \begin{align*}
         - & \bs{\lambda}^{\top} \bs{\vartheta}(\bs{x}) +  \max_{ \bs{p} \in \Cs{M}}  \Fs{L}(\bs{x}, \bs{p}, \bs{\lambda}) \\
        =& \sum_{\omega \in [N]} \ol{p}_{\omega} \Big(h(\bs{x},\bs{\xi}_{\omega}) - \bs{\lambda}^{\top} \bs{g}(\bs{\xi}_{\omega})\Big)_{+}   -   \sum_{\omega \in [N]} \ul{p}_{\omega} \Big(-h(\bs{x},\bs{\xi}_{\omega}) + \bs{\lambda}^{\top} \bs{g}(\bs{\xi}_{\omega}) \Big)_{+} \\
        = & \sum_{\omega \in [N]} \ol{p}_{\omega} \Big(h(\bs{x},\bs{\xi}_{\omega}) - \bs{\lambda}^{\top} \bs{g}(\bs{\xi}_{\omega})\Big)_{+}  - \ul{p}_{\omega} \Big(-h(\bs{x},\bs{\xi}_{\omega}) + \bs{\lambda}^{\top} \bs{g}(\bs{\xi}_{\omega}) \Big)_{+} \\
        = &  \sum_{\omega \in [N]} \varphi_{\omega} \Big[ h(\bs{x},\bs{\xi}_{\omega}) - \bs{\lambda}^{\top} \bs{g}(\bs{\xi}_{\omega}) \Big] = \sum_{\omega \in [N]} G_{\omega}(\bs{x}, \bs{\lambda}).  
    \end{align*}
    The second equality above is due to the fact that either $\Big(h(\bs{x},\bs{\xi}_{\omega}) - \bs{\lambda}^{\top} \bs{g}(\bs{\xi}_{\omega})\Big)_{+}$ or $\Big(-h(\bs{x},\bs{\xi}_{\omega}) + \bs{\lambda}^{\top} \bs{g}(\bs{\xi}_{\omega})\Big)_{+}$ is positive. Now, because $\max_{\bs{p} \in \Cs{P}(\bs{x}) }
    \ \ep{\bs{p}}{h(\bs{x},\bs{\xi})}$ is an LP and by Assumption {\bf (A2)}, there is no duality gap. Combining the resulting dual problem with the outer minimization problem yields the reformulation stated in the theorem. \hfill \Halmos
\endproof

We state a lemma, which will be useful to prove Corollary \ref{cor: reform-recourse}.  
\begin{lemma}
    \label{lem: convex_function}
    Consider function $\varphi_{\omega}[z]=    \ol{p}_{\omega} (z)_{+} -   \ul{p}_{\omega} (-z)_{+}$ for $\omega \in [N]$. Then, $\varphi_{\omega}[z]$ is convex and monotonically nondecreasing in $z$. 
\end{lemma}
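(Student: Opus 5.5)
We need $0 \le \ul{p}_{\omega} \le \ol{p}_{\omega}$, which holds since these are bounds on a probability in \eqref{eq: simple-moment}; if $\ul{p}_{\omega} > \ol{p}_{\omega}$, the set $\Cs{P}(\bs{x})$ would be empty, contradicting Assumption \textbf{(A2)}. We may then write $\varphi_{\omega}$ piecewise:
\[
\varphi_{\omega}[z]=\begin{cases} \ol{p}_{\omega}\, z, & z \ge 0,\\ \ul{p}_{\omega}\, z, & z < 0,\end{cases}
\]
because $(z)_+ = z$ and $(-z)_+ = 0$ for $z\ge 0$, and $(z)_+=0$ and $-\ul{p}_{\omega}(-z)_+ = \ul{p}_{\omega} z$ for $z<0$. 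From this form, $\varphi_{\omega}$ is a continuous piecewise-linear function with slopes $\ul{p}_{\omega}$ on $(-\infty,0]$ and $\ol{p}_{\omega}$ on $[0,\infty)$.

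\textbf{Convexity.} I would show $\varphi_{\omega}[z]=\max\{\ul{p}_{\omega} z,\ \ol{p}_{\omega} z\}$. For $z\ge 0$ the maximum is $\ol{p}_{\omega} z$ since $\ol{p}_{\omega}\ge \ul{p}_{\omega}$. For $z<0$ the inequality reverses and the maximum is $\ul{p}_{\omega} z$. A pointwise maximum of two linear functions is convex. Equivalently, this is the statement that the slopes are nondecreasing, $\ul{p}_{\omega}\le \ol{p}_{\omega}$, across the single breakpoint.

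\textbf{Monotonicity.} Both slopes are nonnegative, since $\ul{p}_{\omega}\ge 0$ (from $\bs{p}\ge \bs{0}$, or by taking the lower bound nonnegative without loss of generality). A continuous piecewise-linear function with nonnegative slopes on each piece is nondecreasing. Alternatively, $z\mapsto \ul{p}_{\omega} z$ and $z\mapsto \ol{p}_{\omega} z$ are each nondecreasing, and the maximum of nondecreasing functions is nondecreasing.

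The only delicate step is justifying $0\le \ul{p}_{\omega}\le \ol{p}_{\omega}$ from the model's assumptions; everything else is routine. If $\ul{p}_{\omega}<0$ were permitted, I would note that the constraint $\bs{p}\ge\bs{0}$ makes the effective lower bound $\max\{\ul{p}_{\omega},0\}$, so we may replace $\ul{p}_{\omega}$ by this value without changing $\Cs{P}(\bs{x})$.
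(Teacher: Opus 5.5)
Your proof is correct and is essentially the paper's argument. The paper writes $\varphi_{\omega}[z]=\ul{p}_{\omega} z+\varepsilon (z)_{+}$ with $\varepsilon=\ol{p}_{\omega}-\ul{p}_{\omega}\ge 0$ instead of your $\max\{\ul{p}_{\omega} z,\ \ol{p}_{\omega} z\}$, and both rewrites rest on $0\le \ul{p}_{\omega}\le \ol{p}_{\omega}$, which you state explicitly and the paper leaves implicit. One caution: $\bs{p}\ge\bs{0}$ alone does not force $\ul{p}_{\omega}\ge 0$. Your replacement of $\ul{p}_{\omega}$ by $\max\{\ul{p}_{\omega},0\}$ is the correct justification, and it is also what the formula for $\max_{\bs{p}\in\Cs{M}}$ in the proof of Theorem~\ref{thm: simple-moment-inner-lag-dual} implicitly requires.
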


\proof{Proof of Lemma \ref{lem: convex_function}}
    Observe that $\varphi_{\omega}[z]$ can be rewritten as $\varphi_{\omega}[z]=  \ul{p}_{\omega} z+ \varepsilon (z)_{+}$,
    where $\varepsilon \ge 0$ is such that $\ol{p}_{\omega}=\ul{p}_{\omega}+\varepsilon$. Hence, $\varphi_{\omega}[z]$ is monotonically nondecreasing in $z$. Because $(z)_{+}$ is convex in $z$, then, it follows that $\varphi_{\omega}[z]$ is convex in $z$. \hfill \Halmos
\endproof

\proof{Proof of Corollary \ref{cor: reform-recourse}}
By Lemma \ref{lem: convex_function}, $\varphi_{\omega}[z]$ is monotonically nondecreasing in $z$. Hence, 
\begin{align*}
    G_{\omega}(\bs{x},\bs{\lambda})= 
    \varphi_{\omega} \Big [h(\bs{x},\bs{\xi}_{\omega}) - \bs{\lambda}^{\top} \bs{g}(\bs{\xi}_{\omega})\Big] 
    = \min_{\bs{y} \in \Cs{Y}(\bs{x}, \bs{\xi}_{\omega}) } \varphi_{\omega} \Big [\psi_{0}(\bs{x},\bs{y}, \bs{\xi}_{\omega}) - \bs{\lambda}^{\top} \bs{g}(\bs{\xi}_{\omega})\Big]. 
\end{align*}
Now, linearization of $\varphi_{\omega}[\cdot]$ by introducing additional variables $\gamma$ and $\mu$, yields \eqref{eq: Q_k_recourse}. Theorem \ref{thm: simple-moment-inner-lag-dual} completes the proof. \hfill \Halmos 
\endproof

We end this section by establishing that problem \eqref{eq: reform-simple-moment} admits bounded optimal multipliers $\bs{\lambda}$. These lemmas will be helpful in establishing the finite convergence of Algorithm \ref{alg: DD-nonconvex-recourse}. 

\begin{lemma}
    \label{lem: convex_lambda_function}
    For a fixed $\bs{x} \in \Cs{X}$ and $\omega \in [N]$, 
    $G_{\omega}(\bs{x},\bs{\lambda})$,  defined in \eqref{eq: Q_k}, is a proper, convex, continuous function in $\bs{\lambda}$ on $\Bs{R}^{s}$. 
\end{lemma}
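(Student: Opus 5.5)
The proof is a composition argument. Fix $\bs{x} \in \Cs{X}$ and $\omega \in [N]$, and write
\[
G_{\omega}(\bs{x},\bs{\lambda}) = \varphi_{\omega}\big[a_{\omega} - \bs{\lambda}^{\top}\bs{b}_{\omega}\big],
\qquad a_{\omega} := h(\bs{x},\bs{\xi}_{\omega}),\quad \bs{b}_{\omega} := \bs{g}(\bs{\xi}_{\omega}).
\]
Here $a_{\omega}$ is a fixed real number by Remark \ref{rem: bounded_dual}, and $\bs{b}_{\omega} \in \Bs{R}^{s}$ is a fixed vector. The map $\bs{\lambda} \mapsto a_{\omega} - \bs{\lambda}^{\top}\bs{b}_{\omega}$ is therefore affine and finite on all of $\Bs{R}^{s}$.

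\textbf{Convexity.} By Lemma \ref{lem: convex_function}, $\varphi_{\omega}$ is convex on $\Bs{R}$. A convex function composed with an affine map is convex, since convex combinations pass through affine maps. Hence $G_{\omega}(\bs{x},\cdot)$ is convex. Monotonicity of $\varphi_{\omega}$ is not needed here because the inner map is affine.

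\textbf{Properness and continuity.} From the proof of Lemma \ref{lem: convex_function}, $\varphi_{\omega}[z] = \ul{p}_{\omega} z + \varepsilon (z)_{+}$, which is real-valued everywhere. So $G_{\omega}(\bs{x},\cdot)$ is finite on all of $\Bs{R}^{s}$: it never takes the value $-\infty$ and its domain is nonempty, which gives properness. For continuity, $(z)_{+}$ is continuous, so $\varphi_{\omega}$ is continuous (indeed Lipschitz with constant $\ol{p}_{\omega}$, since $0 \le \ul{p}_{\omega} \le \ol{p}_{\omega}$). Composing with a continuous affine map gives continuity of $G_{\omega}(\bs{x},\cdot)$. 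Alternatively, one can cite the standard fact that a finite convex function on $\Bs{R}^{s}$ is continuous.

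\textbf{Main point to check.} The only real prerequisite is that $h(\bs{x},\bs{\xi}_{\omega})$ is finite, so that the affine map is real-valued; this is exactly Remark \ref{rem: bounded_dual} under Assumption \textbf{(A3)}. Everything else is routine composition.
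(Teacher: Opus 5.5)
Your proof is correct and follows essentially the same route as the paper. Properness comes from the finiteness of $h(\bs{x},\bs{\xi}_{\omega})$ under \textbf{(A3)}, and convexity from composing the convex $\varphi_{\omega}$ with a map that is affine in $\bs{\lambda}$. The paper obtains continuity as the automatic continuity of a finite convex function on $\Bs{R}^{s}$; your direct argument via the continuity (indeed Lipschitz continuity) of $\varphi_{\omega}$ is an equally valid alternative, and you also note the paper's route.
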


\proof{Proof of Lemma \ref{lem: convex_lambda_function}}
    By the boundedness of $h(\bs{x},\bs{\xi}_{\omega})$ (implied by Assumption {\bf A3}), we have $G_{\omega}(\bs{x},\bs{\lambda}) > -\infty$ for all $\bs{\lambda} \in \Bs{R}^{s}$ and there exists $\bs{\lambda} \in \Bs{R}^{s}$ with $G_{\omega}(\bs{x},\bs{\lambda}) < \infty$, e.g., $\bs{\lambda}=\bs{0}$; proving $G_{\omega}(\bs{x},\cdot)$ is a proper function. 
    In addition, by the convexity of $\varphi_{\omega}[\cdot]$ from Lemma \ref{lem: convex_function} and linearity of $h(\bs{x},\bs{\xi}_{\omega}) - \bs{\lambda}^{\top} \bs{g}(\bs{\xi}_{\omega}) $ in $\bs{\lambda}$, $G_{\omega}(\bs{x},\cdot)$  is a convex function, and hence, continuous on $\Bs{R}^{s}$. \hfill \Halmos
\endproof

\begin{lemma}
    \label{lem: boundedness_dual}
    There exists $\overline{\lambda}<\infty$ such that optimizing over the compact set $\Cs{L}:=\Set*{\bs{\lambda} \in \Bs{R}_{+}^{s}}{0 \le \lambda_i \le \overline{\lambda}, \; i \in [s]}$ does not change the optimal value to \eqref{eq: reform-simple-moment}. 
\end{lemma}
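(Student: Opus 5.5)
We will bound the optimal multipliers by looking, for each fixed $\bs{x}\in\Cs{X}$, at the inner dual problem $\min_{\bs{\lambda}\ge \bs{0}} \bs{\lambda}^{\top}\bs{\vartheta}(\bs{x})+\sum_{\omega\in[N]}G_{\omega}(\bs{x},\bs{\lambda})$. This is the LP dual of $\max_{\bs{p}\in\Cs{P}(\bs{x})}\sum_{\omega}p_{\omega}h(\bs{x},\bs{\xi}_{\omega})$ (proof of Theorem~\ref{thm: simple-moment-inner-lag-dual}). By Lemma~\ref{lem: convex_lambda_function} the dual objective is convex and piecewise linear in $\bs{\lambda}$. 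By Assumption~(A2) and LP duality its minimum is attained, and it is attained at a vertex of the dual polyhedron.

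\textbf{Step 1: write the dual as an LP.} Linearize $\varphi_{\omega}$ with $\gamma_{\omega},\mu_{\omega}\ge 0$. The resulting LP in $(\bs{\lambda},\bs{\gamma},\bs{\mu})$ has constraints
\[
\gamma_{\omega}-\mu_{\omega}+\bs{\lambda}^{\top}\bs{g}(\bs{\xi}_{\omega})\ge h(\bs{x},\bs{\xi}_{\omega}),\qquad \bs{\lambda},\bs{\gamma},\bs{\mu}\ge\bs{0},
\]
and objective $\bs{\lambda}^{\top}\bs{\vartheta}(\bs{x})+\sum_{\omega}(\ol{p}_{\omega}\gamma_{\omega}-\ul{p}_{\omega}\mu_{\omega})$. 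The constraint matrix depends only on $\bs{g}(\bs{\xi}_{\omega})$ and identity blocks, so it is independent of $\bs{x}$. Only the right-hand side $h(\bs{x},\bs{\xi}_{\omega})$ changes with $\bs{x}$.

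\textbf{Step 2: bound the right-hand side uniformly.} We need $|h(\bs{x},\bs{\xi}_{\omega})|\le H<\infty$ for all $\bs{x}\in\Cs{X}$ and $\omega\in[N]$. Assumption~(A3) and Remark~\ref{rem: bounded_dual} give boundedness of $\Cs{Y}(\bs{x},\bs{\xi}_{\omega})$ and a bounded dual optimal set. The dual feasible region $\{\bs{\pi}\ge\bs{0}:\bs{\pi}^{\top}\bs{D}_{\omega}\le \bs{q}_{\omega}^{\top}+\bs{x}^{\top}\bs{L}_{\omega}\}$ varies continuously in $\bs{x}$ and has only trivial recession directions. Its vertices are therefore bounded on the compact set $\Cs{X}$, so $h(\cdot,\bs{\xi}_{\omega})$ is bounded on $\Cs{X}$. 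Alternatively one can bound the primal directly: $\Cs{Y}$ is bounded and $\Cs{X}$ is compact, so the objective $\bs{q}^{\top}\bs{y}+\bs{x}^{\top}\bs{L}\bs{y}$ is bounded on the graph. This works whenever the feasible sets are uniformly bounded, which holds because their recession cone $\{\bs{y}\ge\bs{0}:\bs{D}_{\omega}\bs{y}\ge\bs{0}\}$ is $\{\bs{0}\}$ independently of $\bs{x}$, and they are upper semicontinuous in $\bs{x}$.

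\textbf{Step 3: bound the vertices.} Consider a basic optimal solution of the LP in Step~1. It solves $B\bs{z}=\bs{r}(\bs{x})$ for some basis $B$ drawn from a finite family independent of $\bs{x}$, where $\bs{r}(\bs{x})$ has entries $h(\bs{x},\bs{\xi}_{\omega})$ or $0$. Hence
\[
\|\bs{\lambda}\|_{\infty}\le \max_{B}\|B^{-1}\|_{\infty}\,H=:\overline{\lambda}.
\]
The remaining issue is attainment at a vertex. A vertex exists because the feasible region lies in the nonnegative orthant and so contains no lines. The LP is bounded below by weak duality, since $\Cs{P}(\bs{x})\neq\emptyset$ by (A2). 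Therefore the minimum is attained at a vertex.

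\textbf{Step 4: conclude.} For every $\bs{x}\in\Cs{X}$ there is an inner-optimal $\bs{\lambda}\in\Cs{L}$. Restricting $\bs{\lambda}$ to $\Cs{L}$ therefore leaves $\min_{\bs{\lambda}}$ unchanged pointwise in $\bs{x}$, and hence leaves the optimal value of \eqref{eq: reform-simple-moment} unchanged.

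The main obstacle is Step~2, the uniform bound on $h$ over $\Cs{X}$ when $\bs{L}\neq\bs{0}$. I would handle it with the primal argument, using the compactness of $\Cs{X}$ and the $\bs{x}$-independent trivial recession cone of $\Cs{Y}$.
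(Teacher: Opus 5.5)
Your proposal is correct and follows essentially the same route as the paper. The paper also passes to the LP dual in $(\bs{\lambda},\bs{\gamma},\bs{\mu})$, written in equality form with matrix $[\bs{A}^{\top}\ I\ -I]$ and right-hand side $\bs{h}_{\bs{x}}$, then bounds the $\bs{\lambda}$-components of optimal basic solutions over finitely many $\bs{x}$-independent bases using a uniform bound $\overline{h}$ on $|h(\bs{x},\bs{\xi}_{\omega})|$ over $\Cs{X}$. The paper only asserts that uniform bound from pointwise boundedness and compactness, whereas your Step~2 actually justifies it via the $\bs{x}$-independent trivial recession cone of $\Cs{Y}$ and the compactness of $\Cs{X}$.
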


\proof{Proof of Lemma \ref{lem: boundedness_dual}}
    For a fixed $\bs{x} \in \Cs{X}$, following the proof of Theorem \ref{thm: simple-moment-inner-lag-dual}, $\max_{\bs{p} \in \Cs{P}(\bs{x})}  \linebreak  \ep{\bs{p}}{h(\bs{x},\bs{\xi})}$ can be reformulated as the minimization problem $\min_{\bs{\lambda} \ge \bs{0}} F_{\bs{x}}(\bs{\lambda})$, where
        $F_{\bs{x}}(\bs{\lambda}):= \bs{\lambda}^{\top}\bs{\vartheta}(\bs{x})  + \sum_{\omega \in [N]} G_{\omega}(\bs{x}, \bs{\lambda})$.
    Let $\Cs{M}:=\Set*{\bs{p} \in \Bs{R}^{N}}{\ul{p}_{\omega} \le p_{\omega} \le \ol{p}_{\omega}, \; \omega \in [N]}$, $\bs{h}_{\bs{x}}:=(h(\bs{x},\bs{\xi}_{\omega}))_{\omega \in [N]}$, and $\bs{A}:=[\bs{g}(\bs{\xi}_{1}) \ \cdots \ \bs{g}(\bs{\xi}_{N})]$. Then
        $F_{\bs{x}}(\bs{\lambda})=\max_{\bs{p} \in \Cs{M}} \left\{\bs{h}_{\bs{x}}^{\top}\bs{p}+\bs{\lambda}^{\top}\left(\bs{\vartheta}(\bs{x})-\bs{A}\bs{p}\right)\right\}$.
    Suppose $\bs{d}\in\Bs{R}_{+}^{s}$ satisfies
        $\max_{\bs{p}\in\Cs{M}} \ \bs{d}^{\top}\left(\bs{\vartheta}(\bs{x})-\bs{A}\bs{p}\right)=0$.
    Then, for every $t\ge 0$ and every optimal multiplier $\bs{\lambda}$,
    \begin{align*}
        F_{\bs{x}}(\bs{\lambda}+t\bs{d})
        &=\max_{\bs{p} \in \Cs{M}} \left\{\bs{h}_{\bs{x}}^{\top}\bs{p}+\bs{\lambda}^{\top}\left(\bs{\vartheta}(\bs{x})-\bs{A}\bs{p}\right)+t\bs{d}^{\top}\left(\bs{\vartheta}(\bs{x})-\bs{A}\bs{p}\right)\right\} \\
        & = \max_{\bs{p} \in \Cs{M}} \left\{\bs{h}_{\bs{x}}^{\top}\bs{p}+\bs{\lambda}^{\top}\left(\bs{\vartheta}(\bs{x})-\bs{A}\bs{p}\right) \right\} + t \max_{ \bs{p} \in \Cs{M}}\bs{d}^{\top}\left(\bs{\vartheta}(\bs{x})-\bs{A}\bs{p}\right)\\
        & \le F_{\bs{x}}(\bs{\lambda}).
    \end{align*}
    Hence, by optimality of $\bs{\lambda}$, equality holds. Therefore, unbounded rays of optimal multipliers, if present, do not change the objective value, and it suffices to select a bounded representative optimal multiplier.
    To construct such a representative uniformly over $\bs{x}\in\Cs{X}$, consider the equivalent LP dual
    \begin{equation*}
        \min_{\bs{\lambda},\bs{\gamma},\bs{\mu}}\left\{\bs{\lambda}^{\top}\bs{\vartheta}(\bs{x})+\sum_{\omega\in[N]}\ol{p}_{\omega}\gamma_{\omega}-\sum_{\omega\in[N]}\ul{p}_{\omega}\mu_{\omega}:\;
        \bs{A}^{\top}\bs{\lambda}+\bs{\gamma}-\bs{\mu}=\bs{h}_{\bs{x}},\; \bs{\lambda},\bs{\gamma},\bs{\mu}\ge\bs{0}\right\}.
    \end{equation*}
    Eliminating $\bs{\gamma}$ and $\bs{\mu}$ recovers $\min_{\bs{\lambda}\ge\bs{0}}F_{\bs{x}}(\bs{\lambda})$. Since $\Cs{P}(\bs{x})$ is nonempty by Assumption {\bf (A2)} and $\Cs{M}$ is bounded, the primal LP is feasible and bounded; thus, the above dual has a finite optimal extreme point. Every such extreme point is a basic feasible solution of the system with coefficient matrix $[\bs{A}^{\top} \ I \ -I]$ and right-hand side $\bs{h}_{\bs{x}}$. 
    Let $\overline{h}:=\sup\{|h(\bs{x},\bs{\xi}_{\omega})|: \bs{x}\in\Cs{X},\; \omega\in[N]\} < \infty$, where finiteness follows from the boundedness of $h(\bs{x},\bs{\xi}_{\omega})$ (implied by Assumption {\bf A3}) and the compactness of $\Cs{X}$.  
    Because $|h(\bs{x},\bs{\xi}_{\omega})|\le \overline{h}$ for all $\bs{x}\in\Cs{X}$ and $\omega\in[N]$, and because there are finitely many bases of $[\bs{A}^{\top} \ I \ -I]$, there is a constant $\overline{\lambda}<\infty$ that bounds the $\bs{\lambda}$-components of these optimal basic feasible solutions for all $\bs{x}\in\Cs{X}$. Therefore, for every $\bs{x}\in\Cs{X}$, the minimization of $F_{\bs{x}}$ admits an optimal solution in $\Cs{L}$, and restricting $\bs{\lambda}$ to $\Cs{L}$ does not change the optimal value of \eqref{eq: reform-simple-moment}.  \hfill \Halmos
\endproof

\section{Details of Disjunctive Cut and its Separation}
\label{sec: EC_disjunctive}
We have 
\begin{equation}
    \label{eq: projectioncone}
    \Cs{W}_{\omega}:= \left\lbrace  (\bs{\alpha}_{\bs{x},\omega}, \bs{\alpha}_{\bs{\lambda},\omega}, \alpha_{\theta,\omega}, \rho_{\omega}) \middle\vert
    \begin{array}{l}
        \exists \; \{\delta_{\omega, j, q}^{+} \ge 0: j \in \Cs{J}^{+}_{\omega,q} \},
         \{\delta_{\omega,j,q}^{-} \ge 0: j \in \Cs{J}^{-}_{\omega,q}\}, \\
        \quad \delta_{\theta,q} \ge 0, \; \bs{\delta}_{\bs{x},q} \ge \bs{0}, \; q \in \Cs{Q}, \; \st \\ 
        \bs{A}^{\top} \delta_{\bs{x},q}  - \delta_{\theta,q} \hat{\sigma}  \bs{L}_{\omega} \hat{\bs{y}}_{\omega,q} -  \delta_{\theta,q} \bs{B}_{\omega}^{\top} \hat{\bs{\pi}} + \\ 
        \quad \sum\limits_{ j \in \Cs{J}^{+}_{\omega,q} } \delta_{\omega, j, q}^{+} \bs{l}_{\omega,j} - \sum\limits_{ j \in \Cs{J}^{-}_{\omega,q} } \delta_{\omega, j, q}^{-} \bs{l}_{\omega,j}  \le \bs{\alpha}_{\bs{x},\omega}, \; q \in \Cs{Q}, \\
        \delta_{\theta,q} \hat{\sigma}  \bs{g}(\bs{\xi}_{\omega}) \le \bs{\alpha}_{\bs{\lambda},\omega}, \; q \in \Cs{Q}, \\ 
        \delta_{\theta,q} \le \alpha_{\theta,\omega}, \; q \in \Cs{Q}, \\ \bs{\delta}_{\bs{x},q}^{\top} \bs{d} + \delta_{\theta,q} \hat{\bs{\pi}}^{\top} \bs{b}_{\omega} - \delta_{\theta,q} \hat{\sigma} \hat{\bs{x}}^{\top} \bs{L}_{\omega} \hat{\bs{y}}_{\omega,q} + \\
        \quad \sum\limits_{ j \in \Cs{J}^{+}_{\omega,q} } \delta_{\omega, j, q}^{+} \hat{\bs{x}}^{\top} \bs{l}_{\omega,j} - 
        \sum\limits_{ j \in \Cs{J}^{-}_{\omega,q} } \delta_{\omega, j, q}^{-} \hat{\bs{x}}^{\top} 
 \bs{l}_{\omega,j} \ge \rho_{\omega}, \; q \in \Cs{Q} \\ 
    \end{array}
    \right\rbrace.
\end{equation}

As $\Cs{W}_{\omega}$ is a cone, a normalizing constraint like $\sum_{q \in \Cs{Q}} \delta_{\theta, q}=1$, or $\sum_{q \in \Cs{Q}} \Big(\sum_{j \in \Cs{J}^{+}_{\omega,q} }  \linebreak \delta^{+}_{\omega,j, q} + \sum_{j \in \Cs{J}^{-}_{\omega,q} } \delta^{-}_{\omega,j, q} + \delta_{\theta, q} + \bs{\delta}_{\bs{x},q}^{\top} \bs{e} \Big)=1$ can be added to $\Cs{W}_{\omega}$ once solving the CGLP \eqref{eq: CGLP}. 
Note that if the normalizing constraint $\sum_{q \in \Cs{Q}} \delta_{\theta, q}=1$ is used, then the CGLP allows for those facets of $\conv{\Cs{T}_{\omega}}$ that have a positive coefficient $\alpha_{\theta,\omega}$ for $\theta_{\omega}$, as $\alpha_{\theta,\omega} \ge \max_{q \in \Cs{Q}} \delta_{\theta,q} >0$ by the constraints in $\Cs{W}_{\omega}$. A valid inequality is given by 
    $\theta_{\omega} + \Big[\frac{\bs{\alpha}_{\bs{x},\omega}}{\alpha_{\theta,\omega} } \Big]^{\top} \bs{x} +\Big[\frac{\bs{\alpha}_{\bs{\lambda},\omega}}{\alpha_{\theta,\omega} } \Big]^{\top} \bs{\lambda}   \ge \frac{\rho_{\omega}}{\alpha_{\theta,\omega}}$. 

\section{Proofs of Section \ref{sec: solution}}

\proof{Proof of Proposition \ref{prop: Lagrangian}}
Let $(\hat{\bs{\pi}}_{\bs{y}}, \hat{\pi}_{\gamma}, \hat{\pi}_{\mu})$ be optimal slack variables
to \eqref{eq: Lagrangian}. 
Hence, for any $(\bs{x},\bs{\lambda},\bs{y}, \mu, \gamma)$, we have 
\begin{align*}
    & \Fs{L}_{\omega}(\bs{x},\bs{\lambda},\bs{y}, \mu, \gamma, \hat{\sigma}, \hat{\bs{\pi}}, \hat{\bs{\pi}}_{\bs{y}}, \hat{\pi}_{\gamma}, \hat{\pi}_{\mu})  = - \hat{\sigma}  \bs{\lambda}^{\top} \bs{g}(\bs{\xi}_{\omega})+ \hat{\bs{\pi}}^{\top} (\bs{B}_{\omega}\bs{x}+\bs{b}_{\omega}) + \gamma ( \ol{p}_{\omega} -  \hat{\pi}_{\gamma} - \hat{\sigma} ) - \mu (\ul{p}_{\omega} +\hat{\pi}_{\mu} - \hat{\sigma})   \\
    & \qquad + (\hat{\sigma} \bs{q}_{\omega}^{\top} - \hat{\bs{\pi}}_{\bs{y}}^{\top} - \hat{\bs{\pi}}^{\top} \bs{D}_{\omega} + \hat{\sigma}  \bs{x}^{\top} \bs{L}_{\omega})\bs{y} =  - \hat{\sigma}  \bs{\lambda}^{\top} \bs{g}(\bs{\xi}_{\omega})  + \hat{\bs{\pi}}^{\top} (\bs{B}_{\omega}\bs{x}+\bs{b}_{\omega}) + \hat{\sigma}  (\bs{x} - \hat{\bs{x}})^{\top} \bs{L}_{\omega} \bs{y}, 
\end{align*}
where the second equality follows from the facts that $\ol{p}_{\omega} -  \hat{\pi}_{\gamma} - \hat{\sigma}=0$, $\ul{p}_{\omega} +\hat{\pi}_{\mu} - \hat{\sigma}=0$, and  $\hat{\sigma} \bs{q}_{\omega}^{\top} - \hat{\bs{\pi}}_{\bs{y}}^{\top} - \hat{\bs{\pi}}^{\top} \bs{D}_{\omega} + \hat{\sigma} \hat{\bs{x}}^{\top} \bs{L}_{\omega}=\bs{0}^{\top}$. 
Now, given that $Q_{\omega}(\bs{x},\bs{\lambda}, \sigma, \bs{\pi}, \bs{\pi}_{\bs{y}}, \pi_{\gamma}, \pi_{\mu}):=\min_{\bs{y}, \mu, \gamma} \ \Fs{L}_{\omega}(\bs{x},\bs{\lambda},\bs{y}, \mu, \gamma, \sigma, \bs{\pi}, \bs{\pi}_{\bs{y}}, \pi_{\gamma}, \pi_{\mu})$,  \eqref{eq: Lagrangian-nonconvex} follows, which also proves the last statement. 
\hfill \Halmos
\endproof

\proof{Proof of Proposition \ref{lem: optimality_conditions}}
Given that $\hat{\sigma} \ge 0$, Proposition \ref{prop: Lagrangian} implies that the condition 
    $(\bs{x} - \hat{\bs{x}})^{\top} \bs{L}_{\omega} (\bs{y}_{\omega} - \bs{y}^*_{\omega}) \ge 0$ for all $\bs{y}_{\omega}$ 
is sufficient and necessary for the optimality of $\bs{y}^*_{\omega}$. 
Because $\hat{\sigma}  (\bs{x} - \hat{\bs{x}})^{\top} \bs{L}_{\omega} \bs{y}_{\omega}$ is linear in $\bs{y}$, an optimal $\bs{y}^*_{\omega}$ is a boundary point of the space of $\bs{y}_{\omega}$, as stated in the lemma.  
\hfill \Halmos
\endproof

Given that $\Fs{L}_{\omega}(\bs{x},\bs{\lambda}, \bs{y}, \mu, \gamma, \hat{\sigma}, \hat{\bs{\pi}}, \hat{\bs{\pi}}_{\bs{y}}, \hat{\pi}_{\gamma}, \hat{\pi}_{\mu})$ only depends on primal variables $\bs{y}$ and dual multipliers  $(\hat{\sigma},\hat{\bs{\pi}})$, we suppress $(\mu, \gamma)$ and $(\hat{\bs{\pi}}_{\bs{y}}, \hat{\pi}_{\gamma}, \hat{\pi}_{\mu})$ from its  arguments. Moreover, $\Fs{L}_{\omega}(\cdot)$ depends on $\hat{\bs{x}}$ only through $(\hat{\sigma},\hat{\bs{\pi}})$; hence, we add $\hat{\bs{x}}$ to its arguments, i.e.,  $\Fs{L}_{\omega}(\bs{x},\bs{\lambda}, \bs{y}, \hat{\sigma}, \hat{\bs{\pi}}; \hat{\bs{x}})$, when needed for clarity. Similarly, we write $Q_{\omega}(\bs{x},\bs{\lambda}, \hat{\sigma}, \hat{\bs{\pi}}; \hat{\bs{x}})$.

The proof of Theorem~\ref{thm: DD-finite-convergence-nonconvex} relies on the following technical lemma. 
Consider $\Cs{L}$, as defined in Lemma~\ref{lem: boundedness_dual}. 
For $\omega \in [N]$ and a fixed $(\bs{x},\bs{\lambda}) \in \Cs{X} \times \Cs{L}$, let $\Cs{F}_{\omega}(\bs{x},\bs{\lambda})$ and $\Cs{E}_{\omega}(\bs{x},\bs{\lambda})$ denote the (primal) feasible region of problem \eqref{eq: Q_k_nonconvex-recourse} and  
the (dual) feasible region of problem \eqref{eq: Lagrangian}, respectively. Moreover, let $\Cs{D}_{\omega}(\bs{x},\bs{\lambda})$  denote the 
set of optimal solutions for problem \eqref{eq: Lagrangian}. 
Recall that a set-valued function $t \mapsto F(t): \Cs{D} \rightrightarrows \Bs{R}$ is upper semicontinuous (u.s.c.) at $t \in \Cs{D}$ if $\lim_{v \rightarrow \infty} t_{v} = t$, $y_{v} \in F(t_{v})$, and $\lim_{v \rightarrow \infty} y_{v} = y$ imply that $y \in F(t)$. 
A real-valued function $t \mapsto F(t): \Cs{D} \rightarrow \Bs{R}$ is lower semicontinuous (l.s.c.) at $t \in \Cs{D}$ if $\lim_{v \rightarrow \infty} t_{v} = t$ implies that $\liminf_{v \rightarrow \infty} F(t_{v}) = F(t)$. 

\begin{lemma}
\label{lem: G_properties}
For $\omega \in [N]$, we have: 
\begin{enumerate}[label=\roman*.]
    \item \label{i} $\Cs{D}_{\omega}(\bs{x},\bs{\lambda})$ is compact for a fixed $(\bs{x},\bs{\lambda}) \in \Cs{X} \times \Cs{L}$, 
    \item \label{F} $\Cs{F}_{\omega}(\bs{x},\bs{\lambda})$ is compact for a fixed $(\bs{x},\bs{\lambda}) \in \Cs{X} \times \Cs{L}$, 
    \item \label{E} The set-valued function $\Cs{E}_{\omega}(\bs{x},\bs{\lambda})$ is continuous in $(\bs{x},\bs{\lambda})$ on $\Cs{X} \times \Cs{L}$, 
    \item \label{G} The real-valued function $G_{\omega}(\bs{x},\bs{\lambda})$, as defined in \eqref{eq: Q_k_nonconvex-recourse}, is l.s.c. in $(\bs{x},\bs{\lambda})$ on $\Cs{X} \times \Cs{L}$, and 
    \item \label{D} The set-valued function  $\Cs{D}_{\omega}(\bs{x},\bs{\lambda})$ is u.s.c. in $(\bs{x},\bs{\lambda})$ on $\Cs{X} \times \Cs{L}$. 
\end{enumerate}
\end{lemma}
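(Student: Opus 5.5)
We prove the five items in the order (ii), (iii), (i), (iv), (v), since the later ones use the earlier ones as inputs to standard parametric LP results (Berge's maximum theorem and its semicontinuous variants).

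For item~\ref{F}, the feasible region $\Cs{F}_{\omega}(\bs{x},\bs{\lambda})$ of \eqref{eq: Q_k_nonconvex-recourse} is closed, as the intersection of finitely many linear inequalities in $(\bs{y},\mu,\gamma)$ for fixed $(\bs{x},\bs{\lambda})$. Its $\bs{y}$-component lies in $\Cs{Y}(\bs{x},\bs{\xi}_{\omega})$, which is bounded by Assumption {\bf (A3)}. The components $\gamma,\mu\ge 0$ are not bounded a priori. We handle this in one of two ways:
\begin{itemize}
\item restrict attention to the relevant bounded slice $\{\gamma-\mu \le \bar h + \|\bs{\lambda}\|\,\|\bs{g}(\bs{\xi}_{\omega})\|\}$ together with $\gamma\mu=0$ at optimality, which suffices by the structure of $\varphi_{\omega}$; or
\item note that the statement is really about the set of $\bs{y}$ (plus bounded $(\gamma,\mu)$ after fixing a bound derived from $\bar h$ in Lemma~\ref{lem: boundedness_dual} and $\bs{\lambda}\in\Cs{L}$).
\end{itemize}
Either way we obtain a closed and bounded set.

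For item~\ref{E}, the dual region $\Cs{E}_{\omega}(\bs{x},\bs{\lambda})=\{(\sigma,\bs{\pi})\ge 0: \ul{p}_{\omega}\le\sigma\le\ol{p}_{\omega},\ \bs{\pi}^{\top}\bs{D}_{\omega}\le \sigma(\bs{q}_{\omega}^{\top}+\bs{x}^{\top}\bs{L}_{\omega})\}$ does not depend on $\bs{\lambda}$, and its right-hand side depends continuously (linearly) on $\bs{x}$. It is also uniformly bounded: $\sigma$ is boxed, and by Remark~\ref{rem: bounded_dual} the recession cone $\{\bs{\pi}\ge 0:\bs{\pi}^{\top}\bs{D}_{\omega}\le 0\}$ is $\{\bs{0}\}$. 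Closedness of the graph then gives outer (upper) semicontinuity. For inner (lower) semicontinuity, take $\sigma=\ul{p}_{\omega}$ and $\bs{\pi}$ such that the system has a Slater-type point. Complete recourse means $\bs{D}_{\omega}$ has the property that for every $\bs{v}$ some $\bs{y}\ge 0$ satisfies $\bs{D}_{\omega}\bs{y}\ge \bs{v}$; dually, the constraint system is stable under right-hand-side perturbation, so Robinson/Hoffman-type arguments give continuity. \textbf{This is the step I expect to be the main obstacle}, since lower semicontinuity of a polyhedral map with a moving right-hand side needs a constraint qualification. The first thing to try is Hoffman's error bound: for a fixed matrix and a feasible right-hand side $\bs{r}(\bs{x})$, the distance from a point in $\Cs{E}_{\omega}(\bs{x})$ to $\Cs{E}_{\omega}(\bs{x}')$ is at most a constant times $\|\bs{r}(\bs{x})-\bs{r}(\bs{x}')\|$. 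This applies directly, because $\bs{\pi}=\bs{0}$, $\sigma=\ul{p}_{\omega}$ is not necessarily feasible, but feasibility for all $\bs{x}$ is guaranteed by strong duality with the finite primal. It yields Lipschitz continuity of $\Cs{E}_{\omega}$ in the Hausdorff metric on the compact $\Cs{X}$.

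Item~\ref{i} then follows because $\Cs{D}_{\omega}(\bs{x},\bs{\lambda})$ is the optimal face of a linear objective over the compact polytope $\Cs{E}_{\omega}(\bs{x},\bs{\lambda})$, so it is nonempty and compact.

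For items~\ref{G} and~\ref{D}, write $G_{\omega}(\bs{x},\bs{\lambda})=\max_{(\sigma,\bs{\pi})\in\Cs{E}_{\omega}(\bs{x},\bs{\lambda})} f(\bs{x},\bs{\lambda},\sigma,\bs{\pi})$, where $f=-\sigma\bs{\lambda}^{\top}\bs{g}(\bs{\xi}_{\omega})+\bs{\pi}^{\top}(\bs{B}_{\omega}\bs{x}+\bs{b}_{\omega})$ is jointly continuous. Because $\Cs{E}_{\omega}$ is continuous with compact values by item~\ref{E}, Berge's maximum theorem gives that $G_{\omega}$ is continuous, in particular l.s.c., and that the argmax map $\Cs{D}_{\omega}$ is u.s.c. in the closed-graph sense stated in the paper. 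Concretely, for item~\ref{D}, take $t_v\to t$, $d_v\in\Cs{D}_{\omega}(t_v)$ and $d_v\to d$. Then $d\in\Cs{E}_{\omega}(t)$ by closedness of the graph. For any $e\in\Cs{E}_{\omega}(t)$, lower semicontinuity of $\Cs{E}_{\omega}$ supplies $e_v\in\Cs{E}_{\omega}(t_v)$ with $e_v\to e$, and optimality gives $f(t_v,d_v)\ge f(t_v,e_v)$. Passing to the limit yields $f(t,d)\ge f(t,e)$, so $d\in\Cs{D}_{\omega}(t)$.
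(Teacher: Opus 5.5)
The step that fails as written is item (iii), and it takes (iv) and (v) with it, since both rest on lower semicontinuity of $\Cs{E}_{\omega}$ (your argument for (v) makes this explicit). In $\Cs{E}_{\omega}(\bs{x},\bs{\lambda})$ the multiplier $\sigma$ is a variable. So in the constraint $\bs{D}_{\omega}^{\top}\bs{\pi}-\sigma(\bs{q}_{\omega}+\bs{L}_{\omega}^{\top}\bs{x})\le\bs{0}$, the parameter $\bs{x}$ sits in the coefficient column of $\sigma$, and the right-hand side is $\bs{0}$. This is a perturbation of the constraint matrix, not of the right-hand side. Hoffman's bound with a uniform constant is a fixed-matrix statement, and under matrix perturbations lower semicontinuity can genuinely fail (compare $x\mapsto\{z\in[-1,1]: xz\le 0\}$ at $x=0$). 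Your claim that it ``applies directly'' is therefore unjustified. Your preliminary Slater idea also draws the constraint qualification from the wrong half of (A3). Completeness of recourse gives $\{\bs{\pi}\ge\bs{0}:\bs{\pi}^{\top}\bs{D}_{\omega}\le\bs{0}^{\top}\}=\{\bs{0}\}$, i.e.\ boundedness of $\Cs{E}_{\omega}$. A strictly feasible direction for the dual comes instead from boundedness of $\Cs{Y}(\bs{x},\bs{\xi}_{\omega})$, which by Ville's theorem gives $\bs{\pi}_0\ge\bs{0}$ with $\bs{\pi}_0^{\top}\bs{D}_{\omega}<\bs{0}^{\top}$.

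The conclusion of (iii) is still true, and your tool works after one repair: freeze $\sigma$. For fixed $\sigma\in[\ul{p}_{\omega},\ol{p}_{\omega}]$, the slice $\{\bs{\pi}\ge\bs{0}:\bs{D}_{\omega}^{\top}\bs{\pi}\le\sigma(\bs{q}_{\omega}+\bs{L}_{\omega}^{\top}\bs{x})\}$ has the fixed matrix $\bs{D}_{\omega}^{\top}$ and a genuine right-hand side linear in $\bs{x}$. It is nonempty for every $\bs{x}$: for $\sigma>0$ it is $\sigma$ times the dual feasible set of the finite LP defining $h(\bs{x},\bs{\xi}_{\omega})$, and for $\sigma=0$ it contains $\bs{0}$. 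Hoffman then maps each $(\bs{\pi},\sigma)\in\Cs{E}_{\omega}(\bs{x},\bs{\lambda})$ to some $(\bs{\pi}',\sigma)\in\Cs{E}_{\omega}(\bs{x}',\bs{\lambda}')$ with $\|\bs{\pi}'-\bs{\pi}\|\le H\,\ol{p}_{\omega}\|\bs{L}_{\omega}^{\top}(\bs{x}'-\bs{x})\|$, where $H$ depends only on $\bs{D}_{\omega}$.

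With that fix, your architecture matches the paper's (continuity of $\Cs{E}_{\omega}$, then a maximum theorem), but the key ingredient differs. The paper cites Wets (1985, Cor.~11) with boundedness of $\Cs{F}_{\omega}$ as hypothesis, then Wets (Thm.~2) and Meyer (Thm.~1.5). Your Hoffman--Berge argument is self-contained, yields Lipschitz continuity of $\Cs{E}_{\omega}$ and full continuity of $G_{\omega}$, and does not use (ii). That independence matters, because (ii) is false as literally stated: $(\bs{y},\mu,\gamma+t)$ stays feasible for every $t\ge 0$. The paper's bound $\mu,\gamma\le|z|$, with $z:=h(\bs{x},\bs{\xi}_{\omega})-\bs{\lambda}^{\top}\bs{g}(\bs{\xi}_{\omega})$, holds only for the canonical optimal pair $(\gamma,\mu)=((z)_{+},(-z)_{+})$. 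Your explicit truncation is the honest version of what the paper does implicitly. State precisely which set you prove compact and that it contains an optimal solution; that is what the proof of Theorem~\ref{thm: DD-finite-convergence-nonconvex} needs in order to extract a convergent subsequence of optimal primal solutions.
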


\proof{Proof of Lemma \ref{lem: G_properties}}
Consider a fixed $(\bs{x},\bs{\lambda}) \in \Cs{X} \times \Cs{L}$. Recall that $G_{\omega}(\bs{x},\bs{\lambda})$, defined in \eqref{eq: Q_k}, is well defined. Thus, $\Cs{F}_{\omega}(\bs{x},\bs{\lambda})$ is nonempty. 
Consequently, by a similar argument as that in Remark \ref{rem: bounded_dual}, $\Cs{D}_{\omega}(\bs{x},\bs{\lambda})$ is a bounded (and closed) set. 
Now, note that for a fixed $(\bs{x},\bs{\lambda}) \in \Cs{X} \times \Cs{L}$, $\Cs{F}_{\omega}(\bs{x},\bs{\lambda})$ is closed and bounded given that $\Cs{Y}(\bs{x},
\bs{\xi}_{\omega})$ is bounded by Assumption {\bf (A3)}, and the fact that $\mu$ and $\gamma$ are bounded by $|h(\bs{x},\bs{\xi}_{\omega})  - \bs{\lambda}^{\top} \bs{g}(\bs{\xi}_{\omega})|$ from above. By the boundedness of $\Cs{F}_{\omega}(\bs{x},\bs{\lambda})$ and  a direct application of \cite[Corollary~11]{wets1985continuity}, $\Cs{E}_{\omega}(\bs{x},\bs{\lambda})$ is continuous in $(\bs{x},\bs{\lambda})$ on $\Cs{X} \times \Cs{L}$. By the continuity of $\Cs{E}_{\omega}(\bs{x},\bs{\lambda})$ and the direct application of \cite[Theorem~2]{wets1985continuity}, we have that $G_{\omega}(\bs{x},\bs{\lambda})$ is l.s.c. in $(\bs{x},\bs{\lambda})$ on $\Cs{X} \times \Cs{L}$. Moreover, $\Cs{D}_{\omega}(\bs{x},\bs{\lambda})$ is equivalent to 
\begin{equation*}
\Set*{(\bs{\pi}, \sigma) \in \Cs{E}_{\omega}(\bs{x},\bs{\lambda})}{ G_{\omega}(\bs{x},\bs{\lambda})= -\sigma  \bs{\lambda}^{\top} \bs{g}(\bs{\xi}_{\omega})  + \bs{\pi}^{\top} (\bs{B}_{\omega}\bs{x}+\bs{b}_{\omega}) }.
\end{equation*}
Since the objective function of \eqref{eq: Lagrangian}, $-\sigma  \bs{\lambda}^{\top} \bs{g}(\bs{\xi}_{\omega})  + \bs{\pi}^{\top} (\bs{B}_{\omega}\bs{x}+\bs{b}_{\omega})$, is continuous on $\Cs{E}_{\omega}(\bs{x},\bs{\lambda}) \times \Cs{X} \times \Cs{L}$ and the set-valued function $\Cs{E}_{\omega}(\bs{x},\bs{\lambda})$ is continuous on $\Cs{X} \times \Cs{L}$,
the direct application of \cite[Theorem~1.5]{meyer1970validity} shows the set-valued function $\Cs{D}_{\omega}(\bs{x},\bs{\lambda})$ is u.s.c. in $(\bs{x},\bs{\lambda})$ on $\Cs{X} \times \Cs{L}$. \hfill \Halmos
\endproof

\proof{Proof of Theorem \ref{thm: DD-finite-convergence-nonconvex}}
    To prove the finite convergence of Algorithm \ref{alg: DD-nonconvex-recourse}, we need to show that the ``while" loop terminates in a finite number of iterations, generating an $\epsilon$-optimal solution to \eqref{eq: DRO_Obj}. 

    By contradiction, suppose that the ``while" loop does not terminate in a finite number of iterations. Let $\{(\bs{x}^{t}, \bs{\lambda}^{t}, \bs{\theta}^{t})\}$ be the sequence of iterates generated in Line \ref{line: iterates} and $\{(\bs{\pi}_{\omega}^{t}, \sigma_{\omega}^{t}) \in \Cs{D}_{\omega}(\bs{x}^t,\bs{\lambda}^t)\}$, $\omega \in [N]$, be the sequence of dual multipliers  generated in Line \ref{line: duals}. 
    We show that  $\theta_{\omega}^{t} \ge G_{\omega}(\bs{x}^{t}, \bs{\lambda}^{t}) - \frac{\epsilon}{2}$, $\omega \in [N]$, for all sufficiently large $t$. 
    This implies that we have 
        $\mathrm{LB} - \bs{c}^{\top} \bs{x}^t - (\bs{x}^t)^{\top} \bs{C} \bs{x}^t =  \bs{\lambda}^{t} \bs{\vartheta}(\bs{x}^{t})
		+ \sum_{\omega \in [N] } \theta_{\omega}^{t} \ge \bs{\lambda}^{t} \bs{\vartheta}(\bs{x}^{t}) 
		+ \sum_{\omega \in [N]} G_{\omega}(\bs{x}^{t},\lambda^{t}) - \frac{\epsilon}{2} \ge \mathrm{UB} - \bs{c}^{\top} \bs{x}^t - (\bs{x}^t)^{\top} \bs{C} \bs{x}^t  -\frac{\epsilon}{2}$,
    contradicting that the ``while'' loop does not terminate in a finite number of iterations. 
    Given Assumption {\bf (A4)}, we have $\mathrm{LB} \le z^{t} + \frac{\epsilon}{2}$.  Thus, Algorithm \ref{alg: DD-nonconvex-recourse} returns the $\epsilon$-optimal value $\textrm{UB}$ because $\textrm{UB} \le z^{t} + \epsilon$, with the corresponding $\epsilon$-optimal solution $(\bs{x}^{t}, \bs{\lambda}^{t})$.

    Now, we prove that $\theta_{\omega}^{t} \ge G_{\omega}(\bs{x}^{t}, \bs{\lambda}^{t}) - \frac{\epsilon}{2}$, $\omega \in [N]$, for all sufficiently large $t$. Note that $\{\bs{x}^{t}\}$ and $\{\bs{\lambda}^{t}\}$ are bounded by the compactness of $\Cs{X}$ and $\Cs{L}$ (by Lemma \ref{lem: boundedness_dual}), respectively. Moreover, $\{\bs{\theta}^{t}\}$ is a nondecreasing bounded sequence from below given that $\theta_{\omega}^t$ is an underestimator of $G_{\omega}(\bs{x},\bs{\lambda})$, $\omega \in [N]$, and accumulation of constraints in $\Cs{S}^t$. In addition, $\{\bs{\theta}^{t}\}$ is bounded from above given that  $\theta_{\omega}^t \le G_{\omega}(\bs{x}^t,\bs{\lambda}^t)$, $\omega \in [N]$.  Consequently, there is a convergent subsequence, say $\Cs{K}$, $\{(\bs{x}^{t}, \bs{\lambda}^{t}, \bs{\theta}^{t})\}_{t \in \Cs{K}}$. In addition, the associated sequence of optimal dual multipliers $\{(\bs{\pi}_{\omega}^{t}, \sigma_{\omega}^{t})\}_{t \in \Cs{K}}$, $\omega \in [N]$, is bounded by Lemma \ref{lem: G_properties}.\ref{i}; hence, there is a convergent subsequence on $\Cs{K}^{\prime} \subseteq \Cs{K}$. 
    Let $(\overline{\bs{x}}, \overline{\bs{\lambda}}, \overline{\bs{\theta}}) \in \Cs{X} \times \Cs{L}$ (by closedness) and $(\overline{\bs{\pi}}_{\omega}, \overline{\sigma}_{\omega})$, $\omega \in [N]$,  be a limit point of $\{(\bs{x}^{t}, \bs{\lambda}^{t}, \bs{\theta}^{t})\}$ and $\{(\bs{\pi}_{\omega}^{t}, \sigma_{\omega}^{t})\}$, $\omega \in [N]$, on $\Cs{K}^{\prime}$, respectively. As $\Cs{D}_{\omega}(\bs{x},\bs{\lambda})$, $\omega \in [N]$, is u.s.c. at  
    $(\overline{\bs{x}}, \overline{\bs{\lambda}})$ by Lemma \ref{lem: G_properties}.\ref{D}, we have that $(\overline{\bs{\pi}}_{\omega}, \overline{\sigma}_{\omega}) \in \Cs{D}_{\omega}(\overline{\bs{x}},\overline{\bs{\lambda}})$, $\omega \in [N]$. 
    Let $\{(\bs{y}_{\omega}^{t}, \mu^t_{\omega}, \gamma^t_{\omega}) \in \Cs{F}_{\omega}(\bs{x}^{t},\bs{\lambda}^{t}) \}$, $\omega \in [N]$, be the sequence of associated optimal primal solutions to \eqref{eq: Q_k_nonconvex-recourse}. Given that $\Cs{F}_{\omega}(\bs{x}^{t},\bs{\lambda}^{t})$ is compact by Lemma \ref{lem: G_properties}.\ref{F}, there is a convergent subsequence on $\Cs{K}^{\prime\prime} \subseteq \Cs{K}^{\prime}$ with a limit point $(\overline{\bs{y}}_{\omega}, \overline{\mu}_{\omega}, \overline{\gamma}_{\omega})$, $\omega \in [N]$,   on $\Cs{K}^{\prime\prime}$. 
    
    We now claim that  $\Fs{L}_{\omega}(\overline{\bs{x}},\overline{\bs{\lambda}},\bs{y}_{\omega,q},  \overline{\sigma}_{\omega},\overline{\bs{\pi}}_{\omega}; \overline{\bs{x}})= \Fs{L}_{\omega}(\overline{\bs{x}},\overline{\bs{\lambda}},\overline{\bs{y}}_{\omega},  \overline{\sigma}_{\omega},\overline{\bs{\pi}}_{\omega}; \overline{\bs{x}})$ for every $q \in \Cs{Q}$. 
    Note that for every $(\bs{x}, \bs{\lambda}) \in \Cs{X} \times \Cs{L}$, we have $\lim_{t \rightarrow \infty} \Fs{L}_{\omega}(\bs{x}, \bs{\lambda}, \bs{y}_{\omega,q}, \sigma^{t}_{\omega}, \bs{\pi}^{t}_{\omega}; \bs{x}^t) \linebreak  =- \overline{\sigma}_{\omega}  \bs{\lambda}^{\top} \bs{g}(\bs{\xi}_{\omega})  + \overline{\bs{\pi}}^{\top}_{\omega} (\bs{B}_{\omega}\bs{x}+\bs{b}_{\omega}) + \overline{\sigma}_{\omega}  (\bs{x} - \overline{\bs{x}})^{\top} \bs{L}_{\omega} \bs{y}_{\omega,q} \! = \! \Fs{L}_{\omega}(\bs{x},\bs{\lambda},\bs{y}_{\omega,q},  \overline{\sigma}_{\omega},\overline{\bs{\pi}}_{\omega}; \overline{\bs{x}})$ by the continuity of $\Fs{L}_{\omega}(\bs{x}, \bs{\lambda}, \bs{y}_{\omega,q}, \hat{\sigma}, \hat{\bs{\pi}}; \hat{\bs{x}})$ at  $(\hat{\sigma}, \hat{\bs{\pi}}; \hat{\bs{x}})$. Thus, $\Fs{L}_{\omega}(\overline{\bs{x}},\overline{\bs{\lambda}},\bs{y}_{\omega,q},  \overline{\sigma}_{\omega},\overline{\bs{\pi}}_{\omega}; \overline{\bs{x}})   \linebreak =-\overline{\sigma}_{\omega}  \overline{\bs{\lambda}}^{\top} \bs{g}(\bs{\xi}_{\omega}) + \overline{\bs{\pi}}_{\omega}^{\top} (\bs{B}_{\omega} \overline{\bs{x}}+\bs{b}_{\omega}) = \Fs{L}_{\omega}(\overline{\bs{x}},\overline{\bs{\lambda}},\overline{\bs{y}}_{\omega},  \overline{\sigma}_{\omega},\overline{\bs{\pi}}_{\omega}; \overline{\bs{x}})$ for every $q \in \Cs{Q}$.  

    Given the validity of optimality cut generated at Line \ref{line: cut} for the corresponding set $\conv{T_{\omega}}$, we have $\theta_{\omega} \ge \sigma^{t}_{\omega}   
         - \bs{\lambda}^{\top} \bs{g}(\bs{\xi}_{\omega}) +  (\bs{B}_{\omega}\bs{x}+\bs{b}_{\omega})^{\top} \bs{\pi}^{t}_{\omega}  + \sigma^t_{\omega} (\bs{x} - \bs{x}^{t})^{\top} \bs{L}_{\omega}\bs{y}_{\omega,q}$ for some $q \in \Cs{Q}$. Moreover, given the accumulation of cuts, we have 
         $\theta_{\omega}^{t+1} \ge -\sigma^{t}_{\omega}   
         \bs{g}(\bs{\xi}_{\omega})^{\top} \bs{\lambda}^{t+1}  +  (\bs{B}_{\omega}\bs{x}^{t+1}+\bs{b}_{\omega})^{\top} \bs{\pi}^{t}_{\omega}  + \sigma^t_{\omega}  (\bs{x}^{t+1} - \bs{x}^{t})^{\top} \bs{L}_{\omega}\bs{y}_{\omega,q}$. Thus, taking the limit on $\Cs{K}^{\prime\prime}$, we have 
         $\overline{\theta}_{\omega}
         \ge \Fs{L}_{\omega}(\overline{\bs{x}},\overline{\bs{\lambda}},\bs{y}_{\omega,q},  \overline{\sigma}_{\omega},\overline{\bs{\pi}}_{\omega}; \overline{\bs{x}})$. And, using the above claim yields $\overline{\theta}_{\omega} \ge \Fs{L}_{\omega}(\overline{\bs{x}},\overline{\bs{\lambda}},\overline{\bs{y}}_{\omega},  \overline{\sigma}_{\omega},\overline{\bs{\pi}}_{\omega}; \overline{\bs{x}})$, $\omega \in [N]$. 
         Now, given that $(\overline{\bs{\pi}}_{\omega}, \overline{\sigma}_{\omega}) \in \Cs{D}_{\omega}(\overline{\bs{x}},\overline{\bs{\lambda}})$, $\omega \in [N]$, by strong duality we have $G_{\omega}(\overline{\bs{x}},\overline{\bs{\lambda}})=\Fs{L}_{\omega}(\overline{\bs{x}},\overline{\bs{\lambda}},\overline{\bs{y}}_{\omega},  \overline{\sigma}_{\omega},\overline{\bs{\pi}}_{\omega}; \overline{\bs{x}})$. Hence, $\overline{\theta}_{\omega}  \ge G_{\omega}(\overline{\bs{x}},\overline{\bs{\lambda}})$. Finally, because $G_{\omega}(\bs{x},\bs{\lambda})$ is l.s.c. by Lemma \ref{lem: G_properties}.\ref{G}, we have $\theta_{\omega}^{t} \ge G_{\omega}(\bs{x}^{t}, \bs{\lambda}^{t}) - \frac{\epsilon}{2}$, $\omega \in [N]$, for all sufficiently large $t$. This completes the proof. \hfill \Halmos  
\endproof

\section{Computational Performance Profile for the Multiproduct Newsvendor Problem}
\label{sec: EC_NV}

For each configuration of parameters, we compared the solution times from the two approaches, \texttt{DECOMPOSED} and \texttt{DEF}, to solve \texttt{DD} and \texttt{DI}, on the subset of instances solved within 3600 seconds.
Let $T_I(A)$ denote the computational time in seconds for solving instance $I$ with method
$A \in \{\texttt{DD-DEC}, \texttt{DD-DEF},\texttt{DI-DEC}, \texttt{DI-DEF}\}$, and let $\eta_I(A)$ be a relative metric calculated as:
\begin{equation*}
\eta_I(A)
=
\frac{T_I(A)-\min_A T_I(A)}
{\max_A T_I(A)-\min_A T_I(A)}
\in [0,1].    
\end{equation*}
Any point $(\Omega_T,\beta_T)$ on the computational time performance profile for method $A$ indicates that $\eta_I(A)$ was at most $\Omega_T$ for a fraction $\beta_T$ of the solved instances. Thus, the point $(0,\beta_T)$ implies that $\beta_T$ fraction of instances were solved quickest by $A$.
Fig.~\ref{fig: NV_profile} shows the computational time performance profiles for the newsvendor problem. 

\begin{figure}[!tb]
	\centering
    \includegraphics[width=0.5\linewidth]{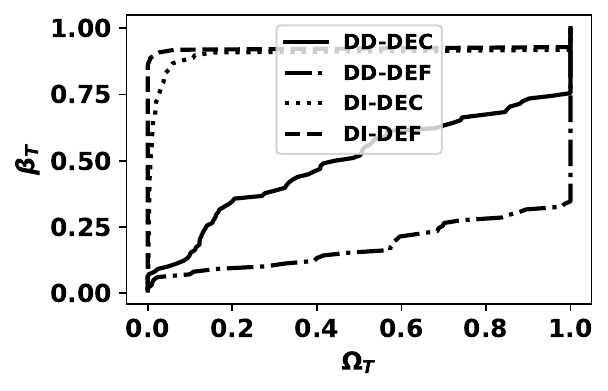}
	\caption{\label{fig: NV_profile} Computational time performance profiles for the newsvendor problem with a price-dependent, $\texttt{DD}$, and decision-independent, $\texttt{DI}$, ambiguity set, with $\gamma=1$, $\varsigma=1$, $\rho_{\mu}=\rho_{\sigma}=1$, and $\tau_\mu=\underline{\tau}_\sigma=0$.}
\end{figure}	

Consistent with Table~\ref{T: NV_Comp_A}, the profiles show that the price-independent models are substantially easier to solve than their price-dependent counterparts. Both \texttt{DI-DEC} and \texttt{DI-DEF} rise sharply near $\Omega_T=0$, indicating that one of the two decision-independent approaches is the fastest method for a large fraction of instances; this agrees with the observation in the main text that both decision-independent formulations were solved within sixty seconds on average. In contrast, the two \texttt{DD} curves are shifted to the right, reflecting the additional computational burden created by price-dependent ambiguity.
Among the two methods for solving the price-dependent model, \texttt{DD-DEC} dominates \texttt{DD-DEF} over most of the profile. This behavior is also reflected in Table~\ref{T: NV_Comp_A}: \texttt{DECOMPOSED} solved all \texttt{DD} instances to optimality within the time limit, whereas \texttt{DEF} terminated with nonzero optimality gaps or failed to solve several larger instances, especially as $n$ and $N$ increased. Hence, while \texttt{DEF} can be competitive on some smaller or less conservative instances, the decomposition algorithm is the more reliable approach for the challenging price-dependent newsvendor instances.

\section{Numerical Results for the Facility Location Problem}

\label{sec: FL}
Consider the facility location problem introduced in Section \ref{sec: FL_app}. Note that $h(\bs{x},\bs{\xi})$ is in the form of \eqref{eq: recourse} with an objective function in the form of \eqref{eq: nonconvex-recourse}, with $\bs{L}=\bs{0}$, yielding a convex recourse function. Hence, an L-shaped-type algorithm is applicable to solve this problem. We assume $g_j > r_j > c_{ij}$ for $i \in [n]$ and $j \in [m]$. 
Also, in light of Lemma \ref{lem: boundedness_dual}, we have 
$\Cs{L}=\{\bs{\lambda} : 0 \le \lambda_\iota \le \overline{g}\overline{\xi}, \; \iota \in [s] \}$, where 
$\overline{g}=\max_{ j \in [m]} g_j$ and $\overline{\xi}=\max_{ j \in [m], \omega \in [N]} \xi_j^{\omega}$. 

\subsection{Experiment Design}

To conduct experiments, we generated spatial instances using clustered and random schemes. Given $n$ potential facility locations and $m$ customers, we set the  number of clusters to $K:=\min\{m,\max\{2,\min(\lfloor m/10 \rfloor,\lfloor \lceil n/2 \rceil/3 \rfloor)\}\}$. We then sampled the $K$ cluster centers uniformly from $[15,85]^2$ subject to pairwise Euclidean distance at least $35$. The $m$ customers were split across clusters as evenly as possible, so each cluster contains either $\lfloor m/K \rfloor$ or $\lceil m/K \rceil$ customers, and their coordinates were obtained by adding Gaussian noise with standard deviation $5$ to the corresponding cluster center. Moreover, one candidate facility location was placed near each cluster center using Gaussian noise with standard deviation $3$, and the remaining $n-K$ candidate locations were sampled uniformly from $[0,100]^2$. All coordinates were clipped to $[0,100]^2$. In the random scheme, all coordinates were chosen in $[0,100]^2$.
Moreover, we chose $C_i \sim \text{Uniform}(10,20)$, $i \in [n]$. For  $j \in [m]$, we set $g_j=225$ and $r_j=140$.

We let $c_{ij}$ be the Euclidean distance between location $i \in [n]$ and customer $j \in [m]$. 
For each location $i \in [n]$, we computed its average distance to all customers and normalized these averages across $i \in [n]$ to obtain $\delta_i \in [0,1]$. We then defined a location-specific attractiveness factor $a_i:=1+2\delta_i$. For each customer $j \in [m]$, we formed $w_{ij}:=a_i\exp(-c_{ij}/\varsigma)$, $i\in[n]$, normalized these weights over $i \in [n]$ to obtain $\tilde{w}^{\mu}_{ij}$. We set $u^\mu_{ij}=\rho_{\mu} \tilde{w}^{\mu}_{ij}$.
Next, we calculated $\tilde{w}^\sigma_{ij}=\tilde{w}^\mu_{ij}-\frac{1}{2}\min_{k \in [n]} \tilde{w}^\mu_{kj}$, $i\in[n]$. We then set $u^\sigma_{ij}=\rho_{\sigma} \tilde{w}^{\sigma}_{ij}$. Parameters $\varsigma$, $\rho_{\mu}$, and $\rho_{\sigma}$ are tunable and control the {\it degree} of decision-dependency, where $\rho_{\sigma} \le 1$ so that $\sum_{i \in [n]}u^\sigma_{ij}<1$. 
A higher $\rho_{\mu}$ and $\rho_{\sigma}$ imply a stronger decision-dependency, with a greater demand mean and a smaller demand variance, respectively. 

By construction, the distance-decay term $\exp(-c_{ij}/\varsigma)$ makes nearby facilities both cheaper to provide service from and more likely to attract demand. If this were the only driver of demand, then the decision-dependent and decision-independent models would tend to favor the same locations near dense customer clusters, leaving little contrast between them. The attractiveness factor $a_i$ is therefore introduced to create tension between transportation efficiency and demand inducement: locations that are relatively farther from customers can still generate higher demand and revenue because of their intrinsic attractiveness. This separation makes the value of modeling decision dependence more visible, since the decision-dependent model can exploit attraction-driven demand shifts, whereas the decision-independent model continues to respond only to transportation cost and capacity considerations.

For each training set, we chose the vector of nominal mean of the random demand $\bs{\xi}$, $\overline{\bs{\mu}}$, independently from  $\text{Uniform}(20,40)$ and set $\overline{\bs{\sigma}}=\gamma \overline{\bs{\mu}}$, where $\gamma$ is a tunable parameter to control the coefficient of variation.
Realizations of the random demand $\xi_j$, $j \in [m]$, were generated independently from a folded normal distribution with mean $\overline{\mu}_j$ and standard deviation $\overline{\sigma}_j$.  

\subsection{Computational Results}

In this section, we compare the computational performance of the L-shaped-type algorithm, denoted as \texttt{DECOMPOSED}, against the MINLP deterministic equivalent formulation, presented in \eqref{eq: reform-simple-moment-recourse-linearized}, which is solved using an off-the-shelf nonconvex solver and denoted as \texttt{DEF}.
To do so, we set a time limit of 14400 seconds and followed the random scheme to generate coordinates.  
We present the computational results for $n \in \{10,15,20\}$, $m=2n$, $d=\lceil n/2 \rceil$, hyperparameters $\tau_\mu=0.4$, $\overline{\tau}_\sigma=1.4$, and $\underline{\tau}_\sigma=0.6$ for the ambiguity set, $\varsigma \in \{10,20\}$ and $\rho_{\mu}=\rho_{\sigma} =1$ for the degree of decision dependency, $\gamma=1$ for the coefficient of variation, and $N \in \{100, 200,300,400,500\}$. 

\begin{table}
\centering
\scriptsize
\caption{Comparison of \texttt{DEF} and \texttt{DECOMPOSED} to solve the facility location problem with a decision-dependent, $\texttt{DD}$, and decision-independent, $\texttt{DI}$, ambiguity set, with $\gamma=1$, $\rho_{\mu}=\rho_{\sigma}=1$, $\tau_\mu=0.4$, $\underline{\tau}_\sigma=0.6$, and $\overline{\tau}_\sigma=1.4$.}
\label{T: FL_Comp_A}
 \begin{threeparttable}
\begin{tabular}{lllllllllll}
\toprule
 &  & & \multicolumn{4}{c}{$\texttt{DD}$} & \multicolumn{4}{c}{$\texttt{DI}$} \\
 \cmidrule(lr){4-7} \cmidrule(lr){8-11} 
  & & & \multicolumn{2}{c}{\texttt{DECOMPOSED}} & \multicolumn{2}{c}{\texttt{DEF}} & \multicolumn{2}{c} {\texttt{DECOMPOSED}} & \multicolumn{2}{c}{\texttt{DEF}} \\
 \cmidrule(lr){4-5} \cmidrule(lr){6-7}  \cmidrule(lr){8-9} \cmidrule(lr){10-11} 
$(n,m,d)$ & $\varsigma$ & $N$ &  Gap (\%) & Time (s) & Gap (\%) & Time (s) & Gap (\%) & Time (s) & Gap (\%) & Time (s) \\
\midrule
\multirow[t]{10}{*}{(10, 20, 5)} & \multirow[t]{5}{*}{10} & 100 & 0.0 & 47.44 & 0.0 & 24.36 & 0.0 & 14.81 & 0.0 & 0.68 \\
 &  & 200 & 0.0 & 99.4 & 0.0 & 56.59 & 0.0 & 37.17 & 0.0 & 1.84 \\
 &  & 300 & 0.0 & 156.74 & 0.0 & 94.69 & 0.0 & 56.65 & 0.0 & 2.76 \\
 &  & 400 & 0.0 & 209.91 & 0.0 & 141.99 & 0.0 & 63.53 & 0.0 & 4.14 \\
 &  & 500 & 0.0 & 258.53 & 0.0 & 200.79 & 0.0 & 77.85 & 0.0 & 5.54 \\
\cline{2-11}
 & \multirow[t]{5}{*}{20} & 100 & 0.0 & 49.38 & 0.0 & 24.28 & 0.0 & 14.59 & 0.0 & 0.69 \\
 &  & 200 & 0.0 & 95.56 & 0.0 & 52.15 & 0.0 & 37.5 & 0.0 & 1.75 \\
 &  & 300 & 0.0 & 159.84 & 0.0 & 92.0 & 0.0 & 56.43 & 0.0 & 2.77 \\
 &  & 400 & 0.0 & 219.28 & 0.0 & 132.25 & 0.0 & 65.69 & 0.0 & 4.22 \\
 &  & 500 & 0.0 & 245.13 & 0.0 & 193.16 & 0.0 & 77.25 & 0.0 & 5.5 \\
\midrule
\multirow[t]{10}{*}{(15, 30, 8)} & \multirow[t]{5}{*}{10} & 100 & 0.0 & 233.27 & 0.0 & 139.31 & 0.0 & 64.78 & 0.0 & 2.63 \\
 &  & 200 & 0.0 & 592.17 & 0.0 & 321.69 & 0.0 & 101.94 & 0.0 & 6.47 \\
 &  & 300 & 0.0 & 781.49 & 0.0 & 616.63 & 0.0 & 141.58 & 0.0 & 12.21 \\
 &  & 400 & 0.0 & 1176.99 & 0.0 & 773.71 & 0.0 & 176.01 & 0.0 & 17.98 \\
 &  & 500 & 0.0 & 1338.63 & 0.0 & 1059.04 & 0.0 & 220.73 & 0.0 & 24.55 \\
\cline{2-11}
 & \multirow[t]{5}{*}{20} & 100 & 0.0 & 276.04 & 0.0 & 154.06 & 0.0 & 65.42 & 0.0 & 2.63 \\
 &  & 200 & 0.0 & 531.14 & 0.0 & 435.59 & 0.0 & 102.4 & 0.0 & 6.57 \\
 &  & 300 & 0.0 & 801.53 & 0.0 & 762.77 & 0.0 & 143.0 & 0.0 & 12.26 \\
 &  & 400 & 0.0 & 1018.04 & 0.0 & 987.82 & 0.0 & 176.69 & 0.0 & 18.06 \\
 &  & 500 & 0.0 & 1046.15 & 0.0 & 1436.55 & 0.0 & 224.26 & 0.0 & 25.23 \\
\midrule
\multirow[t]{10}{*}{(20, 40, 10)} & \multirow[t]{5}{*}{10} & 100 & 0.0 & 1451.64 & 0.0 & 3843.03 & 0.01 & 122.41 & 0.0 & 5.45 \\
 &  & 200 & 0.0 & 2512.09 & 0.0 (7.9) & 9883.27 (1) & 0.0 & 305.16 & 0.0 & 19.54 \\
 &  & 300 & 0.0 & 3178.64 & 0.0 (11.93) & 12379.35 (3) & 0.0 & 484.21 & 0.0 & 34.84 \\
 &  & 400 & 0.0 & 5127.42 & 0.0 (10.84) & 13083.85 (4) & 0.0 & 607.55 & 0.0 & 51.39 \\
 &  & 500 & 0.0 & 7540.46 & - (12.47) & - (5) & 0.0 & 764.98 & 0.0 & 69.27 \\
\cline{2-11}
 & \multirow[t]{5}{*}{20} & 100 & 0.0 & 1549.21 & 0.0 & 7861.81 & 0.01 & 121.52 & 0.0 & 5.54 \\
 &  & 200 & 0.0 & 2700.16 & 0.0 (7.51) & 11405.7 (2) & 0.0 & 309.01 & 0.0 & 19.17 \\
 &  & 300 & 0.0 & 4017.26 & - (13.82) & - (5) & 0.0 & 487.19 & 0.0 & 34.33 \\
 &  & 400 & 0.0 & 4979.49 & - (14.93) & - (5) & 0.0 & 603.5 & 0.0 & 51.14 \\
 &  & 500 & 0.0 & 7180.68 & - (16.56) & - (5) & 0.0 & 767.91 & 0.0 & 68.27 \\
\bottomrule
\end{tabular}

 \end{threeparttable}
\end{table}

Table \ref{T: FL_Comp_A} reports the average computational time (over five training sets) for both DROs with decision-dependent, $\texttt{DD}$, and decision-independent ambiguity set, $\texttt{DI}$.  
For a DRO with a location-dependent ambiguity set, observe from Table~\ref{T: FL_Comp_A} that $\texttt{DECOMPOSED}$ found an optimal solution within the time limit for all instances, whereas $\texttt{DEF}$ could not find an optimal solution for many instances within the time limit. These instances are typically larger problems in terms of the number of scenarios, $N$, the number of potential locations, $n$, and the number of installed facilities, $d$. 
In addition, for instances that could be solved optimally within the time limit with both approaches, $\texttt{DEF}$  often found an optimal solution with less computational effort. An increase in the number of scenarios, $N$, the number of potential locations, $n$, and the number of installed facilities, $d$, led to longer runtimes for both approaches. 

On the other hand, a DRO with a location-independent ambiguity set is typically an easier problem to solve (Table~\ref{T: FL_Comp_A}). In particular,  $\texttt{DECOMPOSED}$ to solve \texttt{DI} was approximately 2 to 12 times faster than using $\texttt{DECOMPOSED}$ to solve \texttt{DD}. 
This observation further highlights the increased computational burden associated with solving the location-dependent DRO problem compared to its location-independent counterpart.

Using the same relative time metric in Section~\ref{sec: EC_NV}, Fig.~\ref{fig: FL_profile} reports the computational time performance profiles for the facility location problem over instances considered in Table~\ref{T: FL_Comp_A}, with a time limit of 14400 seconds.

\begin{figure}[!tb]
	\centering
    \includegraphics[width=0.5\linewidth]{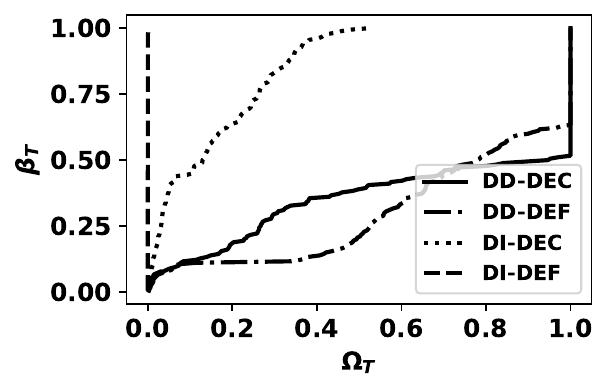}
	\caption{\label{fig: FL_profile} Computational time performance profiles for the facility location problem with a decision-dependent, $\texttt{DD}$, and decision-independent, $\texttt{DI}$, ambiguity set, with $\gamma=1$, $\rho_{\mu}=\rho_{\sigma}=1$, $\tau_\mu=0.4$, $\underline{\tau}_\sigma=0.6$, and $\overline{\tau}_\sigma=1.4$.}
\end{figure}

The profile reinforces the computational trends reported in Table~\ref{T: FL_Comp_A}. The \texttt{DI-DEF} curve rises almost immediately to one, showing that \texttt{DEF} is typically the fastest approach for \texttt{DI} instances. The \texttt{DI-DEC} curve also lies well above the corresponding decision-dependent curves for all values of $\Omega_T$, which is consistent with the observation that solving \texttt{DI} with \texttt{DECOMPOSED} is approximately 1.2 to 2.5 times faster than solving \texttt{DD} with \texttt{DECOMPOSED}. 
For \texttt{DD}, \texttt{DD-DEF} often performs favorably relative to \texttt{DD-DEC} on instances that it solves, as indicated by its steeper initial rise and by the smaller runtimes reported in Table~\ref{T: FL_Comp_A} for many small and medium instances. However, Table~\ref{T: FL_Comp_A} also shows that \texttt{DD-DEF} fails on several larger instances, whereas \texttt{DD-DEC} solves all instances within the time limit. Therefore, the \texttt{DEF} can be faster when it remains tractable, but \texttt{DECOMPOSED} provides the more dependable approach for larger \texttt{DD} instances.

\subsection{Out-of-Sample Performance} 
We also report the out-of-sample performance of solutions resulting from a DRO with decision-dependent, $\texttt{DD}$, and decision-independent ambiguity set, $\texttt{DI}$, for a problem instance with $n=10$, $m=20$, and $d=5$ on a clustered scheme, with $N=100$, 
$\varsigma=10$, $\rho_{\mu}, \rho_{\sigma} \in \{0.3,0.5,0.7\}$ for the degree of decision dependency, and $\gamma \in \{0.5,0.7\}$ 
for the coefficient of variation. 
To form the ambiguity sets, we used parameters $\tau_\mu$ and $\tau_\sigma$, where $\overline{\tau}_\sigma=1+\tau_\sigma$, and $\underline{\tau}_\sigma=1-\tau_\sigma$, with $\tau_\mu, \tau_\sigma \in \{0, 0.2\}$. 
Fig.~\ref{fig: FL_0.5_100} reports a two-sided 95\% confidence interval of these UCBs over 25 microsimulations, for $\gamma=0.5$ and $N=100$. 
These figures were generated very similarly to Fig.~\ref{fig: NV_oos_0.5_100}. 
Observe that the out-of-sample cost difference $(\texttt{DI} - \texttt{DD})$ is positive, indicating that $\texttt{DD}$ yielded a smaller out-of-sample performance than its \texttt{DI} counterpart. 
This pattern is consistent with the facility-location model: opening a facility changes both the demand it attracts and, through the ambiguity set, the way mean and dispersion are hedged.

\paragraph{Sensitivity with respect to $\rho_{\mu}$ and $\rho_{\sigma}$.}
Observe that at a fixed $\rho_{\sigma}$, the curves are typically upward sloping in $\rho_{\mu}$, especially when the coefficient of variation, $\gamma$, is moderate. This indicates that a larger $\rho_{\mu}$ typically leads to a larger positive gap $(\texttt{DI}-\texttt{DD})$. The reason is that a larger $\rho_{\mu}$ amplifies how facility openings relocate demand across customers, so \texttt{DI} model becomes increasingly misspecified on the mean side, whereas \texttt{DD} can adjust facility openings and capacity allocation to the demand pattern it endogenously induces.
For a fixed $\rho_{\mu}$, the ordering of the curves is always increasing in $\rho_{\sigma}$, so higher values of $\rho_{\sigma}$ leads to a larger positive gap $(\texttt{DI}-\texttt{DD})$. This suggests that accounting for location-specific dispersion can further improve the robustness of the \texttt{DD} solution. However, this $\rho_{\sigma}$ effect is more nuanced than the $\rho_{\mu}$ effect. Especially, when $\gamma$ is higher, some of the upper curves, corresponding to higher $\rho_{\sigma}$,  flatten or even decline as $\rho_{\mu}$ increases, indicating that once variance uncertainty and tail protection become dominant, both models are pushed toward similarly conservative facility configurations and the marginal value of modeling decision-dependent variance is reduced.

\paragraph{Sensitivity with respect to $\tau_\mu$ and $\tau_\sigma$.} 
Recall that $\tau_\mu$ governs the tolerance on the mean, whereas $\tau_\sigma$ governs the admissible band on the variance. A larger $\tau_\mu$ primarily enlarges the ambiguity set along directions that increase the {\it mean} of demand, and therefore magnifies the consequences of misspecifying decision-induced demand relocation. When those mean shifts are important, the slope of $(\texttt{DI}-\texttt{DD})$ with respect to $\rho_{\mu}$ becomes steeper, reflecting the fact that \texttt{DD} is better able to align facility openings with the demand pattern it endogenously creates.
By contrast, a larger $\tau_\sigma$ widens the \emph{spread} of demand and places more emphasis on hedging dispersion and tail outcomes. This tends to move both \texttt{DD} and \texttt{DI} toward more conservative solutions, which can compress the gap $(\texttt{DI}-\texttt{DD})$ because the problem becomes driven more by generic variance protection than by exploiting decision-dependent structure. This weakening effect is most visible in the higher $\gamma$ panels, where the curves are flatter and occasionally decrease.

Overall, the sensitivity analysis indicates that the value of incorporating decision dependence is largest when the ambiguity set is informative enough for location-specific mean and variance effects to matter, but not so wide that both models collapse to nearly the same conservative hedge.

\begin{figure}[!tb]
	\centering
    \includegraphics[width=0.5\linewidth]{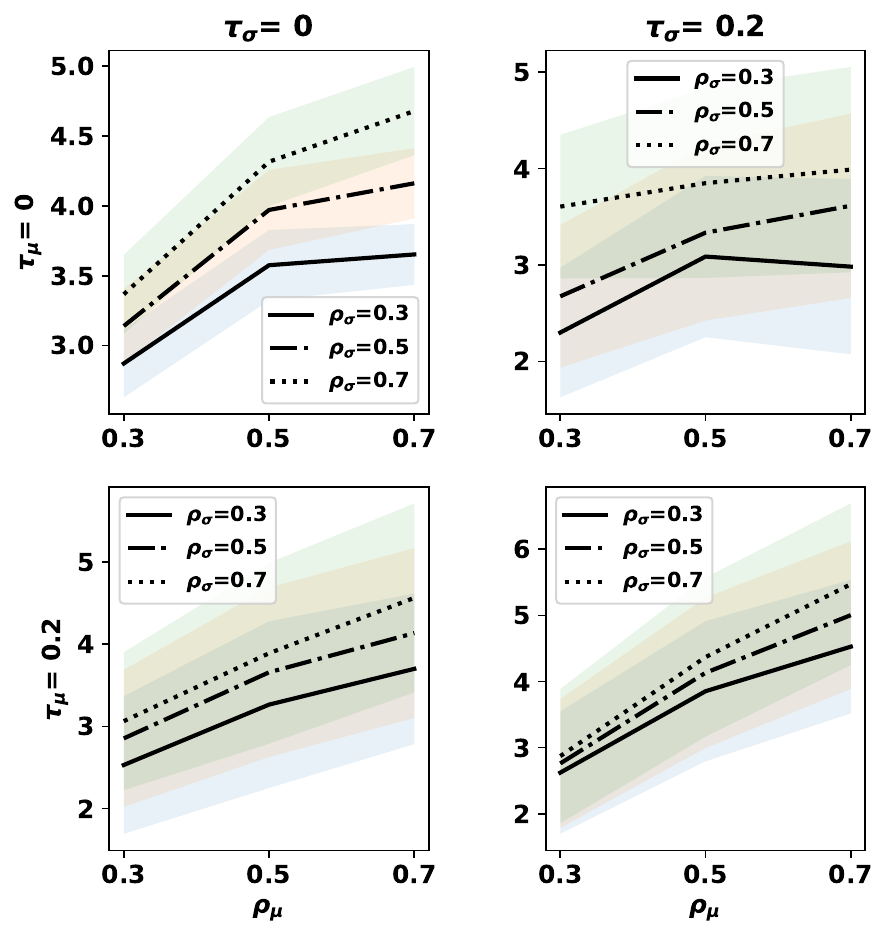}
	\caption{\label{fig: FL_0.5_100} 95\% UCB on the mean out-of-sample difference $(\texttt{DI} - \texttt{DD})$ for the  facility location problem with $n=10$, $m=20$, $d=5$, $\varsigma=10$, $\gamma=0.5$, and $N=100$.}
\end{figure}


\begin{figure}[!tb]
	\centering
    \includegraphics[width=0.5\linewidth]{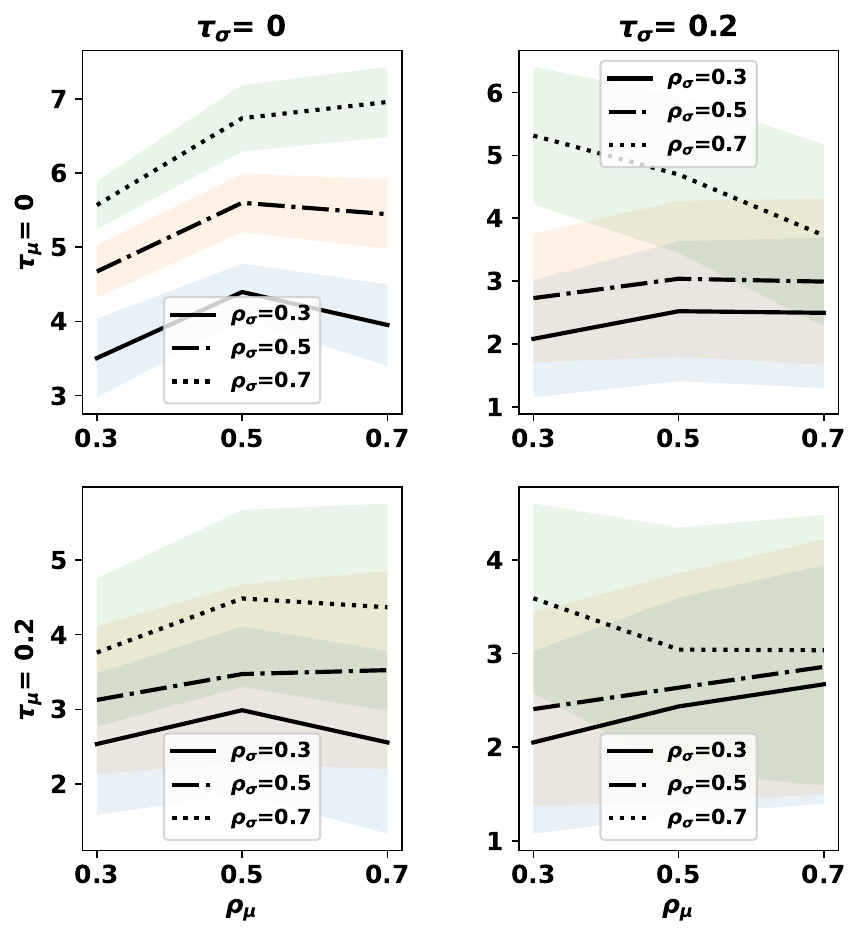}
	\caption{\label{fig: FL_0.7_100} 95\% UCB on the mean out-of-sample difference $(\texttt{DI} - \texttt{DD})$ for the  facility location problem with $n=10$, $m=20$, $d=5$, $\varsigma=10$, $\gamma=0.7$, and $N=100$.}
\end{figure}

\end{document}